\newtheorem{theorem}{Theorem}[section]
\newtheorem{lemma}[theorem]{Lemma}
\newtheorem{proposition}[theorem]{Proposition}
\newtheorem{corollary}[theorem]{Corollary}
\theoremstyle{definition}
\newtheorem{definition}[theorem]{Definition}
\newtheorem{remark}[theorem]{Remark}
\newtheorem{note}[theorem]{Note}
\newcommand{\C}{\mathcal{C}}
\newcommand{\D}{\mathcal{D}}
\newcommand{\A}{\mathcal{A}}
\newcommand{\B}{\mathcal{B}}
\newcommand{\E}{\mathcal{E}}
\newcommand{\Z}{\mathcal{Z}}
\renewcommand{\S}{\mathcal{S}}
\newcommand{\id}{\text{id}}
\newcommand{\ot}{\otimes}
\newcommand{\be}{\mathbf{1}}
\newcommand{\lexp}[2]{{\vphantom{#2}}^{#1}{#2}}
\DeclareMathOperator{\Ind}{Ind}
\DeclareMathOperator{\Hom}{Hom}
\DeclareMathOperator{\Rep}{Rep}
\DeclareMathOperator{\Ker}{Ker }
\DeclareMathOperator{\op}{op} 
\DeclareMathOperator{\Irr}{Irr}
\renewcommand{\Vec}{\operatorname{Vec}}
\renewcommand{\dim}{\operatorname{dim }}
\renewcommand{\deg}{\operatorname{deg }}
\begin{document}

\title{Fusion subcategories of representation categories
of twisted quantum doubles of finite groups}

\author{Deepak Naidu}
\address{Department of Mathematics, Texas A\&M University, College Station,
TX 77843, USA}
\email{dnaidu@math.tamu.edu}

\author{Dmitri Nikshych}
\address{Department of Mathematics and Statistics, University of New Hampshire,
Durham, NH 03824, USA}
\email{nikshych@math.unh.edu}

\author{Sarah Witherspoon}
\address{Department of Mathematics, Texas A\&M University, College Station,
TX 77843, USA}
\email{sjw@math.tamu.edu}

\date{\today}

\begin{abstract}
We describe all fusion subcategories of the representation category $\Rep(D^{\omega}(G))$
of a twisted quantum double 
$D^\omega(G)$, where $G$ is a finite group and $\omega$ 
is a $3$-cocycle on $G$. 
In view of the fact that every
group-theoretical braided fusion category can be embedded
into some $\Rep(D^{\omega}(G))$, this
gives a complete description of all group-theoretical 
braided fusion categories. We describe the lattice and give formulas
for some invariants of the fusion subcategories of $\Rep(D^\omega(G))$. 
We also give a characterization of group-theoretical braided fusion
categories as equivariantizations of pointed categories.
\end{abstract}

\maketitle

\begin{section}
{Introduction}

Let $G$ be a finite group and $\omega$ be a $3$-cocycle on $G$.
In \cite{DPR1, DPR2} Dijkgraaf, 
Pasquier, and Roche introduced a quasi-triangular quasi-Hopf algebra 
$D^\omega(G)$. When $\omega =1$ this quasi-Hopf algebra coincides with 
the Drinfeld double $D(G)$ of $G$ and so $D^\omega(G)$ is often called
a {\em twisted quantum double} of $G$. It is well known that the 
representation category $\Rep(D^\omega(G))$ of $D^\omega(G)$ 
is a modular category  \cite{BK} 
and is braided equivalent to the center $\Z(\Vec_G^\omega)$ 
of the tensor category $\Vec_G^\omega$ of finite-dimensional 
$G$-graded vector spaces with associativity constraint
defined using $\omega$ \cite{Mj}.

The principal goal of this paper is to give a
complete description of fusion subcategories of
$\Rep(D^\omega(G))$ and the lattice formed by them.
Our description of the lattice may shed more light on
the structure of $\Rep(D^\omega(G))$. For instance, the group of braided autoequivalences
of $\Rep(D^\omega(G))$ acts on this lattice, and so one can, in
principle, derive some information about this group from our description.

Our results also have consequences more generally for a
{\em group-theoretical} braided fusion category, that is a braided category  
$\C$ {\em dual} to $\Vec_G^\omega$ for some $G$ and $\omega$ 
(see Section \ref{fusion cats.} or \cite{O2}). 
Since $\C$ is braided, it can be realized as a fusion subcategory of
$\Rep(D^\omega(G)) \cong \Z(\Vec_G^\omega)$. 
This means that our description of all  fusion 
subcategories of $\Rep(D^\omega(G))$, for
all finite groups $G$ and $3$-cocycles $\omega$, is equivalent to a 
description of all  group-theoretical braided fusion categories. 
In particular, our results give a description of all {\em modular}
\cite{BK} group-theoretical fusion categories.

Group-theoretical fusion categories are of interest as examples 
of fusion categories that can be described explicitly in terms of
finite groups and their cohomology \cite{O1}.
They are also more general than one might at first expect:
All semisimple quasi-Hopf algebras of prime power dimension have
group-theoretical representation categories, a consequence of
a more general result on nilpotent fusion categories \cite{DGNO}.
For some time it was unknown whether there are any complex semisimple
Hopf algebras having non-group-theoretical representation categories;
the first known example was announced recently by the second author \cite{Nk}.

\subsection{Main results}

Let $G$ be a finite group,
and $k$ an algebraically closed
field  of characteristic zero. Unless otherwise stated
all cocycles appearing in this 
work will have coefficients in the trivial module $k^\times$.
All categories
are assumed to be $k$-linear and semisimple with finite-dimensional
$\Hom$-spaces and finitely many isomorphism classes of simple objects.
All functors are assumed to be additive and $k$-linear.

\begin{theorem}
\label{thm1}
Fusion subcategories of the representation category
of the Drinfeld double $D(G)$ are in bijection with triples $(K,H,B)$, where $K,H$ are
normal subgroups of $G$ centralizing each other and $B: K\times H\to k^\times$
is a $G$-invariant bicharacter.
\end{theorem}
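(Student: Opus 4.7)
The plan is to proceed in two directions: construct an assignment $(K,H,B)\mapsto \C(K,H,B)$ from triples to fusion subcategories, and an inverse assignment $\D\mapsto (K(\D),H(\D),B(\D))$, and then check they are mutually inverse. I will use the standard description of simple objects of $\Rep(D(G))$ as pairs $(a,\rho)$, where $a$ runs over representatives of conjugacy classes of $G$ and $\rho\in\Irr(C_G(a))$, and the known formula for the tensor product of two such simples.

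Forward direction. Given $(K,H,B)$ as in the theorem, I would define $\C(K,H,B)$ to be the full additive subcategory generated by those simples $(a,\rho)$ for which $a\in K$ and $\rho|_H$ acts by the scalar character $h\mapsto B(a,h)$. The hypothesis that $H$ centralizes $K$ guarantees $H\subseteq C_G(a)$, so the restriction makes sense; the $G$-invariance of $B$ is exactly what makes the condition invariant under the residual conjugation freedom in the pair $(a,\rho)$. The unit object $(1,\mathbf{1})$ lies in $\C(K,H,B)$ since $B(1,h)=1$, and duality is immediate since $(a,\rho)^*=(a^{-1},\rho^*)$ and $B(a^{-1},h)=B(a,h)^{-1}$. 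The nontrivial check is closure under $\otimes$: every simple summand $(c,\tau)$ of $(a,\rho)\otimes (b,\sigma)$ has the form $c=ga g^{-1}b$ for some $g\in G$ with $gag^{-1}$ and $b$ commuting, and $\tau$ appears in the restriction of $\rho^g\otimes\sigma$. Normality of $K$ gives $c\in K$; using $G$-invariance of $B$ to compute $\rho^g|_H$ and then bicharacter multiplicativity in the first variable gives $(\rho^g\otimes\sigma)|_H=B(c,-)$, hence $\tau|_H=B(c,-)$ as required.

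Reverse direction. Given a fusion subcategory $\D$, set $K(\D)=\{a\in G\mid (a,\rho)\in\D \text{ for some }\rho\}$, which is a union of conjugacy classes, and closed under products and inverses by tensor/dual closure, so $K$ is a normal subgroup of $G$. Define $H(\D)$ to be the set of $h\in G$ that centralize every $a\in K$ and act by a scalar on every simple $(a,\rho)\in\D$; normality of $H$ follows from $G$-invariance of the simples, and $H$ centralizes $K$ by construction. Define $B(a,h)$ to be the resulting scalar. Multiplicativity of $B$ in $h$ is immediate; multiplicativity of $B$ in $a$ comes from the tensor product computation: if $(c,\tau)$ is any simple summand of $(a,\rho)\otimes(b,\sigma)$ with $a,b,c\in K$ and $c=gag^{-1}b$, evaluating the $H$-action on both sides gives $B(c,h)=B(a,h)B(b,h)$ (since $H$ is normal and $B(gag^{-1},h)=B(a,h)$ by $G$-invariance evaluated with the element $g$). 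Inclusion $\D\subseteq \C(K,H,B)$ is then automatic.

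The main obstacle is the reverse inclusion $\C(K,H,B)\subseteq\D$: one must show that $\D$ contains \emph{every} simple $(a,\rho)$ with $a\in K$ and $\rho|_H=B(a,-)$, not just those that witnessed the definitions. I would approach this by analyzing how $\D$ is generated by its invertible objects together with its ``support'' simples. Specifically, the invertible objects in $\C(K,H,B)$ form the pointed subcategory $\Vec_{(Z(G)\cap K)\times\widehat{G/H}}$ (twisted by some cocycle), and tensoring a given $(a,\rho_0)\in\D$ by these generates precisely the orbit $\{(a,\rho_0\otimes(\chi|_{C_G(a)}))\mid \chi\in\widehat{G/H}\}$, which by Frobenius reciprocity is exactly the set of $(a,\rho)$ with $\rho|_H=B(a,-)$. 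The delicate part is ensuring that $\D$ contains sufficiently many invertible objects and sufficiently many supports to sweep out all such orbits; this should follow by a counting/dimension argument once one knows that $\D$ and $\C(K(\D),H(\D),B(\D))$ contain the same total number of simple objects, which can be verified directly from the constructions. Establishing this saturation is where the real content of the theorem lies.
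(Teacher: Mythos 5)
Your forward direction (closure of the candidate subcategory under tensor product via the fusion rules of $D(G)$, using normality of $K$, $G$-invariance and bimultiplicativity of $B$) is workable and is a genuinely different route from the paper, which never checks tensor closure directly: the paper instead shows that $\S(K,H,B)$ and $\S(H,K,(B^{\op})^{-1})$ centralize each other and have dimensions multiplying to $\dim\C$, which simultaneously proves they are fusion subcategories and identifies $\S(K,H,B)'$. The reverse direction of your proposal, however, has two genuine problems. First, your definition of $H(\D)$ is ill-posed: take $\D=\Rep(D(G))$ itself with $Z(G)\neq\{e\}$. Every $h\in Z(G)$ lies in each $C_G(a)$ and acts on every simple $(a,\rho)$ by a scalar (Schur's lemma), so your $H(\D)$ would be $Z(G)$, whereas the triple attached to $\D$ must be $(G,\{e\},1)$; moreover the ``resulting scalar'' $\rho(h)/\deg\rho$ depends on $\rho$ and not only on $a$, so your $B$ is not even well defined. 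The correct definition is the one forced by $\D\cap\Rep(G)=\Rep(G/H_\D)$, and the independence of $B_\D(a,h)=\chi(h)/\deg\chi$ from the choice of $\chi$ is not formal: in the paper it comes from the braiding, via the identification $H_\D=K_{\D'}$ and the centralizing relation with objects of $\D'$.

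Second, the saturation step $\S(K,H,B)\subseteq\D$, which you correctly identify as the heart of the theorem, is not established, and the mechanism you propose fails. The simples $(a,\rho)$ with $\rho|_H=B(a,\cdot)\deg\rho$ do not form a single orbit of one $(a,\rho_0)$ under tensoring with invertible objects: invertibles preserve categorical dimension, while by Clifford theory (the paper's Lemma giving $\sum_{\chi:\chi|_H=(\deg\chi)B(a,\cdot)}(\deg\chi)^2=[C_G(a):H]$) these $\rho$ generally have different degrees; already for $H=\{e\}$, $B=1$, $a=e$ your claim would say that all irreducibles of a nonabelian $G$ differ by linear characters. Likewise, ``$\D$ and $\C(K(\D),H(\D),B(\D))$ have the same number of simple objects, verified directly from the constructions'' is exactly what cannot be verified without further input. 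The paper supplies that input from the modular structure: its Lemma 3.6 uses M\"uger's double centralizer theorem $\D''=\D$ to show that $\D$ contains every $(a,\chi')$ whose normalized restriction to $K_{\D'}$ agrees with that of some $(a,\chi)\in\D$, which yields $\S(K_\D,H_\D,B_\D)\subseteq\D$, and equality then follows from $\dim\D\cdot\dim\D'=\dim\C$ together with $\dim\S(K,H,B)=|K|\,[G:H]$. Without this centralizer machinery, or some substitute argument of comparable strength, your reverse direction remains open.
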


Theorem~\ref{thm1} gives a simple description of all fusion
subcategories for the untwisted double $D(G)$.  
Now let $\omega \in Z^3(G,\, k^\times)$ be a $3$-cocycle on $G$ .
In the twisted
($\omega \neq 1$) case the notion of a $G$-invariant
bicharacter must be twisted (see Definition~\ref{inv-bichar}).

\begin{theorem}
\label{thm2}
Fusion subcategories of the representation category
of the twisted double $D^\omega(G)$ are in bijection with triples $(K,H,B)$ 
where $K,H$ are normal subgroups
of $G$ centralizing each other and $B: K\times H \rightarrow k^{\times}$
is a $G$-invariant $\omega$-bicharacter.
\end{theorem}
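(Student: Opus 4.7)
The approach is to adapt the bijection of Theorem~\ref{thm1} by tracking the cocycle $\omega$ through every step. Recall the standard parametrization of the simple objects of $\Rep(D^\omega(G)) \cong \Z(\Vec_G^\omega)$: they are pairs $(g,\pi)$ where $g$ runs over a set of representatives of conjugacy classes of $G$ and $\pi$ is an irreducible $\omega_g$-projective representation of the centralizer $C_G(g)$, where $\omega_g \in Z^2(C_G(g),k^\times)$ is the $2$-cocycle built from $\omega$ by the usual "transgression" formula. Following the untwisted case, the goal is to associate to a triple $(K,H,B)$ the full subcategory $\mathcal{S}(K,H,B)$ spanned by those simples $(g,\pi)$ with $g\in K$ and whose restriction $\pi|_H$ is isotypic of the one-dimensional $\omega_g|_H$-projective character $h \mapsto B(g,h)$ (this only makes sense because $H$ centralizes $K$, so $H\subseteq C_G(g)$ for all $g\in K$; the twisted bicharacter condition on $B$ is precisely what forces the function $h \mapsto B(g,h)$ to be an $\omega_g|_H$-projective character).

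\medskip

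\noindent The first main step is to verify that $\mathcal{S}(K,H,B)$ is a fusion subcategory. Closure under direct summands is automatic from the definition via simples, and closure under duals will follow from the identity $\overline{B(g,h)} = B(g^{-1},h)$ up to the expected cocycle correction (using normality of $K$). The delicate point is closure under tensor product: the simples $(g,\pi)$ and $(g',\pi')$ decompose via a Mackey-type formula whose simple constituents $(x,\rho)$ have $x$ conjugate to $g g'$ and whose $\rho$'s arise from induction of twisted representations. Checking that every such constituent still satisfies the defining condition of $\mathcal{S}(K,H,B)$ reduces, after restricting to $H$, to the identity
\[
B(gg', h) \;=\; B(g,h)\,B(g',h)\,\beta(g,g';h),
\]
and symmetrically in the second variable, where $\beta$ is the $2$-coboundary expressing how $\omega_g \omega_{g'}$ differs from $\omega_{gg'}$. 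This is exactly the defining relation of an $\omega$-bicharacter in the sense of Definition~\ref{inv-bichar}, so the step goes through precisely for those $B$ we allow.

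\medskip

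\noindent For the inverse assignment, given a fusion subcategory $\mathcal{S}\subseteq\Rep(D^\omega(G))$, define $K$ to be the set of $g\in G$ such that some simple $(g,\pi)$ lies in $\mathcal{S}$; normality follows because conjugation by any $x\in G$ permutes simples and preserves fusion subcategories. Identifying $\mathcal{S} \cap \Rep(G)$ (the subcategory of simples with $g=1$) with a fusion subcategory of $\Rep(G)$, one obtains a normal subgroup $H \trianglelefteq G$ and, comparing $(1,\pi)\in \mathcal{S}$ with $(k,\pi)\in\mathcal{S}$ for $k\in K$, one reads off a function $B\colon K\times H \to k^\times$ from the common $H$-isotypic component. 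That $K$ and $H$ centralize each other follows from the requirement that $H$ acts by a scalar on every $(k,\pi)\in\mathcal{S}$ (any non-central action would produce a non-trivial matrix piece). I expect the \emph{main obstacle} to be proving that the function $B$ so obtained is genuinely an $\omega$-bicharacter and $G$-invariant: biadditivity in $k$ and $h$ requires delicate cocycle bookkeeping, because the natural restriction maps are twisted by the $2$-cocycles $\omega_g$, and one must verify that the coboundaries relating $\omega_{gg'}$ to $\omega_g\omega_{g'}$ match exactly the twisting built into Definition~\ref{inv-bichar}. Once this is established, the two constructions are visibly inverse to each other on the level of simple objects, proving the bijection.
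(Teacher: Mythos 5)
Your construction of $\S(K,H,B)$ matches the paper's, but the two places where the real work happens are missing. First, in the inverse direction you assert that $H$ acts by a scalar on every simple $(k,\pi)\in\S$, justifying this only by the parenthetical remark that ``any non-central action would produce a non-trivial matrix piece.'' That is not an argument: for an arbitrary fusion subcategory there is no a priori reason why $\pi|_H$ should be isotypic, and this is exactly the point at which the paper invokes modularity of $\Rep(D^\omega(G))$ and M\"uger's centralizer machinery. In the paper one first proves $K_{\D'}=H_{\D}$ and $H_{\D'}=K_{\D}$ (Proposition~\ref{prop-HDKD}, which uses $\D''=\D$), and then the centralization criterion of Lemma~\ref{lem-twisted} forces $|\chi(h)|=\deg\chi$ for $h\in H_\D$, which is what makes the scalar $B_\D(a,h)=\chi(h)/\deg\chi$ (with its $\beta_a$-corrections) well defined at all. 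You also flag the verification that this $B$ is a $G$-invariant $\omega$-bicharacter as ``the main obstacle'' but do not carry it out; in the paper this is a separate proposition resting on repeated use of the relation \eqref{beta-reln} and identities such as \eqref{gamma beta relation 1}, and it cannot be waved through.

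Second, the claim that the two assignments are ``visibly inverse on the level of simple objects'' is false as stated, and this is the other essential step. Given $\D$, it is not visible that $\D$ contains \emph{every} simple $(a,\chi')$ whose normalized restriction to $H_\D$ agrees with $B_\D(a,\cdot)$ — a priori $\D$ could contain only some of them, making $\D\mapsto(K_\D,H_\D,B_\D)$ non-injective. The paper closes this via Lemma~\ref{lem-E} (again using $\E''=\E$) together with the dimension count $\dim\S(K,H,B)=|K|[G:H]$ (Lemma~\ref{lem-dimS}, a Clifford-theory computation) and $\dim\D\cdot\dim\D'=\dim\C$. Note also that the paper never checks tensor-closure of $\S(K,H,B)$ by fusion rules at all: it shows $\S(K,H,B)$ and $\S(H,K,(B^{\op})^{-1})$ centralize each other and have complementary dimensions, and then Lemma~\ref{abelian subcat. is fusion} does the rest. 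Your Mackey-type route to tensor-closure is a genuinely different (and in principle workable) strategy — and as an aside, the constituents of $(g,\pi)\ot(g',\pi')$ are supported on products of conjugates of $g$ and $g'$, not only on conjugates of $gg'$ — but as written it defers precisely the cocycle matching it depends on, and it does nothing to supply the two missing ingredients above, which are where the theorem is actually proved.
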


As noted earlier, the above result is equivalent to a 
description of all group-theoretical braided fusion categories. 

Let $(K,H,B)$ be a triple as described in the above theorem and let
$\S(K,H,B)$ denote the corresponding fusion subcategory of $\Rep(D^{\omega}(G))$.
The next result gives a criterion for the fusion subcategory 
${\mathcal S}(K,H,B) \subseteq \Rep(D^{\omega}(G))$
to be nondegenerate or prime. 
Recall \cite{M1} that a nondegenerate braided fusion category  is called
{\em prime} if it has no proper nontrivial nondegenerate subcategories.

\begin{theorem}
\label{thm3}
\begin{enumerate}
\item[(i)] The fusion subcategory ${\mathcal S}(K,H,B) \subseteq \Rep(D^{\omega}(G))$
is nondegenerate if and only if $HK=G$ and the symmetric bicharacter
$BB^{\op}|_{(K\cap H)\times (K\cap H)}$ is nondegenerate.
\item[(ii)] $\Rep(D^{\omega}(G))$ is prime if and only if there is no triple
$(K,H,B)$, where $K$ and $H$ are proper normal subgroups of $G$
that centralize each other, $HK=G$, and $B$
is a $G$-invariant $\omega$-bicharacter on $K\times H$ such that 
$BB^{\op}|_{(K\cap H)\times (K\cap H)}$ is nondegenerate.
\end{enumerate}
\end{theorem}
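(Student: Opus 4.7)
The plan is to combine the classification of Theorem~\ref{thm2} with modularity of $\Rep(D^\omega(G))$ and an explicit description, at the level of triples, of the Müger centralizer and the intersection (meet) in the lattice of fusion subcategories.

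For part (i), I would exploit modularity: since $\Rep(D^\omega(G))$ is modular, for any fusion subcategory $\mathcal{D}$ the Müger centralizer $\mathcal{D}'$ taken inside $\Rep(D^\omega(G))$ satisfies $(\mathcal{D}')' = \mathcal{D}$, and $\mathcal{D}$ is itself nondegenerate exactly when its Müger center $\mathcal{D}\cap\mathcal{D}'$ equals $\Vec$. Thus the whole question becomes: express $\S(K,H,B)\cap\S(K,H,B)'$ as a triple and determine when it is the trivial triple.

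Making this concrete requires two ingredients that I expect to appear earlier in the body of the paper, between Theorem~\ref{thm2} and Theorem~\ref{thm3}: (a) a formula $\S(K,H,B)' = \S(K^{\#},H^{\#},B^{\#})$ identifying the centralizer as a triple, derived by computing the modular $S$-matrix of $\Z(\Vec_G^\omega)$ on simples of the form $(a,\pi)$ (a conjugacy class together with an $\omega$-projective representation of its centralizer) and reading off when two simples projectively centralize; and (b) a formula for the meet of two triples. Granting these, the condition that $\S(K,H,B)\cap\S(K,H,B)'$ is $\Vec$ should decouple into two independent pieces. A group-theoretic piece collapses the group slots: the groups indexing the centralizer should involve $C_G(H)$ and $C_G(K)$, and the intersection with $K,H$ will trivialize precisely under a complementarity condition, which I expect to be exactly $HK=G$. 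A bicharacter piece then governs the remaining pointed data on the common domain $K\cap H$, where the contributions of $B$ from $\S$ and of $B^{\op}$ from $\S'$ combine into the symmetric bicharacter $BB^{\op}|_{(K\cap H)\times(K\cap H)}$, whose nondegeneracy is exactly what is needed to kill the residual pointed subcategory.

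Part (ii) is then a straightforward corollary: by definition, $\Rep(D^\omega(G))$ is prime iff it has no proper nontrivial nondegenerate fusion subcategory; applying Theorem~\ref{thm2} and part (i), this is the non-existence of a triple $(K,H,B)$ with $K,H$ proper normal subgroups centralizing each other, $HK=G$, and $BB^{\op}|_{(K\cap H)\times(K\cap H)}$ nondegenerate. The main obstacle in the entire argument is ingredient (a): identifying the Müger centralizer of $\S(K,H,B)$ as an explicit triple. This is where the modular data of $\Rep(D^\omega(G))$ is actually used, and it is also where the $3$-cocycle $\omega$ enters nontrivially, through the $2$-cocycles on centralizers $C_G(a)$ that twist the relevant projective representations. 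Once (a) and (b) are in hand, the passage to the stated group plus bicharacter criteria is essentially bookkeeping in the lattice of triples.
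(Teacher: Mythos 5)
Your proposal follows essentially the same route as the paper: reduce nondegeneracy to triviality of the M\"uger center $\S(K,H,B)\cap\S(K,H,B)'$, compute that center as a triple via the centralizer formula and the intersection formula for triples (both of which are indeed established earlier, in Lemma~\ref{lemma:twisted-prime} and Proposition~\ref{prop:intersection and join}), and read off exactly the two decoupled conditions $HK=G$ and nondegeneracy of $BB^{\op}|_{(K\cap H)\times(K\cap H)}$, with part (ii) then immediate from the bijection of Theorem~\ref{thm2}. The one small correction is that the centralizer is simply $\S(K,H,B)'=\S(H,K,(B^{\op})^{-1})$, so its support is $H$ itself rather than anything built from $C_G(K)$ or $C_G(H)$; with that formula your bookkeeping goes through exactly as in the paper's proof of Proposition~\ref{thm:nondegenerate}.
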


We note that, in our situation, being nondegenerate is equivalent to being modular 
(see Section \ref{prelim: centralizers}).
Therefore, the above result combined with Theorem \ref{thm2} gives a
description of all modular group-theoretical fusion categories.



\subsection{Organization of the paper}
Section 2 contains necessary preliminary information
about fusion categories and module categories.
We also recall definitions and
results from \cite{M1, DGNO} concerning centralizers in braided
fusion categories.

Section 3 (respectively, Section 5) is devoted to a description
of fusion subcategories of  the representation
category of the Drinfeld double (respectively, twisted double)
of a finite group. We chose to treat the untwisted
and twisted cases separately because the twisted case involves rather technical 
cohomological computations which may obscure the main results. 
We note that when $\omega =1$ the results of Section~5 reduce 
to those of Section~3.

Sections 3, 5, and 6 contain proofs of some of
our main results stated above. Theorems~\ref{thm1}, \ref{thm2} 
and Theorem~\ref{thm3} correspond to
Theorems~\ref{bijection}, \ref{theorem:twisted-main}, and
Theorem \ref{thm:nondegenerate}, respectively.

Section 4 (respectively, Section 6) is devoted to a description
of the lattice and some invariants of fusion subcategories of the representation
category of the Drinfeld double (respectively, twisted double)
of a finite group. 

In Section~7 we prove that a braided fusion category is group-theoretical
if and only if it is equivalent to an equivariantization
of a pointed fusion category with respect to an action of a group $G$.

\subsection{Acknowledgments}
The second author was partially supported 
by NSA grant H98230-07-1-0081 and NSF grant DMS-0800545. He is grateful
to Vladimir Drinfeld, Pavel Etingof, and Viktor Ostrik for helpful discussions.
The third author was partially supported by NSA grant H98230-07-1-0038
and NSF grant DMS-0800832.
\end{section}

\begin{section}
{Preliminaries}

\begin{subsection}{Group-theoretical fusion categories}
\label{fusion cats.}

A {\em fusion category} over $k$ is a
$k$-linear semisimple rigid tensor category with finitely many
isomorphism classes of simple objects and finite dimensional Hom-spaces
such that the neutral object is simple \cite{ENO}.

A fusion category is said to be {\em pointed} if all its simple
objects are invertible. A typical example of a pointed category
is $\Vec_G^{\omega}$, the category of finite dimensional vector
spaces over $k$ graded by the finite group $G$. The morphisms in this category
are linear transformations that respect the grading and the associativity
constraint is given by the normalized $3$-cocycle $\omega$ on $G$.

Consider the fusion category $\Vec_G^{\omega}$. Let
$H$ be a subgroup of $G$ such that $\omega|_{H \times H \times H}$
is cohomologically trivial. Let $\psi$ be a $2$-cochain in $C^2(H,
\, k^\times)$ satisfying $\omega|_{H \times H \times H} = \delta^2\psi$.
The twisted group algebra $k^{\psi}[H]$ is an associative unital
algebra in $\Vec_G^{\omega}$. Define $\C = \C(G, \, \omega, \, H,
\, \psi)$ to be the category of $k^{\psi}[H]$-bimodules in
$\Vec_G^{\omega}$. Then $\C$ is a fusion category with tensor
product $\ot_{k^{\psi}[H]}$ and unit object $k^{\psi}[H]$.
Categories of the form $\C(G, \, \omega, \, H, \, \psi)$
are known as {\em group-theoretical} \cite[Defn.~8.40]{ENO}, \cite{O2}.
It is known that a fusion category
$\C$ is group-theoretical if and only if it is dual
to a pointed category with respect to an indecomposable module category. 
(See \cite{O1} and \cite{M3} for notions of module category and duality.)

\end{subsection}

\begin{subsection}{Adjoint categories and central series of fusion categories}

The following definitions were introduced in \cite{GN, ENO}.

Let $\C$ be a fusion category. 
The {\em adjoint category} $\C_{ad}$ of $\C$
is the full fusion subcategory of $\C$ generated by all subobjects of $X\ot X^*$, where $X$ runs
through simple objects of $\C$.  For example, let $G$ be a finite group and let $\C=\Rep(G)$ be 
the representation category of $G$. Then $\C_{ad} = \Rep(G/Z(G))$, where $Z(G)$ is the center of $G$.

Let $\C^{(0)}= \C,\, \C^{(1)} = \C_{ad},$
and $\C^{(n)} = (\C^{(n-1)})_{ad}$ for every integer $n\geq 1$.

The non-increasing sequence of fusion subcategories of $\C$
\begin{equation*}
\C= \C^{(0)} \supseteq  \C^{(1)} \supseteq \cdots
\supseteq \C^{(n)} \supseteq \cdots
\end{equation*}
is called the {\em upper central series} of $\C$.
This definition generalizes the classical one, as we show next:
Let $G$ be a finite group with identity element $e$, and $\C=\Rep(G)$. Let
\[
\{e\} = C^0(G) \subseteq C^{1}(G) \subseteq
\cdots  \subseteq   C^{n}(G) \subseteq \cdots
\]
be the upper central series of $G$; i.e.,
$C^0(G):=\{e\}, C^{1}(G):= Z(G)$ (the center of $G$) and for $n\geq 1$
the subgroup  $C^n(G)$ is defined by $C^n(G)/C^{n-1}(G) = Z(G/C^{n-1}(G))$.
Then $\C^{(n)} = \Rep(G/C^n(G))$.

Let $\C$ be a fusion category such that its Grothendieck ring $K_0(\C)$ is commutative
(e.g., $\C$ is braided).
Let $\mathcal{\D}$ be a fusion subcategory of $\C$.
The {\em commutator} $\D^{co}$ of $\D$ is defined
to be the fusion subcategory
of $\C$ generated by all simple objects $Y$ of $\C$ such that $Y\otimes Y^*\in \D$.
An example:
Let $G$ be a finite group and let $\C=\Rep(G)$.
Any fusion subcategory  of $\C$ is of the form
$\Rep(G/N)$ for some normal subgroup $N$ of $G$.
The simple objects of the
category $\Rep(G/N)^{co}$  are irreducible 
representations $Y$ of $G$ for which $ Y \otimes Y^*$  restricts
to the trivial representation of $N$.

Let $\C_{(0)}= \Vec,\, \C_{(1)} = \C_{pt}$
(the maximal pointed subcategory of $\C$)
and $\C_{(n)} = (\C_{(n-1)})^{co}$ for every integer $n\geq 1$.
The non-decreasing sequence of fusion subcategories of $\C$
\begin{equation}
\Vec= \C_{(0)} \subseteq  \C_{(1)} \subseteq \cdots \subseteq
\C_{(n)} \subseteq \cdots
\end{equation}
is called the {\em lower central series} of $\C$.
This definition generalizes the classical one:
Let $G$ be a finite group and  $\C=\Rep(G)$. Let
\[
G = C_0(G) \supseteq C_{1}(G) \supseteq   \cdots  \supseteq
C_{n}(G) \supseteq \cdots
\]
be the lower central series of $G$; i.e.,
$C_n(G) = [G, C_{n-1}(G)]$ for all $n\geq 1$.
Then $\C_{(n)} = \Rep(G/C_n(G))$.

\end{subsection}

\begin{subsection}{Centralizers in braided fusion categories}
\label{prelim: centralizers}

Let $\C$ be a premodular category (i.e., a braided ribbon fusion category) 
with braiding $c$  and twist $\theta$ (see \cite{BK}). By the {\em $S$-matrix}
of $\C$ we mean a square matrix $S:=( S(X,\,Y))$
whose columns and rows are labeled by simple objects of $\C$ and the entry
$S(X,\,Y)$ is the (quantum) trace (defined using $c$ and $\theta$)
of $c_{Y,X}c_{X,Y} : X\ot Y \to X\ot Y$.

Following M\"uger,  we will say that two objects 
$X, Y \in \C$ {\em centralize}  each other if 
$$
c_{Y, X} \circ c_{X, Y} = \id_{X \otimes Y}.
$$ 

Assuming that categorical dimensions in $\C$ are positive,
simple objects $X$ and $Y$ in $\C$ centralize each other
if and only if $S(X, \, Y) = d(X) d(Y)$ \cite[Prop.\ 2.5]{M1}.
All categories considered in this work have positive categorical
dimensions.

Let $\D$ be a full (not necessarily tensor) 
subcategory of $\C$.  In \cite{M1}, M\"uger introduced the notion of
the {\em centralizer} of $\D$ in $\C$ as 
the fusion subcategory  
\[
\D' := \left\{ X \in \C \mid c(Y, \, X) \circ c(X, \, Y)
= \id_{X \otimes Y}, \mbox{ for all } Y \in \D \right\}. 
\]

We define the {\em M\"uger center} of $\C$, denoted $\Z_2(\C)$, to be
the full fusion subcategory $\C'$. 
The category $\C$ is said to be {\em nondegenerate} if $\Z_2(\C) = \Vec$ (the fusion
subcategory generated by the unit object). Such a category $\C$ is called {\em prime}
if it has no non-trivial proper nondegenerate subcategories. It was shown in \cite{M1} that a nondegenerate 
braided fusion category factorizes (in general, non uniquely) into a product of prime ones.

Note that the corresponding $S$-matrix $S$ of $\C$ is invertible, i.e., $\C$ is modular,
if and only if $\C$ is nondegenerate.

It is also known \cite{M1} that if $\D$
is a fusion subcategory of a modular category $\C$ then $\D''=\D$ and
\begin{equation}
\label{dimension}
\dim(\D) \cdot \dim(\D') = \dim(\C).
\end{equation}

\begin{lemma}
\label{abelian subcat. is fusion}
Let $\A$ and $\B$ be full Abelian subcategories of $\C$ (a priori not 
assumed to be closed under tensor product or taking duals) centralizing each
other and $dim(\A) \cdot dim(\B) = dim(\C)$. Then $\A$ and $\B$ are
fusion subcategories and $\A'=\B$.
\end{lemma}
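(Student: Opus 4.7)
The plan is to use a dimension-counting argument to show that $\A$ and $\B$ must already coincide with the fusion subcategories they generate. Let $\tilde\A$ and $\tilde\B$ denote the fusion subcategories of $\C$ generated by $\A$ and $\B$ respectively. The property that two objects $X,Y$ satisfy $c_{Y,X}\circ c_{X,Y}=\id_{X\otimes Y}$ is preserved under direct sums, tensor products (via the hexagon axioms), and duals (by rigidity)---this is precisely the content of the fact that M\"uger's centralizer of any full subcategory is itself a fusion subcategory. Hence the centralizing hypothesis, which is given on simple objects of $\A$ and $\B$, extends to all objects of $\tilde\A$ and $\tilde\B$, giving $\tilde\B\subseteq (\tilde\A)'$.

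Applying the dimension formula~\eqref{dimension} to the fusion subcategory $\tilde\A$ of the modular category $\C$ yields $\dim(\tilde\A)\cdot\dim((\tilde\A)')=\dim(\C)$. Combining this with the inclusions $\A\subseteq\tilde\A$ and $\B\subseteq\tilde\B\subseteq(\tilde\A)'$, together with the hypothesis $\dim(\A)\cdot\dim(\B)=\dim(\C)$, gives the chain
\begin{equation*}
\dim(\C)=\dim(\A)\cdot\dim(\B)\leq \dim(\tilde\A)\cdot\dim(\tilde\B)\leq \dim(\tilde\A)\cdot\dim((\tilde\A)')=\dim(\C),
\end{equation*}
forcing equality at every step. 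Since $\A$ is a full abelian subcategory of the semisimple category $\C$, it is determined by its set of isomorphism classes of simple objects, and its dimension equals the sum of the squares of their (positive) categorical dimensions. Together with $\A\subseteq\tilde\A$ and $\dim(\A)=\dim(\tilde\A)$, this forces $\A=\tilde\A$, and the same argument yields $\B=\tilde\B$. Thus both $\A$ and $\B$ are fusion subcategories. Finally, $\B\subseteq \A'$ together with $\dim(\B)=\dim(\C)/\dim(\A)=\dim(\A')$ gives $\B=\A'$.

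The one subtle point I anticipate is the initial observation that the centralizing relation extends from generating simple objects to the full fusion subcategories they generate, as the hypothesis provides this only at the level of simples. This extension is routine but essential: without it, neither the key inclusion $\tilde\B\subseteq(\tilde\A)'$ nor the eventual conclusion $\B=\A'$ would be available. All remaining steps are bookkeeping with the modularity of $\C$.
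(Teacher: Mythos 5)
Your proposal is correct and follows essentially the same route as the paper's proof: pass to the generated fusion subcategory $\tilde\A$, note that the centralizing condition extends so that $\B\subseteq(\tilde\A)'$, and then use the modular dimension formula $\dim(\tilde\A)\cdot\dim((\tilde\A)')=\dim(\C)$ together with the hypothesis to force equality everywhere, which (by positivity of dimensions) gives $\A=\tilde\A$, $\B=\tilde\B=\A'$. The paper's argument is just a more compressed version of the same dimension count, so your extra bookkeeping (e.g.\ justifying the extension of centralization to $\tilde\A$, $\tilde\B$) only makes explicit what the paper leaves implicit.
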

\begin{proof}
Let $\tilde{\A}$ denote the fusion subcategory generated by $\A$. 
Note that $\B$ centralizes $\tilde{\A}$, i.e., $\B \subseteq (\tilde{\A})'$.
We have
$$
\dim(\C) 
= \dim(\tilde{\A}) \cdot \dim((\tilde{\A})') \\
 \geq \dim(\A) \cdot \dim(\B) \\
= \dim(\C).
$$
Hence, the inequality above is an equality, so 
$\A$ and $\B$ are fusion subcategories and $\A'=\B$.
\end{proof}

\end{subsection}

\end{section}


\begin{section}
{Fusion subcategories of the quantum double of 
a finite group}

\begin{subsection}
{The quantum double of a finite group}

Let $G$ be a finite group. 
For any $g \in G$, let $K_g$ denote the conjugacy
class of $G$ containing $g$. 
Let $R$ denote a complete set of representatives
of conjugacy classes of $G$. Let $\C$ denote the representation category
$\Rep(D(G))$ of the Drinfeld double of the group $G$:
\begin{equation*}
\C := \Rep(D(G)).
\end{equation*}

\begin{remark}
\label{C = bundles}
\begin{enumerate}
\item [(i)]
It is well known that $\C$ is equivalent, as a braided fusion category,
to the category of $G$-equivariant vector bundles on $G$, where $G$ acts on itself
by conjugation.
\item[(ii)]  The category $\C$ contains the category $\Rep(G)$ of representations of $G$
as a  fusion subcategory. Namely, $\Rep(G)$ is identified with the category
of $G$-equivariant bundles supported on the identity element of $G$.
\end{enumerate}
\end{remark}

The set of isomorphism classes of simple objects  of $\C$ is identified with 
the following set:
\begin{equation}
\label{Gamma}
\Gamma := \{(a, \, \chi) \mid a \in R \mbox{ and } \chi \mbox{ is an 
irreducible character of } C_G(a) \}.
\end{equation}
The categorical dimension of the simple object $(a, \, \chi)$ is
\begin{equation}
\label{d}
d(a, \, \chi) = |K_a| \, \deg \chi = \frac{|G|}{|C_G(a)|} \, \deg \chi.
\end{equation}

It is well known that $\C$ is a modular category  \cite{AC1}.
Its  $S$-matrix $S$ and twist $\theta$ are given by the following formulas :
%
%
\begin{equation*}
\begin{split}
S((a, \, \chi), \, (b, \, \chi^\prime)) 
&= \frac{|G|}{|C_G(a)||C_G(b)|} \sum_{g \in G(a, \, b)}
\overline{\chi}(gbg^{-1}) \, \overline{\chi}^\prime(g^{-1}ag),\\
\theta(a, \, \chi) 
& = \frac{\chi(a)}{\deg \chi},
\end{split}
\end{equation*}
\noindent for all $(a, \, \chi), (b, \, \chi^\prime) \in \Gamma$,
where  $G(a, \, b) = \{g \in G \mid agbg^{-1} = gbg^{-1}a\}$ and
$\overline{\chi}$ denotes the character conjugate to $\chi$.
For more details we refer the reader to \cite{BK}.

Note that the categorical dimensions \eqref{d} of objects of $\C$ are defined 
using the braiding on $\C$ and the above twist $\theta$.

\begin{remark}
It is known that the entries of the $S$-matrix lie in a cyclotomic field.
Also, the values of characters of a finite group are sums of roots of unity.
So we may assume that all scalars appearing herein are complex numbers;
in particular, complex conjugation and absolute values make sense. 
\end{remark}

The following lemma was proved in \cite[Lemma 3.1]{NN}.

\begin{lemma}
\label{centralize}
Two objects $(a, \, \chi), (b, \, \chi^\prime) \in \Gamma$ centralize each other
if and only if the following conditions hold:\\
(i) The conjugacy classes $K_a, K_b$ commute element-wise,\\
(ii) $\chi(gbg^{-1}) \chi^\prime(g^{-1}ag)=\deg \chi \; \deg \chi'$, 
for all $g \in G$.
\end{lemma}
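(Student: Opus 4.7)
My plan is to reduce the centralization condition to an equality of $S$-matrix entries and then use the triangle inequality together with the bound $|\chi(x)| \leq \deg \chi$ valid for any character $\chi$ of a finite group.

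First, by the discussion in Section~\ref{prelim: centralizers}, two simple objects $X, Y$ in a braided fusion category with positive categorical dimensions centralize each other if and only if $S(X, Y) = d(X) d(Y)$. Substituting the explicit formulas for the $S$-matrix of $\Rep(D(G))$ and for $d(a,\chi)$ recorded in \eqref{d}, and cancelling the common factor $|G|/(|C_G(a)| |C_G(b)|)$, centralization of $(a,\chi)$ and $(b,\chi')$ becomes
\[
\sum_{g \in G(a,b)} \overline{\chi}(gbg^{-1})\, \overline{\chi'}(g^{-1}ag) \;=\; |G| \deg \chi \deg \chi'.
\]
Recall that $g \in G(a,b)$ is equivalent to $gbg^{-1} \in C_G(a)$ and also to $g^{-1}ag \in C_G(b)$, so the characters $\chi$ and $\chi'$ are being evaluated on their respective domains.

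Next, I would estimate the left-hand side in absolute value. Since $|\chi(x)| \leq \deg \chi$ for every $x \in C_G(a)$ and similarly for $\chi'$, each summand has absolute value at most $\deg \chi \deg \chi'$. Combining this with the triangle inequality and the obvious bound $|G(a,b)| \leq |G|$ yields the chain
\[
\Bigl|\sum_{g \in G(a,b)} \overline{\chi}(gbg^{-1})\, \overline{\chi'}(g^{-1}ag)\Bigr| \;\leq\; \sum_{g \in G(a,b)} \deg \chi \deg \chi' \;=\; |G(a,b)|\deg \chi \deg \chi' \;\leq\; |G| \deg \chi \deg \chi'.
\]
Equality between the leftmost and rightmost quantities is exactly what centralization demands.

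Finally, I would extract equality conditions. Since the target value $|G|\deg \chi \deg \chi'$ is positive real, forcing equality in the last inequality of the chain gives $|G(a,b)| = |G|$, i.e.\ $G(a,b) = G$, which translates to condition~(i) (the conjugacy classes $K_a$ and $K_b$ commute element-wise). Forcing equality in the triangle inequality, together with the pointwise bound already being saturated, forces each summand to be real positive and equal to $\deg \chi \deg \chi'$; taking complex conjugates yields $\chi(gbg^{-1})\chi'(g^{-1}ag) = \deg \chi \deg \chi'$ for every $g \in G$, which is condition~(ii). The converse direction is immediate from this same computation by plugging (i) and (ii) back in. The only genuinely delicate step is the simultaneous extraction of the two equality conditions in the last paragraph, as it requires combining the reality of the target with the individual magnitude bounds rather than applying a single inequality.
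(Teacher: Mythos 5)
Your proof is correct. The paper does not reprove this lemma itself (it cites \cite[Lemma 3.1]{NN}), but your route --- translating centralization into the equality $S((a,\chi),(b,\chi'))=d(a,\chi)\,d(b,\chi')$ via M\"uger's criterion, cancelling the common factor, and then extracting the equality cases of the triangle inequality together with the bound $|\chi(x)|\leq \deg\chi$ to force $G(a,b)=G$ and each summand equal to $\deg\chi\,\deg\chi'$ --- is exactly the technique the paper employs for the generalization in Lemma~\ref{centralize'}, and matches the cited argument.
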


We generalize this result in the next lemma.

\begin{lemma}
\label{centralize'}
For any two objects $(a, \, \chi), (b, \, \chi^\prime) \in \Gamma$,
$|S((a, \chi), (b, \chi'))| = d(a,\chi) \, d(b,\chi')$ 
if and only if the following conditions hold:\\
(i) The conjugacy classes $K_a, K_b$ commute element-wise,\\
(ii) $|\chi(gbg^{-1})| = \deg \chi$, $|\chi^\prime(g^{-1}ag)|=\deg \chi'$,
and $\chi(gbg^{-1}) \chi'(g^{-1}ag) = \chi(b) \chi'(a)$, for all $g \in G$.
\end{lemma}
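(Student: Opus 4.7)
The plan is to compare the general bound $|S((a,\chi),(b,\chi'))| \leq d(a,\chi)\,d(b,\chi')$ (which must hold for any premodular category since $S(X,Y)/d(X)d(Y)$ is a normalized trace of a unitary braiding) with the explicit character formula, and extract exactly which inequalities must be equalities. Using $d(a,\chi)\,d(b,\chi') = \frac{|G|^2}{|C_G(a)||C_G(b)|}\,\deg\chi\,\deg\chi'$, the hypothesis $|S| = d\cdot d$ translates into
\[
\Bigl|\sum_{g \in G(a,b)} \overline{\chi}(gbg^{-1})\,\overline{\chi'}(g^{-1}ag)\Bigr| = |G|\,\deg\chi\,\deg\chi'.
\]
Since each character value satisfies $|\chi(x)| \leq \deg\chi$ and $|G(a,b)| \leq |G|$, this forces three things simultaneously via the triangle inequality.

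First I would deduce condition (i): the number of summands is at most $|G(a,b)|$ and each summand has modulus at most $\deg\chi \cdot \deg\chi'$, so equality in the global bound forces $|G(a,b)| = |G|$, i.e.\ $G(a,b) = G$, which is precisely the statement that $K_a$ and $K_b$ commute element-wise. Next, with $G(a,b) = G$, the sum has exactly $|G|$ terms each of modulus $\leq \deg\chi \cdot \deg\chi'$; achieving the bound forces every term to have maximum modulus, which gives $|\chi(gbg^{-1})| = \deg\chi$ and $|\chi'(g^{-1}ag)| = \deg\chi'$ for all $g \in G$. Finally, the triangle inequality for a sum of complex numbers of equal modulus is an equality only when all terms have the same complex argument; evaluating at $g=e$ identifies this common value as $\overline{\chi}(b)\,\overline{\chi'}(a)$, and conjugating yields $\chi(gbg^{-1})\,\chi'(g^{-1}ag) = \chi(b)\,\chi'(a)$ for all $g$.

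For the converse, I would simply substitute: assuming (i) and (ii), the sum ranges over all of $G$ and every summand equals $\overline{\chi}(b)\,\overline{\chi'}(a)$, so the total is $|G|\,\overline{\chi}(b)\,\overline{\chi'}(a)$, whose modulus is $|G|\,\deg\chi\,\deg\chi'$ since $|\chi(b)| = \deg\chi$ and $|\chi'(a)| = \deg\chi'$ by (ii) at $g = e$. Multiplying by the prefactor recovers $d(a,\chi)\,d(b,\chi')$.

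No real obstacle is anticipated; the only subtlety is cleanly decoupling the two separate conditions imposed by the triangle inequality (equal modulus of each term, and equal argument of each term), rather than collapsing them into the single identity appearing in Lemma~\ref{centralize}. The lemma is essentially a strict refinement of that earlier centralization criterion: dropping the requirement that the common complex value $\chi(b)\chi'(a)$ equal $\deg\chi \cdot \deg\chi'$ is exactly what distinguishes $|S| = d\cdot d$ from $S = d\cdot d$.
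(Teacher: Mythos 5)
Your proposal is correct and follows essentially the same route as the paper: reduce $|S|=d\cdot d$ to $\bigl|\sum_{g\in G(a,b)}\chi(gbg^{-1})\chi'(g^{-1}ag)\bigr| = |G|\deg\chi\,\deg\chi'$, then use the triangle inequality to force $G(a,b)=G$ and maximal modulus of every term, and finally use equality of arguments (the paper phrases this via roots of unity $\alpha_g\beta_g$ all equal to $\alpha_e\beta_e$) to get $\chi(gbg^{-1})\chi'(g^{-1}ag)=\chi(b)\chi'(a)$, with the converse by direct substitution. No substantive difference from the paper's argument.
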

\begin{proof}
The condition $|S((a, \chi), (b, \chi'))| = d(a,\chi) \, d(b,\chi')$
is equivalent to the condition 
\begin{equation}
\label{eqn}
\left| \sum_{g \in G(a, \, b)}
\chi(gbg^{-1}) \, \chi^\prime(g^{-1}ag) \right| = 
|G| \, \deg \chi \, \deg \chi^\prime,
\end{equation}
where $G(a, \, b) = \{g \in G \mid agbg^{-1} = gbg^{-1}a\}$.
It is clear that if the two conditions in the statement 
of the lemma hold, then \eqref{eqn} holds. 

Now suppose that \eqref{eqn} holds. 
We will show that this implies the two conditions in the statement of
the lemma. We have
\begin{equation*}
\begin{split}
|G| \, \deg \chi \, \deg \chi^\prime 
&= \left| \sum_{g \in G(a, \, b)} \chi(gbg^{-1}) \, \chi^\prime(g^{-1}ag)
\right|\\
&\leq \sum_{g \in G(a, \, b)} |\chi(gbg^{-1})| \, |\chi^\prime(g^{-1}ag)|\\
&\leq |G| \, \deg \chi \, \deg \chi^\prime.
\end{split}
\end{equation*}
So
$\sum_{g \in G(a, \, b)} |\chi(gbg^{-1})| \, |\chi^\prime(g^{-1}ag)|
= |G| \, \deg \chi \, \deg \chi^\prime$. Since 
$$
|G(a, \, b)| \leq |G|,\\
\,\,|\chi(gbg^{-1})| \leq \deg \chi, \mbox{ and } 
|\chi^\prime(g^{-1}ag)| \leq \deg \chi^\prime, 
$$
we must have
$G(a, \, b) = G$, 
$|\chi(gbg^{-1})| = \deg \chi$, and
$|\chi^\prime(g^{-1}ag)| = \deg \chi^\prime$. 
The equality $G(a, \, b) = G$
implies that the conjugacy classes $K_a, K_b$ commute element-wise. 
Since
$|\chi(gbg^{-1})| = \deg \chi$ and
$|\chi^\prime(g^{-1}ag)| = \deg \chi^\prime$, there exist roots of
unity $\alpha_g$ and $\beta_g$ such that 
$\chi(gbg^{-1}) = \alpha_g \, \deg \chi$ and
$\chi^\prime(g^{-1}ag) = \beta_g \, \deg \chi^\prime$, for all $g \in G$.
Substitute in \eqref{eqn} to obtain the equation 
\begin{equation}
\label{a}
\left|\sum_{g \in G} \alpha_g \beta_g \right| = |G|.
\end{equation}
Note that \eqref{a} holds if and only if 
$\alpha_g \beta_g = \alpha_e \beta_e$, for all $g \in G$. 
This is equivalent to
saying that $\chi(gbg^{-1}) \, \chi^\prime(g^{-1}ag) 
= \chi(b) \, \chi^\prime(a)$,
for all $g \in G$, and the lemma is proved.
\end{proof}

\begin{note}
\label{centralize''}
The following special cases of Lemma \ref{centralize'} will be used later.\\
(i) $|S((e, \chi), (b, \chi'))| = d(e, \chi) \, d(b,\chi')$ if and only
if $\chi|_{[G,b]} = \deg \chi$, where $$[G,b]=\langle gbg^{-1}b^{-1} \mid g \in G\rangle .$$\\
(ii) $|S((a, 1), (b, \chi'))| = d(a,1) \, d(b,\chi')$ if and only
if the conjugacy classes $K_a, K_b$ commute element-wise and
$\chi'|_{[G,a]} = \deg \chi'$.
\end{note}

\end{subsection}

\begin{subsection}{Canonical data determined by a fusion subcategory of $\mathbf{\Rep(D(G))}$}
\label{D --> triple}

Let $\D$ be a fusion subcategory of $\C$. There are two canonical
normal subgroups of $G$ determined by $\D$. First, let
\begin{equation}
K_\D := \{gag^{-1}\mid g\in G\,
        \text{ and } (a,\chi)\in \D \cap \Gamma \text{ for some }\chi \}
\end{equation}
be the {\em support} of $\D$. That is, $K_\D$ is a minimal subgroup of $G$
with the property that every vector bundle in $\D$ is supported on $K_\D$.

Second, let $H_\D$ be the normal subgroup of $G$ such that
$\D \cap \Rep(G) = \Rep(G/H_\D)$. Equivalently, 
\begin{equation}
H_\D:= \bigcap_{\chi: (e,\, \chi)\in \D}\, \mbox{Ker}(\chi), 
\end{equation}
where $\mbox{Ker}(\chi)=\{g\in G \mid \chi(g) = \deg \chi \}$ is
the kernel of the character $\chi$.

\begin{proposition}
\label{HD'=KD}
Let $\D$ be a fusion subcategory of $\C$.
Then  $H_{\D'} =K_\D$, $K_{\D'} =H_\D$, and the
subgroups $K_\D$ and $H_\D$ centralize each other.
\end{proposition}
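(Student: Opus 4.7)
The plan is to prove $H_{\D'} = K_\D$ directly from Lemma~\ref{centralize}, and then to deduce both the identity $K_{\D'} = H_\D$ and the centralizing property from the modular duality $\D'' = \D$ available because $\C$ is modular.

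First I would determine which simple objects $(e,\chi)$ lie in $\D'$; by Remark~\ref{C = bundles}(ii) these are exactly the simple objects of $\D' \cap \Rep(G)$. Specializing Lemma~\ref{centralize} to $a = e$, the object $(e,\chi)$ centralizes a simple $(b,\chi') \in \D \cap \Gamma$ if and only if $\chi(gbg^{-1}) = \deg\chi$ for every $g \in G$, i.e., $K_b \subseteq \Ker(\chi)$. Requiring this for every simple object of $\D$, and using that $\Ker(\chi)$ is a subgroup of $G$, one sees that $(e,\chi) \in \D'$ if and only if $K_\D \subseteq \Ker(\chi)$. Therefore $\D' \cap \Rep(G) = \Rep(G/K_\D)$, which by definition of $H_{\D'}$ yields $H_{\D'} = K_\D$.

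The identity $K_{\D'} = H_\D$ then falls out formally: applying the just-proved equality to $\D'$ in place of $\D$ gives $H_{\D''} = K_{\D'}$, and since $\C$ is modular we have $\D'' = \D$, so $K_{\D'} = H_\D$. For the centralizing property, observe that $K_\D$ is generated, as a subgroup of $G$, by the conjugacy classes $K_a$ with $(a,\chi) \in \D \cap \Gamma$, and similarly $H_\D = K_{\D'}$ is generated by the conjugacy classes $K_b$ with $(b,\chi') \in \D' \cap \Gamma$. Since $\D'$ is the M\"uger centralizer of $\D$, each such pair $(a,\chi)$ and $(b,\chi')$ centralize each other in $\C$, and Lemma~\ref{centralize}(i) then forces $K_a$ and $K_b$ to commute element-wise. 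Element-wise commutation passes from generating sets to the subgroups they generate, so $K_\D$ and $H_\D$ centralize each other.

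No serious obstacle arises: once the centralizing criterion of Lemma~\ref{centralize} is in hand, the argument is essentially bookkeeping with characters and conjugacy classes, with modularity supplying the symmetry $\D'' = \D$ that makes the roles of $\D$ and $\D'$ interchangeable.
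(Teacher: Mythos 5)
Your proof is correct and follows essentially the same route as the paper: specialize Lemma~\ref{centralize} to objects $(e,\chi)$ to get $\D'\cap\Rep(G)=\Rep(G/K_\D)$, hence $H_{\D'}=K_\D$; then apply this to $\D'$ together with M\"uger's double centralizer theorem $\D''=\D$ for $K_{\D'}=H_\D$; and use part (i) of Lemma~\ref{centralize} for the centralizing claim. The only difference is that you spell out the bookkeeping (kernels being subgroups, passing element-wise commutation from conjugacy classes to the subgroups they generate) that the paper leaves implicit.
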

\begin{proof}
Consider a simple object $(e,\, \chi)$ of $\Rep(G)$.
By Lemma~\ref{centralize} it centralizes $\D$ if and only if
$\chi(a) =\deg\chi$ for all $a\in K_\D$. Thus,
$\D'\cap \Rep(G) = \Rep(G/K_\D)$, which is the first equality.

The second equality follows by replacing $\D$ with $\D'$ and using M\"uger's
double centralizer theorem (${\mathcal D}''={\mathcal D}$)
\cite{M1}. By Lemma~\ref{centralize}, $K_\D$ and $K_{\D'}$ 
centralize each other.
\end{proof}


\begin{lemma}
\label{noname}
Let $\E$ be a fusion subcategory of $\Rep(D(G))$ and let
$(a,\chi) \in \E \cap \Gamma$. Suppose $\chi'$ is an irreducible
character of $C_G(a)$ such that $\frac{\chi'|_{K_{\E'}}}{\deg \chi'} 
= \frac{\chi|_{K_{\E'}}}{\deg \chi}$, where $K_{\E'}$ is the 
support of $\E'$. Then $(a, \chi') \in \E$.
\end{lemma}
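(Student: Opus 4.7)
The plan is to invoke M\"uger's double centralizer theorem ($\E = \E''$) to reduce the problem to showing that $(a, \chi')$ centralizes every simple object $(b, \chi'') \in \E' \cap \Gamma$, using the character-theoretic criterion of Lemma~\ref{centralize}.

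For such a $(b, \chi'')$, the hypothesis that $(a, \chi) \in \E$ centralizes it gives, by Lemma~\ref{centralize}, that the conjugacy classes $K_a$ and $K_b$ commute element-wise and that $\chi(gbg^{-1})\,\chi''(g^{-1}ag) = \deg\chi\cdot\deg\chi''$ for every $g \in G$. The element-wise commutation assertion is independent of the characters involved and so passes immediately to $(a, \chi')$. Moreover, since $|\chi(x)|\le\deg\chi$ and $|\chi''(x)|\le\deg\chi''$ for any $x$, the second identity forces $|\chi(gbg^{-1})| = \deg\chi$; in particular $\chi(gbg^{-1})\ne 0$, so the ratio $\chi(gbg^{-1})/\deg\chi$ is a well-defined unimodular scalar.

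Next I would observe that $gbg^{-1} \in K_{\E'}$, because $K_{\E'}$ is normal in $G$ and contains $b$, and that $K_{\E'} \subseteq C_G(a)$ by Proposition~\ref{HD'=KD} (since $K_{\E}$ and $K_{\E'}$ centralize each other and $a \in K_\E$), so the restriction hypothesis is meaningful at the element $gbg^{-1}$. Substituting $\chi'(gbg^{-1}) = (\deg\chi'/\deg\chi)\,\chi(gbg^{-1})$ into the centralizing identity for $(a,\chi)$ produces $\chi'(gbg^{-1})\,\chi''(g^{-1}ag) = \deg\chi'\cdot\deg\chi''$. Combined with the commutation of $K_a$ and $K_b$, Lemma~\ref{centralize} yields that $(a,\chi')$ centralizes $(b, \chi'')$, and since $(b, \chi'')$ was an arbitrary simple object of $\E'$, we conclude $(a, \chi') \in \E'' = \E$.

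The argument does not involve any serious obstacle; the key insight is simply that the hypothesis constrains precisely the values of the ratio $\chi/\deg\chi$ on elements of the form $gbg^{-1}$ with $b \in K_{\E'}$, which are exactly the elements where the centralizing identities for $(a,\chi)$ are evaluated. Once this alignment is noticed, the substitution is immediate and the double centralizer theorem closes the argument.
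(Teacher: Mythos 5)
Your proposal is correct and follows essentially the same route as the paper: fix $(b,\chi'')\in\E'\cap\Gamma$, use the hypothesis to replace $\chi'(gbg^{-1})/\deg\chi'$ by $\chi(gbg^{-1})/\deg\chi$ in condition (ii) of Lemma~\ref{centralize}, and conclude via $\E''=\E$. The extra remarks (that $gbg^{-1}\in K_{\E'}\subseteq C_G(a)$ and that $\chi(gbg^{-1})\neq 0$) are harmless justifications of the same substitution the paper performs.
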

\begin{proof}
Let us show that $(a, \chi')$ centralizes $\E'$. Pick any $(b, \chi'') \in 
\E' \cap \Gamma$. To see that $(a, \chi')$ and $(b, \chi'')$ centralize
each other we only need to check that Condition (ii) of Lemma \ref{centralize}
is satisfied. We have 
$$\frac{\chi'(gbg^{-1})}{\deg \chi'} \frac{\chi''(g^{-1}ag)}{\deg \chi''}
= \frac{\chi(gbg^{-1})}{\deg \chi} \frac{\chi''(g^{-1}ag)}{\deg \chi''}
=1,
$$
for all $g \in G$. The first equality above is due to the equality
$\frac{\chi'|_{K_{\E'}}}{\deg \chi'} = \frac{\chi|_{K_{\E'}}}{\deg \chi}$,
while the second equality is due to the fact that $(a, \chi')$ and $(b, \chi'')$
centralize each other. Therefore, $(a, \chi')$ centralizes $\E'$, i.e.,
$(a, \chi') \in \E''=\E$ and the lemma is proved.

\end{proof}

Let us define a pairing
\[
B_\D:  K_\D\times H_\D = K_\D \times K_{\D'} \rightarrow k^{\times},
\]
as follows. Let $(a,\chi) \in \D \cap \Gamma$, and
\begin{equation}
\label{Sarah's B}
   B_\D(g^{-1}ag,h) := \frac{\chi(ghg^{-1})}
                    {\deg \chi},
\end{equation}
for all $g \in G, h \in H_{\D}$.
Let $(b,\chi')\in \D' \cap \Gamma$, so that
$(b,\chi')$ centralizes $(a,\chi)$.
Then
\begin{equation}\label{alternative BD}
   \frac{\chi(gbg^{-1})}{\deg\chi} = \left(\frac{\chi'(g^{-1}ag)}
   {\deg\chi '}\right)^{-1}
\end{equation}
for all $g\in G$.
The equation (\ref{alternative BD}) shows that the pairing does not depend on the choice of $\chi$.
By its definition, $B_{\mathcal D}$ is $G$-invariant.
Also note that 
$$
  B_\D(kk',h) = B_\D(k,h) B_\D(k',h) \ \ \ \mbox{ and } \ \ \
   B_\D(k,hh')=B_\D(k,h) B_\D(k,h')
$$
for all $k,k'\in K_\D$ and $h,h'\in H_\D =K_{\D'}$; 
the latter equality is immediate from the definition of $B_{\mathcal D}$,
while the former follows from equation (\ref{alternative BD}) which gives an alternative
definition of $B_{\mathcal D}$.

Thus, to every fusion subcategory $\D\subseteq \C$ we associate
a canonical triple $(K_\D,\, H_\D,\, B_\D)$, where $K_\D$ and $H_\D$
are normal subgroups of $G$ centralizing each other and
$B_\D: K_\D \times H_\D \to k^\times$ is a $G$-invariant bicharacter.
\end{subsection}

\begin{subsection}{Construction of a fusion subcategory of $\Rep(D(G))$}
\label{triple --> D}
 
Suppose we have two normal subgroups $K$ and $H$ of $G$ that
centralize each other, and a $G$-invariant bicharacter $B: K\times H
\rightarrow k^{\times}$.
Define
\begin{equation}
\label{definition of S(K,H,B)}
\begin{split}
\S{(K,H,B)} := 
&\text{ full Abelian subcategory of } \C  \text{ generated by } \\
&\left\{(a, \, \chi) \in \Gamma \,\, \vline \,
\begin{tabular}{l}
$a \in K \cap R \text{ and } \chi \text{ is an irreducible character of } C_G(a)$ \\
$\text{ such that } \chi(h) = B(a, \, h) \,\deg \chi, \text{ for all } h \in H$
\end{tabular}
\right\}.
\end{split}
\end{equation}

\begin{remark}
\label{S as bundles} 
\begin{enumerate}
\item[(i)] As we mentioned in Remark~\ref{C = bundles}, $\Rep(D(G))$ is identified
with the category of $G$-equivariant bundles on $G$. The above subcategory
$\S{(K,H,B)}$ is identified  with the subcategory of bundles 
supported on $K$
whose $G$-equivariant structure restricts to $H$ as follows: The action of $h\in H$
on the fiber corresponding to  $a\in K$ is the scalar multiplication by $B(a,\, h)$. 
That $K$ is indeed the support of $\S{(K,H,B)}$ follows from the next lemma.\\
\item[(ii)] Note that $\S(G,\{e\},1) = \Rep(D(G))$, while $\S(\{e\},G,1)$
is the trivial fusion subcategory of $\Rep(D(G))$.
\end{enumerate}
\end{remark}

The following lemma was proved in \cite[Lemma 3.2]{NN}.

\begin{lemma}
\label{FR}
Let $E$ be a normal subgroup of a finite group $F$. 
Let $\Irr(F)$ denote the set of irreducible characters of $F$.
Let $\rho$ be an $F$-invariant character of $E$ of degree 1. Then
$$
\sum_{\chi \in \Irr(F) : \chi|_E = (\deg \chi) \, \rho} 
(\deg \chi)^2 = [F:E].
$$
\end{lemma}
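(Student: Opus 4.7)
The plan is to realize the sum as the degree of an induced character. Since $\rho$ is $F$-invariant and linear, Clifford's theorem guarantees that for any irreducible character $\chi$ of $F$, if $\rho$ is a constituent of $\chi|_E$ then $\chi|_E$ is a sum of $F$-conjugates of $\rho$, which all coincide with $\rho$ itself. Therefore $\chi|_E = m\rho$ for some integer $m$, and comparing degrees forces $m = \deg\chi$. Hence the indexing set of the sum is precisely the set of irreducible constituents of $\chi$ satisfying $\langle \chi|_E,\rho\rangle_E \neq 0$.

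Next I would apply Frobenius reciprocity: for any such $\chi$,
\[
\langle \chi,\, \Ind_E^F \rho\rangle_F \;=\; \langle \chi|_E,\, \rho\rangle_E \;=\; \deg\chi,
\]
so $\chi$ occurs in $\Ind_E^F\rho$ with multiplicity exactly $\deg\chi$. Combining with the previous step yields the decomposition
\[
\Ind_E^F \rho \;=\; \sum_{\chi \in \Irr(F),\ \chi|_E = (\deg\chi)\rho} (\deg\chi)\,\chi.
\]

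Finally, taking degrees of both sides gives
\[
[F:E]\cdot \deg\rho \;=\; \sum_{\chi \in \Irr(F),\ \chi|_E = (\deg\chi)\rho} (\deg\chi)^2,
\]
and since $\deg\rho = 1$ this is the desired identity. There is no real obstacle here; the only subtle point is confirming that the condition $\chi|_E = (\deg\chi)\rho$ is equivalent to $\chi$ being a constituent of $\Ind_E^F\rho$, which is exactly where $F$-invariance and linearity of $\rho$ are used.
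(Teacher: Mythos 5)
Your proof is correct: identifying the summation set with the irreducible constituents of $\Ind_E^F\rho$ via Clifford's theorem and $F$-invariance, computing the multiplicities by Frobenius reciprocity, and comparing degrees is exactly the Clifford-theory-plus-Frobenius-reciprocity argument the paper relies on (it cites \cite[Lemma 3.2]{NN} rather than reproving it, and the projective analogue is explicitly attributed to those two tools). The only slip is a harmless misstatement where you write ``irreducible constituents of $\chi$'' when you mean the irreducible characters of $F$ containing $\rho$ in their restriction, i.e.\ the constituents of $\Ind_E^F\rho$.
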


Recall that by the dimension of a full  Abelian subcategory of $\C$ we 
mean the sum of  squares of dimensions of its simple objects. 

\begin{lemma}
\label{dim S(K,H,B)}
The dimension of the subcategory $\S(K,\, H,\, B)$ is $|K| [G:H]$.
\end{lemma}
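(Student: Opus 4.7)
The plan is to compute $\dim(\S(K,H,B))$ directly as the sum of squared dimensions of its simple objects and reduce the inner sum to Lemma~\ref{FR}. Using the formula \eqref{d} for categorical dimensions, I would write
\begin{equation*}
\dim(\S(K,H,B)) = \sum_{a\in K\cap R}\ \sum_{\chi}\, d(a,\chi)^2
= \sum_{a\in K\cap R} \frac{|G|^2}{|C_G(a)|^2} \sum_{\chi}(\deg\chi)^2,
\end{equation*}
where the inner sum runs over irreducible characters $\chi$ of $C_G(a)$ with $\chi(h) = B(a,h)\,\deg\chi$ for all $h\in H$.

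Next I would set up the hypotheses of Lemma~\ref{FR} with $F=C_G(a)$, $E=H$, and $\rho = B(a,-)|_H$. Since $H$ and $K$ centralize each other and $a\in K$, we have $H\subseteq C_G(a)$; normality of $H$ in $G$ forces $H$ to be normal in $C_G(a)$. The $G$-invariance of $B$ gives $B(a,ghg^{-1}) = B(gag^{-1},ghg^{-1})\cdot\,\text{something}$... more cleanly, for $g\in C_G(a)$ we have $B(a,ghg^{-1}) = B(gag^{-1}, ghg^{-1}) = B(a,h)$, so $\rho$ is $C_G(a)$-invariant. The condition defining the inner sum is exactly $\chi|_H = (\deg\chi)\,\rho$, so Lemma~\ref{FR} yields $\sum_{\chi}(\deg\chi)^2 = [C_G(a):H]$.

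Substituting back gives
\begin{equation*}
\dim(\S(K,H,B)) = \sum_{a\in K\cap R}\frac{|G|^2}{|C_G(a)|^2}\cdot\frac{|C_G(a)|}{|H|}
= \frac{|G|}{|H|}\sum_{a\in K\cap R}\frac{|G|}{|C_G(a)|}
= \frac{|G|}{|H|}\sum_{a\in K\cap R}|K_a|.
\end{equation*}
Since $K$ is normal in $G$ it is a union of $G$-conjugacy classes, so $\{K_a : a\in K\cap R\}$ partitions $K$ and $\sum_{a\in K\cap R}|K_a| = |K|$. This produces $\dim(\S(K,H,B)) = |K|\,[G:H]$, as claimed.

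There is no serious obstacle here; the only subtlety is checking that the character $B(a,-)|_H$ is $C_G(a)$-invariant so that Lemma~\ref{FR} applies, and this is a one-line consequence of the $G$-invariance of $B$ together with $g\in C_G(a)\Rightarrow gag^{-1}=a$.
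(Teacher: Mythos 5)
Your proof is correct and follows essentially the same route as the paper: sum the squared dimensions $|K_a|^2(\deg\chi)^2$ over simple objects, verify that $B(a,\cdot)|_H$ is a $C_G(a)$-invariant degree-one character so that Lemma~\ref{FR} gives $[C_G(a):H]$ for the inner sum, and then use $\sum_{a\in K\cap R}|K_a|=|K|$. The invariance check you spell out is exactly the observation the paper makes in one line.
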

\begin{proof}
We compute
\begin{equation*}
\begin{split}
\dim(\S(K,H,B))  
&= \sum_{(a,\chi) \in \S(K,H,B) \cap \Gamma} \dim((a,\chi))^2 \\
&= \sum_{a \in K \cap R} |K_a|^2 \sum_{\chi :
(a, \, \chi) \in \S(K,H,B) \cap \Gamma} (\deg \chi)^2\\
&= \sum_{a \in K \cap R} |K_a|^2 [C_G(a):H] = |K|[G:H],
\end{split}
\end{equation*}
where the third equality above is explained as follows.
Fix $a \in K \cap R$ and 
observe that $B(a,\cdot)$ is a $C_G(a)$-invariant
character of $H$ of degree $1$ and then apply Lemma \ref{FR}.
\end{proof}

Given a bicharacter $B: K\times H \to k^\times$ let us define
$B^{\op}: H\times K \to k^\times$ by $B^{\op}(h,\,k) = B(k,\,h)$
for all $k\in K,\, h\in H$.

\begin{lemma}
\label{centralizer of S(K,H,B)}
$\S(K,H,B)$ is a fusion subcategory of $\C$
and  $\S(K,H,B)' = \S(H,K,(B^{\op})^{-1})$.
\end{lemma}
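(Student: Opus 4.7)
The plan is to invoke Lemma \ref{abelian subcat. is fusion} with $\A = \S(K,H,B)$ and $\B = \S(H,K,(B^{\op})^{-1})$. This reduces both assertions of the statement to two ingredients: (a) a dimension identity $\dim(\A)\cdot \dim(\B)=\dim(\C)=|G|^2$, and (b) the fact that $\A$ and $\B$ centralize each other. Once both are established, the lemma immediately yields that $\S(K,H,B)$ is a fusion subcategory and that its centralizer is $\S(H,K,(B^{\op})^{-1})$.

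For (a), Lemma \ref{dim S(K,H,B)} gives $\dim(\S(K,H,B)) = |K|[G:H]$, and the same lemma applied symmetrically (noting that $(B^{\op})^{-1}$ is a $G$-invariant bicharacter on $H\times K$) gives $\dim(\S(H,K,(B^{\op})^{-1})) = |H|[G:K]$. Their product is $|K|\cdot|H|\cdot [G:H]\cdot [G:K] = |G|^2$, which is exactly $\dim(\Rep(D(G)))$.

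For (b), pick simple objects $(a,\chi)\in \A\cap\Gamma$ with $a\in K\cap R$ and $(b,\chi')\in \B\cap\Gamma$ with $b\in H\cap R$, and verify both conditions of Lemma \ref{centralize}. Condition (i) is automatic: since $K$ and $H$ are normal and centralize each other, every element of $K_a\subseteq K$ commutes with every element of $K_b\subseteq H$. For condition (ii), note $gbg^{-1}\in H$ and $g^{-1}ag\in K$ for every $g\in G$, so the defining equations of $\A$ and $\B$ give
$$\chi(gbg^{-1})\,\chi'(g^{-1}ag) \;=\; B(a,gbg^{-1})\,B(g^{-1}ag,b)^{-1}\,\deg\chi\,\deg\chi'.$$
The $G$-invariance of $B$, $B(gkg^{-1},ghg^{-1})=B(k,h)$, applied with $k=g^{-1}ag$ and $h=b$, gives $B(a,gbg^{-1}) = B(g^{-1}ag,b)$, so the right-hand side collapses to $\deg\chi\,\deg\chi'$, as required.

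The step requiring the most care is the bookkeeping in (b): getting the $B^{\op}$ and inverse conventions to interact correctly with $G$-invariance, and applying the invariance identity with the correct conjugating element. Beyond that point, everything follows mechanically: Lemma \ref{abelian subcat. is fusion} simultaneously upgrades the two Abelian subcategories to fusion subcategories and identifies them as each other's centralizers, completing the proof.
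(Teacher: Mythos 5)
Your proposal is correct and follows essentially the same route as the paper: verify that $\S(K,H,B)$ and $\S(H,K,(B^{\op})^{-1})$ centralize each other via Lemma \ref{centralize} (with the $G$-invariance of $B$ collapsing condition (ii) exactly as you describe), combine with the dimension count from Lemma \ref{dim S(K,H,B)}, and conclude by Lemma \ref{abelian subcat. is fusion}. No gaps.
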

\begin{proof}
First, we show that the full Abelian subcategories 
$\S(K,H,B),\S(H,K,(B^{\op})^{-1}) \subseteq \C$
centralize each other. Let $(a,\chi) \in \S(K,H,B)$
and $(b,\chi') \in \S(H,K,(B^{\op})^{-1})$. Since
$K$ and $H$ centralize each other, in order to show that
$(a,\chi)$ and $(b,\chi')$ centralize each other, it only remains
to check that condition (ii) of Lemma \ref{centralize} holds. We have
$$
\frac{\chi(gbg^{-1})}{\deg \chi} \frac{\chi^\prime(g^{-1}ag)}{\deg \chi'}
=B(a,gbg^{-1}) (B^{\op})^{-1}(b,g^{-1}ag) = B(a,gbg^{-1})B(g^{-1}ag,b)^{-1} = 1,
$$
for all $g \in G$. The first equality above is by definition of
$\S(K,H,B)$ and $\S(H,K,(B^{\op})^{-1})$, while the last equality
is due to $G$-invariance of $B$.
Therefore, condition (ii) of Lemma \ref{centralize}
holds and it follows that $\S(K,H,B)$ and $\S(H,K,(B^{\op})^{-1})$
centralize each other.  

By Lemma~\ref{dim S(K,H,B)}, $\dim(\S(K,H,B)) \cdot \dim{(\S(H,K,(B^{\op})^{-1})}
= |K| [G: H] \cdot |H|[G:K]  = |G|^2 = \dim(\C)$. Since $\S(K,H,B)$ and 
$\S(H,K,(B^{\op})^{-1})$ centralize each other it follows from 
Lemma \ref{abelian subcat. is fusion} that $\S(K,H,B)$ is a fusion 
subcategory and $\S(K,H,B)' = \S(H,K,(B^{\op})^{-1})$.
\end{proof}

\begin{theorem}
\label{bijection}
The assignments $\D \mapsto (K_\D,H_\D,B_\D)$  and
$(K,H,B) \mapsto \S(K,H,B)$ are inverses of each other.
Thus, there is a bijection between the set of fusion subcategories
of $\C$ and triples $(K,H,B)$, where $K,H\subseteq G$ are
normal subgroups of $G$ centralizing each other and $B: K\times H\to k^\times$
is a $G$-invariant bicharacter.
\end{theorem}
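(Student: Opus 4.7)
The plan is to verify the two round trips separately, and the key technical input has already been packaged into Lemma \ref{centralizer of S(K,H,B)}, Proposition \ref{HD'=KD}, and Lemma \ref{noname}.

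First I would check $(K,H,B) \mapsto \S(K,H,B) \mapsto (K,H,B)$. Write $\S := \S(K,H,B)$. By construction every simple object of $\S$ is of the form $(a,\chi)$ with $a \in K \cap R$, so $K_{\S} \subseteq K$. For the reverse inclusion, for each $a \in K \cap R$ the character $B(a,\cdot)$ is a $C_G(a)$-invariant character of $H$ of degree one, so Lemma \ref{FR} forces the set of $\chi$ with $\chi|_H = B(a,\cdot)\deg\chi$ to have total squared dimension $[C_G(a):H] > 0$; hence every conjugacy class in $K$ contributes to $\S$ and $K_{\S} = K$. For the second component, combine Lemma \ref{centralizer of S(K,H,B)} (which gives $\S' = \S(H,K,(B^{\op})^{-1})$) with Proposition \ref{HD'=KD} to conclude $H_{\S} = K_{\S'} = H$. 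Finally, for any simple $(a,\chi) \in \S \cap \Gamma$ and $h \in H$ the defining condition $\chi(h) = B(a,h)\deg\chi$ is exactly the statement $B_{\S}(a,h) = B(a,h)$, so $B_{\S} = B$ on $K \times H$.

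Next I would check $\D \mapsto (K_\D,H_\D,B_\D) \mapsto \S(K_\D,H_\D,B_\D) = \D$. Write $\S := \S(K_\D,H_\D,B_\D)$. For the inclusion $\D \subseteq \S$, take a simple object $(a,\chi) \in \D \cap \Gamma$ with $a \in R$; by definition $a \in K_\D$, and the definition of $B_\D$ with $g=e$ gives $\chi(h)/\deg\chi = B_\D(a,h)$ for all $h \in H_\D$, which is precisely the membership condition for $\S$. For the reverse inclusion, let $(a,\chi') \in \S \cap \Gamma$, so $a \in K_\D \cap R$ and $\chi'(h) = B_\D(a,h)\deg\chi'$ for all $h \in H_\D$. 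Since $a \in K_\D$, there exists at least one irreducible character $\chi$ of $C_G(a)$ with $(a,\chi) \in \D$. Recalling that $K_{\D'} = H_\D$ by Proposition \ref{HD'=KD}, both $\chi|_{H_\D}/\deg\chi$ and $\chi'|_{H_\D}/\deg\chi'$ equal $B_\D(a,\cdot)$ on $H_\D$, so they agree. Lemma \ref{noname} then gives $(a,\chi') \in \D$, finishing the argument.

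The part I expect to require the most care is the reverse inclusion in the last paragraph: at first glance it is not obvious why the single ``coarse'' invariant $B_\D$ is enough to detect every simple object compatible with it. That is precisely the role of Lemma \ref{noname}, which lifts the bicharacter equality to simple-object membership via the double centralizer identity $\D'' = \D$. Once that lemma is invoked, the rest of the proof is a matter of matching definitions, and the preceding computation of dimensions (Lemma \ref{dim S(K,H,B)}) together with Lemma \ref{abelian subcat. is fusion} guarantees there is no room for a larger subcategory to sneak in.
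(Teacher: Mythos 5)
Your proposal is correct, and it uses the same key ingredients as the paper (Lemma \ref{FR}, Lemma \ref{centralizer of S(K,H,B)}, Proposition \ref{HD'=KD}, and Lemma \ref{noname}), but it concludes the second round trip differently. The paper proves only the inclusion $\S(K_\D,H_\D,B_\D)\subseteq \D$ via Lemma \ref{noname}, proves the analogous inclusion $\S(H_\D,K_\D,B_{\D'})\subseteq \D'$, and then forces equality by the dimension count $\dim(\S(K_\D,H_\D,B_\D))\cdot\dim(\S(H_\D,K_\D,B_{\D'}))=|G|^2=\dim(\D)\cdot\dim(\D')$ from Lemma \ref{dim S(K,H,B)} and \eqref{dimension}. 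You instead prove the reverse inclusion $\D\subseteq \S(K_\D,H_\D,B_\D)$ directly: since $B_\D$ is independent of the choice of $\chi$ (equation \eqref{alternative BD}), every simple $(a,\chi)\in\D\cap\Gamma$ satisfies $\chi(h)=B_\D(a,h)\deg\chi$ for all $h\in H_\D$, which is exactly the membership condition. This is valid and slightly more economical, since it removes the need for the dimension argument at that step (your closing remark about Lemma \ref{dim S(K,H,B)} and Lemma \ref{abelian subcat. is fusion} is then only needed where it already enters, namely inside Lemma \ref{centralizer of S(K,H,B)}). Two small points to tighten: in the first round trip, the computation $B_{\S}(a,h)=B(a,h)$ is obtained only for $a\in K\cap R$, so you should add that $G$-invariance of both bicharacters extends the equality to all of $K\times H$ (the paper says this explicitly); and in the second round trip, the existence of some $\chi$ with $(a,\chi)\in\D$ for $a\in K_\D\cap R$ uses that $a$ is itself the chosen representative of its conjugacy class, which is worth a word since $K_\D$ is defined as a union of conjugates.
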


\begin{proof}
First, we show that $(K_{\S(K,H,B)},H_{\S(K,H,B)},B_{\S(K,H,B)})
= (K,H,B)$. As mentioned in Remark \ref{S as bundles}, it follows from Lemma \ref{FR}
that  $K_{\S(K,H,B)} = K$.
By  Lemma~\ref{centralizer of S(K,H,B)},
$H_{\S(K,H,B)}$ is the support of  $\S(K,H,B)'$ and so
$H_{\S(K,H,B)} = H$.
For any $a \in K \cap R, h \in H$, we have $B_{\S(K,H,B)}(a, h)
= \frac{\chi(h)}{\deg \chi}$, where $(a,\chi) \in \S(K,H,B)\cap \Gamma$.
Since $(a,\chi) \in \S(K,H,B)$, we have $\chi(h) = B(a,h) \deg \chi$.
Therefore, $B_{\S(K,H,B)}(a,h) = B(a,h)$ and the $G$-invariance of the
two bicharacters in question imply that $B_{\S(K,H,B)}=B$.

Second, we show that $\S(K_\D,H_\D,B_\D)=\D$. Pick any $(a,\chi) \in \S(K_\D,H_\D,B_\D)$.
Then for all $h\in H_\D$, 
$$
\frac{\chi(h)}{\deg \chi}=B_\D(a,h) = \frac{\chi'(h)}{\deg \chi'},
$$
where $(a,\chi') \in \D$. Here the first equality follows from
the definition of $\S(K_\D,H_\D,B_\D)$ \eqref{definition of S(K,H,B)}, 
while the second equality follows from the definition of $B_\D$ \eqref{Sarah's B}. 
Therefore,  $\frac{\chi|_{K_{\D'}}}{\deg \chi} = \frac{\chi'|_{K_{\D'}}}{\deg \chi'}$
and it follows from Lemma \ref{noname} that $(a,\chi) \in \D$.
So, $\S(K_\D,H_\D,B_\D) \subseteq \D$. Similarly, 
$\S(K_{\D'},H_{\D'},B_{\D'}) = \S(H_{\D}, K_\D, B_{\D'}) \subseteq \D'$. 
Using  Lemma \ref{dim S(K,H,B)} we have
\[
\dim(\C) = |G|^2 =   \dim(\S(K_\D,H_\D,B_\D)) \cdot 
\dim(\S(H_{\D}, K_\D, B_{\D'})) \leq \dim(\D)\dim(\D') =\dim(\C). 
\]
Hence, $\dim(\S(K_\D,H_\D,B_\D))= \dim(\D)$ and so $\S(K_\D,H_\D,B_\D)=\D$.
\end{proof}

\end{subsection}

\end{section}

\begin{section}
{Some invariants and the lattice of fusion subcategories of $\Rep(D(G))$}

\begin{subsection}{The lattice}\label{sec:lattices}
Let $G$ be a finite group. Let $K, H, K'$, and $H'$ be normal subgroups of $G$ 
such that $K$ and $H$ centralize each other and $K'$ and $H'$ centralize
each other. Let $B:K \times H : \to k^\times$  and
$B':K' \times H' : \to k^\times$ be  $G$-invariant bicharacters.

\begin{proposition}
\label{prop:subcat}
$\S(K,H,B) \subseteq \S(K',H',B')$ if and only if 
$K \subseteq K', \, H' \subseteq H$, and $B|_{K \times H'}
= B'|_{K \times H'}$.
\end{proposition}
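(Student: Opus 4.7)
The plan is to prove both implications by relating the triple $(K,H,B)$ to the canonical data of $\S(K,H,B)$ developed in Section~\ref{D --> triple}, together with the centralizer formula in Lemma~\ref{centralizer of S(K,H,B)}.

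For the ``only if'' direction, I would assume $\S(K,H,B) \subseteq \S(K',H',B')$ and read off the three required conditions one at a time. The support of $\S(K,H,B)$ is $K$ by Remark~\ref{S as bundles}, and similarly the support of $\S(K',H',B')$ is $K'$, so the inclusion immediately gives $K \subseteq K'$. Next I would take centralizers, reversing the inclusion; by Lemma~\ref{centralizer of S(K,H,B)} these centralizers are $\S(H,K,(B^{\op})^{-1})$ and $\S(H',K',(B'^{\op})^{-1})$, whose supports are $H$ and $H'$ respectively, yielding $H' \subseteq H$. For the bicharacter condition I would fix $a \in K \cap R$ and invoke Lemma~\ref{FR} to produce an irreducible character $\chi$ of $C_G(a)$ with $\chi|_H = (\deg\chi)\, B(a,\cdot)$; then $(a,\chi) \in \S(K,H,B) \subseteq \S(K',H',B')$ forces $\chi(h') = B'(a,h')\deg\chi$ for every $h' \in H'$, and comparing with the known value $\chi(h') = B(a,h')\deg\chi$ yields $B(a,h') = B'(a,h')$ for all $h' \in H'$. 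I would then use the $G$-invariance of both bicharacters together with the normality of $H'$ in $G$ to promote this equality from representatives $a \in K \cap R$ to all of $K$.

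For the ``if'' direction, I would assume the three conditions and verify directly that every simple object $(a,\chi) \in \S(K,H,B) \cap \Gamma$ satisfies the defining condition of $\S(K',H',B')$. Here $a \in K \cap R \subseteq K' \cap R$, and for every $h' \in H' \subseteq H$,
\[
\chi(h') \;=\; B(a,h')\deg\chi \;=\; B'(a,h')\deg\chi,
\]
which is exactly the condition required for $(a,\chi)$ to lie in $\S(K',H',B')$, so the inclusion of full abelian subcategories follows.

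I do not expect a serious obstacle. The one delicate point is guaranteeing, in the forward direction, that a simple object $(a,\chi)$ realizing the prescribed restriction to $H$ actually exists for each $a \in K \cap R$, which is precisely the content of Lemma~\ref{FR} applied to the $C_G(a)$-invariant linear character $B(a,\cdot)$ of $H$. Everything else is bookkeeping with supports, centralizers, and $G$-invariance.
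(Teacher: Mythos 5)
Your proof is correct and is essentially the paper's argument written out in full: the paper simply asserts that the statement follows from the definitions together with the bundle description of $\S(K,H,B)$ in Remark~\ref{S as bundles}, and that is exactly the support/character bookkeeping you carry out, with Lemma~\ref{FR} guaranteeing the needed simple objects and $G$-invariance plus normality of $H'$ extending the equality of bicharacters from representatives in $K\cap R$ to all of $K$. Your detour through centralizers to obtain $H'\subseteq H$ is legitimate (Lemma~\ref{centralizer of S(K,H,B)} is established before this proposition), though it can be shortcut by looking at the simples $(e,\chi)$, i.e., using $\S(K,H,B)\cap\Rep(G)=\Rep(G/H)$.
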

\begin{proof}
The statement follows from the definitions and Remark \ref{S as bundles}.
\end{proof}

Define a homomorphism 
$$
\varphi_{B,B'}:K \cap K' \to \widehat{H \cap H'} \ \ \ 
  \mbox{ by } \ \ \ a \mapsto 
B(a, \cdot)|_{H \cap H'} (B')^{-1}(a, \cdot)|_{H \cap H'},
$$
where $\widehat{H\cap H'}$ denotes the group of group homomorphisms
from $H\cap H'$ to $k^{\times}$.
Also, define a $G$-invariant bicharacter
$$
\psi_{B,B'}: \ker \varphi_{B,B'} \times HH' \to k^\times  \ \ \ \mbox{ by } \ \ \ 
(a, hh') \mapsto B(a,h)B'(a,h').
$$

\begin{proposition}
\label{intersection and join}
\begin{enumerate}
\item [(i)] $\S(K,H,B) \cap \S(K',H',B') = \S(\ker \varphi_{B,B'}, HH', \psi_{B,B'})$.
\item [(ii)] $\S(K,H,B) \vee \S(K',H',B')= \S(KK', \ker \varphi_{B^{\op}, (B')^{\op}},
(\psi_{B^{\op}, (B')^{\op}})^{\op})$.
\end{enumerate}
\end{proposition}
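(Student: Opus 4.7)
The plan for (i) is to compare the simple objects on the two sides directly. Both are fusion subcategories (the intersection is the full abelian subcategory on the common simple objects, which is evidently closed under tensor product and duality, and the right-hand side is fusion by Lemma \ref{centralizer of S(K,H,B)}), so it suffices to show that the sets of simple objects agree. A simple object $(a,\chi)\in\Gamma$ lies in $\S(K,H,B)\cap\S(K',H',B')$ exactly when $a\in (K\cap K')\cap R$ and $\chi$ restricts to $B(a,\cdot)\deg\chi$ on $H$ and to $B'(a,\cdot)\deg\chi$ on $H'$. These two restrictions are compatible on $H\cap H'$ precisely when $B(a,h)=B'(a,h)$ for all $h\in H\cap H'$, i.e.\ $a\in\ker\varphi_{B,B'}$; granted this, the two scalar actions patch uniquely to a scalar action of $HH'$ given by $\psi_{B,B'}(a,\cdot)$. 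The only substantive point is that $a\in\ker\varphi_{B,B'}$ is exactly what makes $\psi_{B,B'}(a,\cdot)$ well-defined on the product $HH'$, which I would record as a short preliminary check (if $hh'=h_1h_1'$ then $h_1^{-1}h=h_1'(h')^{-1}\in H\cap H'$, and both sides of the defining formula differ by $\varphi_{B,B'}(a)$ applied to this element).

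For (ii) the plan is to reduce to (i) by centralizing twice. In any modular category one has $\D\vee\E=(\D'\cap\E')'$; this is an immediate consequence of M\"uger's identity $\D''=\D$ together with the order-reversing property of $\D\mapsto\D'$. Using Lemma \ref{centralizer of S(K,H,B)}, $\S(K,H,B)'=\S(H,K,(B^{\op})^{-1})$ and likewise for the primed triple; part (i) then expresses their intersection as $\S(\ker\varphi_{B^{\op},(B')^{\op}},\,KK',\,\widetilde\psi)$, where $\widetilde\psi$ denotes the analogue of $\psi$ built from $(B^{\op})^{-1}$ and $((B')^{\op})^{-1}$ (note that inverting the input bicharacters does not alter the kernel of $\varphi$). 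A final centralizer via Lemma \ref{centralizer of S(K,H,B)} produces the triple $(KK',\,\ker\varphi_{B^{\op},(B')^{\op}},\,(\widetilde\psi^{\op})^{-1})$, and I would close by unwinding definitions to verify $(\widetilde\psi^{\op})^{-1}=(\psi_{B^{\op},(B')^{\op}})^{\op}$: both pairings send $(kk',h)\in KK'\times\ker\varphi_{B^{\op},(B')^{\op}}$ to $B(k,h)B'(k',h)$.

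The real substance sits in part (i), in the well-definedness of $\psi_{B,B'}$; once that is in hand, part (ii) becomes a bookkeeping exercise. The main obstacle I anticipate is keeping the various ``$\op$'' operators and inverses aligned through the double centralization in part (ii)---it would be easy to pick up a stray inverse and land on $(\psi_{B^{\op},(B')^{\op}})^{-1}$ or on $\psi_{(B^{\op})^{-1},((B')^{\op})^{-1}}$ rather than on the claimed $(\psi_{B^{\op},(B')^{\op}})^{\op}$, so I would carry out that final verification carefully on the level of values.
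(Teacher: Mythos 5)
Your proposal is correct and follows essentially the same route as the paper: part (i) by matching simple objects $(a,\chi)$ on both sides (the restriction conditions on $H$ and $H'$ force $a\in\ker\varphi_{B,B'}$ and patch to the $\psi_{B,B'}$ condition on $HH'$, and conversely), and part (ii) by the identity $\D\vee\E=(\D'\cap\E')'$ together with Lemma \ref{centralizer of S(K,H,B)}, part (i), and a final value-level check that $((\psi_{(B^{\op})^{-1},((B')^{\op})^{-1}})^{\op})^{-1}=(\psi_{B^{\op},(B')^{\op}})^{\op}$. Your explicit well-definedness check for $\psi_{B,B'}$ is a harmless addition (the paper only spells this out in the twisted case), and your bookkeeping of the $\op$'s and inverses lands on the same triple as the paper's direct calculation.
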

\begin{proof}
(i) First, we show that $\S(K,H,B) \cap \S(K',H',B') \subseteq
\S(\ker \varphi_{B,B'}, HH', \psi_{B,B'})$. Let $(a, \chi)$
be any simple object of $\S(K,H,B) \cap \S(K',H',B')$.
By definition, $a \in K \cap K' \cap R$ and $\chi$ is an
irreducible character of $C_G(a)$ such that 
$$
\chi|_H = B(a, \cdot) \deg \chi \text{ and } \chi|_{H'} = 
B'(a, \cdot) \deg \chi.
$$
It follows that 
$B(a, \cdot)|_{H \cap H'} = B'(a, \cdot)|_{H \cap H'}$,
so that $a \in \ker \varphi_{B,B'}$. Also,  
$$
\chi(hh')
= B(a,h)B'(a,h') \deg \chi = \psi_{B,B'}(a, hh') \deg \chi,
$$ 
for all $h \in H, h' \in H'$. Therefore, 
$(a, \chi) \in \S(\ker \varphi_{B,B'}, HH', \psi_{B,B'})$,
so $\S(K,H,B) \cap \S(K',H',B') \subseteq
\S(\ker \varphi_{B,B'}, HH', \psi_{B,B'})$.

Next, we show that $\S(\ker \varphi_{B,B'}, HH', \psi_{B,B'})
\subseteq \S(K,H,B) \cap \S(K',H',B')$. Let $(a, \chi)$
be a simple object in $\S(\ker \varphi_{B,B'}, HH', \psi_{B,B'})$.
By definition, $a \in \ker \varphi_{B,B'} \cap R
\subseteq K \cap K' \cap R$ and $\chi$ is an
irreducible character of $C_G(a)$ such that 
$$
\chi|_{HH'} = \psi_{B,B'}(a, \cdot) \deg \chi.
$$
If $h \in H$, this implies 
$$
\chi(h) = \psi_{B,B'}(a, h) \deg \chi = B(a,h) B'(a,e) \deg \chi = B(a,h) \deg \chi.
$$
Similarly, if $h' \in H'$, then $\chi(h') = B'(a,h') \deg \chi$.
By definition,  $(a, \chi) \in \S(K,H,B) \cap \S(K',H',B')$,
so $\S(\ker \varphi_{B,B'}, HH', \psi_{B,B'})
\subseteq \S(K,H,B) \cap \S(K',H',B')$ and the proposition is proved.

(ii) We have
\begin{equation*}
\begin{split}
\S(K,H,B) \vee \S(K',H',B')
&= \left(\S(K,H,B)' \cap \S(K',H',B')' \right)'\\
&= \left(\S(H,K,(B^{\op})^{-1}) \cap \S(H',K',((B')^{\op})^{-1}) \right)'\\
&= \S\left(\ker \varphi_{(B^{\op})^{-1}, ((B')^{\op})^{-1}}, KK', 
\psi_{(B^{\op})^{-1}, ((B')^{\op})^{-1}} \right)'\\
&= \S\left( KK', \ker \varphi_{(B^{\op})^{-1}, ((B')^{\op})^{-1}},
((\psi_{(B^{\op})^{-1}, ((B')^{\op})^{-1}})^{\op})^{-1} \right)\\       
&= \S\left( KK', \ker \varphi_{B^{\op}, (B')^{\op})},
(\psi_{B^{\op}, (B')^{\op}})^{\op} \right).
\end{split}
\end{equation*}
The first equality above is due to \cite[Lemma 2.8 and Theorem 3.2(i)]{M1},
the last equality follows from a direct calculation, and the other
equalities follow from either Lemma \ref{centralizer of S(K,H,B)} or
part (i).
\end{proof}

A braided tensor category $\C$ is said to be {\em symmetric} if the square 
of the braiding is the identity \cite{BK}.
A fusion subcategory $\D$ of a premodular category $\C$ with
positive categorical dimensions is said to be {\em isotropic} 
if the twist of $\C$ restricts to the
identity on $\D$ \cite{DGNO}. By Deligne's theorem \cite{D}
an isotropic subcategory $\D\subset \C$ is
{\em Tannakian}, i.e., it is equivalent to 
the representation category of a finite group as a symmetric
fusion category. An isotropic subcategory $\D\subseteq \C$
is said to be {\em Lagrangian} if $\D'=\D$, or
equivalently $(\dim(\D))^2 = \dim(\C)$. 
Recall that a bicharacter $f:L \times L \to k^\times$ on an abelian 
group $L$ is {\em alternating} if $f(x,x)=1$, for all $x \in L$.

\begin{proposition}\label{prop:sil}
The fusion subcategory $\S(K,H,B) \subseteq \Rep(D(G))$ is
\begin{enumerate}
\item[(i)] symmetric if and only if $K \subseteq H$ and
$B(k_1, k_2) B(k_2, k_1) =1$, for all $k_1, k_2 \in K$,
\item[(ii)] isotropic if and only if $K \subseteq H$ and
$B|_{K \times K}$ is alternating,
\item[(iii)] Lagrangian if and only if $K=H$ and $B$ is alternating.
\end{enumerate}
\end{proposition}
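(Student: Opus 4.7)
My plan is to reduce each of the three statements to the inclusion criterion from Proposition~\ref{prop:subcat} combined with the centralizer formula $\S(K,H,B)' = \S(H,K,(B^{\op})^{-1})$ of Lemma~\ref{centralizer of S(K,H,B)} and the twist formula $\theta(a,\chi) = \chi(a)/\deg\chi$. For (i), symmetry is equivalent to $\S(K,H,B) \subseteq \S(K,H,B)' = \S(H,K,(B^{\op})^{-1})$; Proposition~\ref{prop:subcat} then says this holds iff $K \subseteq H$ and $B|_{K \times K} = (B^{\op})^{-1}|_{K \times K}$, the latter being exactly $B(k_1,k_2)B(k_2,k_1) = 1$ for all $k_1, k_2 \in K$.

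For (ii), trivial twist on a fusion subcategory automatically forces it to be symmetric, by applying the ribbon identity $\theta_{X\otimes Y} = (\theta_X\otimes \theta_Y)\circ c_{Y,X}\circ c_{X,Y}$ to simple objects $X, Y$ of $\S(K,H,B)$. Hence the conclusions of (i) hold and $K \subseteq H$. Then any $a \in K$ lies in $H$, and for a simple $(a,\chi) \in \S(K,H,B)$ the relation $\chi|_H = B(a,\cdot)\deg\chi$ forces $\chi(a) = B(a,a)\deg\chi$, so $\theta(a,\chi) = B(a,a)$. For each $a \in K\cap R$, Lemma~\ref{FR} applied with $F = C_G(a)$, $E = H$, and $\rho = B(a,\cdot)|_H$ guarantees at least one such $\chi$ exists (the relevant sum $[C_G(a):H]$ is positive), so trivial twist forces $B(a,a) = 1$ on $K \cap R$, and $G$-invariance of $B$ extends this to all of $K$. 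Conversely, if $B|_{K\times K}$ is alternating, expanding $B(k_1 k_2, k_1 k_2) = 1$ yields $B(k_1,k_2)B(k_2,k_1) = 1$, so (i) produces symmetry and the twist computation above gives triviality.

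For (iii), Lagrangian means isotropic together with $\S(K,H,B)' = \S(K,H,B)$. The dimension identity $(\dim \S(K,H,B))^2 = \dim\C = |G|^2$, together with Lemma~\ref{dim S(K,H,B)}, gives $|K|[G:H] = |G|$, i.e., $|K| = |H|$, which combined with $K \subseteq H$ from (ii) forces $K = H$. The alternating condition on $B$ is inherited from the isotropic hypothesis, and as observed above it already implies $B|_{K\times K} = (B^{\op})^{-1}|_{K\times K}$, so the self-centralizing equality is automatic. The only real subtlety is the existence step in (ii), supplying enough simple objects $(a,\chi)$ to realize every $a \in K$ in the twist computation; this is exactly the content of Lemma~\ref{FR}.
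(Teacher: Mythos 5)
Your argument for (i) and (ii) is correct and is essentially the paper's own proof: symmetry is tested via $\S(K,H,B)\subseteq\S(K,H,B)'=\S(H,K,(B^{\op})^{-1})$ together with Proposition~\ref{prop:subcat}, and the isotropic characterization comes from the twist formula $\theta(a,\chi)=\chi(a)/\deg\chi=B(a,a)$ (valid since $K\subseteq H$), the implication ``trivial twist $\Rightarrow$ symmetric'' via the balancing axiom (which the paper asserts without comment), the existence of a simple $(a,\chi)$ over each $a\in K\cap R$ guaranteed by Lemma~\ref{FR}, and $G$-invariance plus normality of $K$ to pass from $K\cap R$ to all of $K$. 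The only genuine divergence is part (iii): the paper simply cites \cite{NN}, whereas you give a short self-contained derivation, obtaining $K=H$ from $K\subseteq H$ together with $(\dim\S(K,H,B))^2=\dim\C$, i.e.\ $|K|[G:H]=|G|$, and obtaining $\S(K,H,B)'=\S(K,H,B)$ in the converse direction from the observation that an alternating bicharacter satisfies $B(k_1,k_2)B(k_2,k_1)=1$, so $(B^{\op})^{-1}=B$ when $K=H$. This alternative is correct and has the advantage of keeping the whole proposition internal to the machinery already developed (Lemmas~\ref{dim S(K,H,B)} and \ref{centralizer of S(K,H,B)}, Proposition~\ref{prop:subcat}), at the cost of a slightly longer write-up; the redundant appeal to symmetry in your sufficiency direction of (ii) is harmless, since trivial twist is all that the definition of isotropic requires.
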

\begin{proof}
(i) ${\mathcal S}(K,H,B)$ is symmetric if and only if
${\mathcal S}(K,H,B) \subseteq {\mathcal S}(K,H,B)'$.
By Lemma \ref{centralizer of S(K,H,B)} and Proposition \ref{prop:subcat}, 
this is true if and only if $K\subseteq H$ and $B|_{K\times K}
=(B^{\op})^{-1}|_{K\times K}$, as stated.

(ii) Suppose $K\subseteq H$ and $B|_{K\times K}$ is alternating.
In particular, this implies $B(a,a)=1$ for all $a\in K\cap R$,
so that
$$
  \theta(a,\chi) =\frac{\chi(a)}{\deg\chi} = B(a,a) =1
$$
for all $(a,\chi)\in {\mathcal S}(K,H,B)\cap\Gamma$.
That is, ${\mathcal S}(K,H,B)$ is isotropic.

Conversely, assume ${\mathcal S}(K,H,B)$ is isotropic, so in
particular $\S(K,H,B)$ is symmetric. Therefore, by (ii),
$K \subseteq H$ and $B(k_1,k_2) B(k_2, k_1) = 1$, for all 
$k_1,k_2 \in K$. It remains to show that $B(k,k) = 1$, for all
$k \in K$; to this end, pick any $(a, \chi) \in \S(K,H,B) \cap \Gamma$
and observe that
$$ B(a,a) = \frac{\chi(a)}{\deg \chi} = \theta(a,\chi) = 1, $$
where the last equality is due to the definition of isotropic. Normality
of $K$ and $G$-invariance of $B$ together imply that 
$B(k,k) = 1$, for all $k \in K$, as desired.

(iii) This was proved in \cite{NN}.
\end{proof}

Recall that the notions of M\"uger's center $\Z_2$, nondegenerate and prime
braided fusion categories were defined in Section~\ref{prelim: centralizers}.
Also, recall that a symmetric bicharacter $f:L \times L \to k^\times$ on a 
group $L$ is {\em nondegenerate} if $\{x \in L \mid f(x,y)=1 \; \mbox{ for all }y \in L\} = \{e\}$. 

\begin{proposition}
\label{prop:Mueger center of S(K,H,B)}
$\Z_2(\S(K,H,B)) = \S(\ker \varphi_{B,(B^{\op})^{-1}}, HK, \psi_{B,(B^{\op})^{-1}})$.
\end{proposition}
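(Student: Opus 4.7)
The plan is to combine two earlier results in this section: Lemma~\ref{centralizer of S(K,H,B)}, which identifies the centralizer of $\S(K,H,B)$ in $\Rep(D(G))$, and Proposition~\ref{intersection and join}(i), which computes intersections of subcategories of the form $\S(\cdot,\cdot,\cdot)$.

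First I would record the following general observation: if $\D$ is a braided fusion subcategory of a braided fusion category $\C$, then the M\"uger center $\Z_2(\D)$ of $\D$ (viewed as a braided fusion category in its own right) coincides with $\D \cap \D'$, where $\D'$ denotes the centralizer of $\D$ taken inside $\C$. Indeed, an object $X \in \D$ lies in $\Z_2(\D)$ iff it centralizes every object of $\D$, which means precisely that $X \in \D'$ in addition to lying in $\D$.

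Applied to $\D = \S(K,H,B) \subseteq \Rep(D(G))$, Lemma~\ref{centralizer of S(K,H,B)} gives $\S(K,H,B)' = \S(H,K,(B^{\op})^{-1})$, so
\[
\Z_2(\S(K,H,B)) \;=\; \S(K,H,B) \cap \S(H,K,(B^{\op})^{-1}).
\]
I would then invoke Proposition~\ref{intersection and join}(i) with the triples $(K_1,H_1,B_1) = (K,H,B)$ and $(K_2,H_2,B_2) = (H,K,(B^{\op})^{-1})$; the required hypotheses (two pairs of mutually centralizing normal subgroups, together with $G$-invariant bicharacters) are immediate. The intersection formula produces
\[
\S\bigl(\ker \varphi_{B,\,(B^{\op})^{-1}},\; HK,\; \psi_{B,\,(B^{\op})^{-1}}\bigr),
\]
which is exactly the desired expression, so no further identification of $\varphi$ or $\psi$ is needed.

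There is essentially no obstacle beyond an unwinding of notation: one just has to check that $K_1 \cap K_2 = K \cap H = H_1 \cap H_2$ and $H_1 H_2 = HK$ so that the formulas for $\varphi_{B_1,B_2}$ and $\psi_{B_1,B_2}$ specialize to $\varphi_{B,(B^{\op})^{-1}}$ and $\psi_{B,(B^{\op})^{-1}}$ respectively. This is a direct substitution into the definitions preceding Proposition~\ref{intersection and join}, and the proof is complete.
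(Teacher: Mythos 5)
Your proposal is correct and is essentially the paper's own argument: the paper likewise writes $\Z_2(\S(K,H,B)) = \S(K,H,B) \cap \S(K,H,B)'$, applies Lemma~\ref{centralizer of S(K,H,B)} to identify the centralizer as $\S(H,K,(B^{\op})^{-1})$, and then invokes Proposition~\ref{intersection and join}(i) to obtain $\S(\ker \varphi_{B,(B^{\op})^{-1}}, HK, \psi_{B,(B^{\op})^{-1}})$. The only difference is that you spell out the routine identification $\Z_2(\D)=\D\cap\D'$ and the substitution into the definitions of $\varphi$ and $\psi$, which the paper leaves implicit.
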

\begin{proof}
Using Lemma \ref{centralizer of S(K,H,B)} and Proposition \ref{intersection and join} (i), 
we have
\begin{equation*}
\begin{split}
\Z_2(\S(K,H,B))
&= \S(K,H,B) \cap \S(K,H,B)' \\
&= \S(K,H,B) \cap \S(H,K,(B^{\op})^{-1}) \\
&= \S(\ker \varphi_{B,(B^{\op})^{-1}}, HK, \psi_{B,(B^{\op})^{-1}}).
\end{split}
\end{equation*}
\end{proof}

\begin{proposition}
\label{S(K,H,B) is nondegenerate iff}
\begin{enumerate}
\item[(i)] The fusion subcategory $\S(K,H,B) \subseteq \Rep(D(G))$ 
is nondegenerate if and only if $HK=G$ and the symmetric bicharacter 
$BB^{\op}|_{(K \cap H) \times (K \cap H)}$
is nondegenerate.
\item[(ii)] $\Rep(D(G))$ is prime if and only if there is no triple
$(K,H,B)$, where $K$ and $H$ are normal subgroups of $G$ that 
centralize each other, $(G,\{e\}) \neq (K,H) \neq (\{e\},G)$, $HK = G$, and $B$ is a $G$-invariant bicharacter
on $K\times H$ such that the symmetric bicharacter
$BB^{\op}|_{(K \cap H) \times (K \cap H)}$ is nondegenerate.
\end{enumerate}
\end{proposition}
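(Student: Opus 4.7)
The plan is to derive both parts as essentially immediate corollaries of the formula for the M\"uger center given by Proposition~\ref{prop:Mueger center of S(K,H,B)}, combined with the bijection of Theorem~\ref{bijection}. Recall that $\S(K,H,B)$ is nondegenerate precisely when $\Z_2(\S(K,H,B)) = \Vec$, so the first step is to characterize which triples $(K',H',B')$ give $\S(K',H',B') = \Vec$. Looking at the definition \eqref{definition of S(K,H,B)}, the only simple object of $\S(\{e\}, G, 1)$ is $(e, 1)$, so $\S(\{e\}, G, 1) = \Vec$; by Theorem~\ref{bijection} this is the unique triple whose associated fusion subcategory is trivial.

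For part (i), apply Proposition~\ref{prop:Mueger center of S(K,H,B)}: $\Z_2(\S(K,H,B)) = \S(\ker \varphi_{B,(B^{\op})^{-1}}, HK, \psi_{B,(B^{\op})^{-1}})$. By the bijection just noted, this equals $\Vec$ iff $\ker \varphi_{B,(B^{\op})^{-1}} = \{e\}$ and $HK = G$. Unwinding the definition of $\varphi_{B,B'}$ from Section~\ref{sec:lattices} in the special case $B' = (B^{\op})^{-1}$, $K' = H$, $H' = K$, one has
\[
\varphi_{B,(B^{\op})^{-1}}(a)(b) = B(a,b) B^{\op}(a,b) = B(a,b)B(b,a) = (BB^{\op})(a,b),
\]
for $a, b \in K \cap H$. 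Hence $\ker \varphi_{B,(B^{\op})^{-1}}$ is precisely the radical of the symmetric bicharacter $BB^{\op}|_{(K\cap H)\times (K\cap H)}$, which is trivial iff this bicharacter is nondegenerate. Combining, $\S(K,H,B)$ is nondegenerate iff $HK = G$ and $BB^{\op}|_{(K\cap H)\times (K\cap H)}$ is nondegenerate, proving (i).

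For part (ii), recall that $\Rep(D(G))$ is prime iff it has no nondegenerate fusion subcategory $\D$ other than $\Vec$ and $\Rep(D(G))$ itself. By Theorem~\ref{bijection}, such subcategories are parametrized by triples $(K,H,B)$; the full category corresponds to $(G, \{e\}, 1)$ (the unique triple with $\dim \S(K,H,B) = |K|[G:H] = |G|^2$, forcing $K=G$ and $H=\{e\}$) and the trivial one corresponds to $(\{e\}, G, 1)$ as noted above. Thus $\Rep(D(G))$ is prime iff no triple $(K,H,B)$ with $(K,H) \neq (G,\{e\})$ and $(K,H) \neq (\{e\},G)$ produces a nondegenerate subcategory, and applying the criterion from (i) gives exactly the asserted statement.

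There is no real obstacle here: once Proposition~\ref{prop:Mueger center of S(K,H,B)} is available, the proof is a mechanical unwinding of definitions. The only point requiring a small sanity check is that $\psi_{B,(B^{\op})^{-1}}$ automatically becomes the trivial bicharacter when $\ker\varphi_{B,(B^{\op})^{-1}} = \{e\}$, but this is immediate since any bicharacter on $\{e\} \times G$ is trivial, so it never obstructs the identification $\S(\{e\},G,1) = \Vec$.
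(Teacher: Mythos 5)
Your proof is correct and follows essentially the same route as the paper: apply Proposition~\ref{prop:Mueger center of S(K,H,B)}, identify the trivial subcategory with the triple $(\{e\},G,1)$, observe that $\ker\varphi_{B,(B^{\op})^{-1}}$ is the radical of $BB^{\op}|_{(K\cap H)\times(K\cap H)}$ and that $\psi$ is automatically trivial, and deduce (ii) from the bijection together with the identification of the trivial and full subcategories.
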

\begin{proof}
(i) Note that 
${\mathcal S}(K,H,B)$ is nondegenerate
if and only if $\Z_2(\S(K,H,B))=
{\mathcal S}(\{e\},G,1)$, by \cite[Corollary 2.16]{M1}. 
By Proposition \ref{prop:Mueger center 
of S(K,H,B)}, this is the case if and only if
$\ker\varphi_{B,(B^{\op})^{-1}}=\{e\}$, $HK=G$, and $\psi_{B,(B^{\op})^{-1}}=1$.
The condition $\ker\varphi_{B,(B^{\op})^{-1}}=\{e\}$ is equivalent to the
condition that
$B(a,\cdot)|_{H\cap K}B^{\op}(a,\cdot)|_{H\cap K}$ is nontrivial on
$H\cap K$ whenever $a\in H\cap K-\{e\}$, that is,
$BB^{\op}|_{(K\cap H)\times (K\cap H)}$ is nondegenerate.
The condition $\psi_{B,(B^{\op})^{-1}} =1$ is automatically true when
$\ker\varphi_{B,(B^{\op})^{-1}}=\{e\}$.

(ii) This follows immediately from Theorem \ref{bijection}, Remark~\ref{S as bundles} (ii), and
part (i).
\end{proof}

\begin{remark}
\label{p^n}
Let $G:= \mathbb{Z}/p^n\mathbb{Z}$, where $p$ is a  prime. It was shown
in \cite{M1} that for $p=2$ the category $\Rep(D(G))$ is prime and that for odd $p$  
nontrivial proper modular subcategories of $\Rep(D(G))$ are
in bijection with isomorphisms $G \xrightarrow{\sim} \widehat{G}$, where $\widehat{G}$
denotes the set of group homomorphisms from $G$ to $k^{\times}$.
Let us recover this result from  Proposition~\ref{S(K,H,B) is nondegenerate iff}.
Suppose that $\S(K,H,B)$ where $(G,\{e\}) \neq (K,H) \neq (\{e\},G)$ is a nondegenerate subcategory 
of  $\Rep(D(G))$. The condition $HK=G$ implies that at least one of the subgroups $H,\, K$ 
coincides with $G$.  Let us assume $H=G$ and suppose that $|K| = m < p^n$. 
Let $x,\,y\in G$ be generators of $H$ and $K$ respectively. Then $B(x,\,y)$ is 
an $m$th root of $1$, so $B(y,\, y)$ is a root of $1$ of order less than $m$
and $BB^{op}|_{K\times K}= B^2|_{K\times K}$ is degenerate. 
Thus we must have $H=K=G$ and $B$ must be a  (necessarily symmetric)
bicharacter on $G$ such that $B^2$ is nondegenerate.  This is impossible
for $p=2$.  When $p$ is odd $B^2$ is nondegenerate if and only if $B$ is nondegenerate,
i.e., when $B$ comes from an isomorphism $G \xrightarrow{\sim} \widehat{G}$.

\end{remark}

As was observed by M\"uger in \cite{M1}, prime nondegenerate categories serve as building blocks
for nondegenerate categories.
Precisely, it was shown in \cite[Theorem 4.5]{M1} that every nondegenerate category is
equivalent to a finite direct product of prime ones (note, however, that such a 
decomposition is not unique in general). Below we collect some remarks concerning
the primality of $\Rep(D(G))$.

\begin{remark}
\begin{enumerate}
\item[(i)] If $G = M \times N$ is a direct product of non-trivial groups $M$ and $N$,
then $\Rep(D(G))$ is not prime. Indeed, by Proposition~\ref{S(K,H,B) is nondegenerate iff},
the (nontrivial proper) subcategory $\S(M \times \{e\}, \{e\} \times N, 1) \subseteq 
\Rep(D(G))$ is nondegenerate. This fact was also observed by M\"uger in \cite{M1}.
\item[(ii)] If $G$ is simple and nonabelian, then it immediately follows from
Proposition~\ref{S(K,H,B) is nondegenerate iff} that $\Rep(D(G))$ is prime. Note that the
aforementioned statement is not true for abelian simple groups \cite{M1} (see also Remark~\ref{p^n}). 
\item[(iii)] As mentioned above, simplicity of a nonabelian group $G$ is sufficient
for $\Rep(D(G))$ to be prime, however, it is not necessary.
Indeed, take $G= S_n$, $n \geq 3$, and let $K,H$ be a pair of centralizing
normal subgroups of $G$ such that $HK = G$. Note that $K \cap H$, being a
central subgroup of $G$, must be trivial, as the center of $G$ is trivial.
This implies that $G$ is a direct product of $H$ and $K$. However, as is well
known, $S_n$ does not admit a non-trivial direct product decomposition,
therefore, $\Rep(D(S_n))$ is prime by Proposition~\ref{S(K,H,B) is nondegenerate iff}.
\end{enumerate}
\end{remark}

\end{subsection}

\begin{subsection}{The Gauss sum and central charge}\label{gscc}

Let $\D$ be a premodular category with twist $\theta$. 
Recall that the {\em Gauss sum} of $\D$ is defined by
$$ \tau(\D) = \sum_{X \in \Irr(\D)} \theta(X) d(X)^2, $$
where $\Irr(\D)$ is the set of isomorphism classes of simple objects in $\D$.
The {\em central charge} is defined by
$$ \zeta(\D) = \frac{\tau(\D)}{\sqrt{\dim(\D)}}. $$
For basic properties of Gauss sum and central charge 
we refer the reader to \cite[Sect.\ 3.1]{BK}. In particular,
the central charge of a non-degenerate braided fusion category
is known to be a root of unity (this statement is known as Vafa's 
theorem, see \cite[Thm. 3.1.19]{BK}).

\begin{proposition}\label{prop:gauss sum}
The Gauss sum of the fusion subcategory $\S(K,H,B) \subseteq \Rep(D(G))$
is 
$$
\tau(\S(K,H,B)) = \frac{|G|}{|H|} 
\sum_{a \in K \cap H \cap R} |K_a| B(a, a).
$$ 
When $\S(K,H,B)$ is nondegenerate its Gauss sum is
$$
\tau(\S(K,H,B)) = \frac{|K|}{|K \cap H|} 
\sum_{a \in K \cap H} B(a, a).
$$
and its central charge is
$$
\zeta(\S(K,H,B)) = \frac{1}{\sqrt{|K \cap H|}} 
\sum_{a \in K \cap H} B(a, a).
$$  
\end{proposition}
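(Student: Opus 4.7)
The plan is to expand the Gauss sum directly using the description \eqref{definition of S(K,H,B)} of the simple objects in $\S(K,H,B)$, reduce the inner sum over characters to a column-orthogonality computation, and exploit the centrality of $a$ in $C_G(a)$ to kill most terms.

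Substituting $\theta(a,\chi) = \chi(a)/\deg\chi$ and $d(a,\chi) = |K_a|\deg\chi$ into the definition of $\tau$ gives
$$\tau(\S(K,H,B)) \;=\; \sum_{a\in K\cap R}|K_a|^2\sum_\chi \chi(a)\deg\chi,$$
where the inner sum runs over irreducible characters $\chi$ of $F := C_G(a)$ satisfying $\chi|_H = (\deg\chi)\rho$ with $\rho := B(a,\cdot)$. Since $H$ centralizes $K \ni a$ we have $H\subseteq F$, and the $G$-invariance of $B$ makes $\rho$ an $F$-invariant linear character of $H$. By Clifford's theorem every constituent of $\chi|_H$ is an $F$-conjugate of a single irreducible of $H$; since the $F$-orbit of $\rho$ is just $\{\rho\}$, the condition $\chi|_H = (\deg\chi)\rho$ is equivalent to the numerical identity $\langle \chi|_H, \rho\rangle_H = \deg\chi$.

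I can therefore rewrite the inner sum without the constraint as
$$\sum_\chi \chi(a)\,\langle\chi|_H,\rho\rangle_H \;=\; \frac{1}{|H|}\sum_{h\in H}\overline{\rho(h)}\sum_\chi \chi(a)\overline{\chi(h^{-1})},$$
where the second sum now ranges over all irreducible characters of $F$. Column orthogonality in $F$ makes this inner sum equal to $|C_F(a)|$ when $h^{-1}$ is $F$-conjugate to $a$ and zero otherwise. But $a \in Z(F)$, so its $F$-class is $\{a\}$ and $C_F(a) = F = C_G(a)$; hence only $h = a^{-1}$ survives, which requires $a \in H$. In this case the inner sum collapses to $\frac{|C_G(a)|}{|H|}\rho(a) = \frac{|C_G(a)|}{|H|}B(a,a)$, and the identity $|K_a|\,|C_G(a)| = |G|$ yields
$$\tau(\S(K,H,B)) \;=\; \frac{|G|}{|H|}\sum_{a\in K\cap H\cap R}|K_a|\,B(a,a),$$
which is the first formula.

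The remaining formulas are bookkeeping. Since $K$ and $H$ are normal, $a\in K\cap H$ implies $K_a\subseteq K\cap H$, and $G$-invariance of $B$ gives $B(gag^{-1},gag^{-1}) = B(a,a)$; together these convert the weighted sum over $R$-representatives into $\sum_{a\in K\cap H}B(a,a)$. When $\S(K,H,B)$ is nondegenerate, Proposition \ref{S(K,H,B) is nondegenerate iff} gives $HK = G$, equivalently $|G|/|H| = |K|/|K\cap H|$, yielding the second formula. Finally, Lemma \ref{dim S(K,H,B)} combined with $HK = G$ gives $\dim\S(K,H,B) = |K|\,[G:H] = |K|^2/|K\cap H|$, so $\sqrt{\dim\S(K,H,B)} = |K|/\sqrt{|K\cap H|}$, and dividing $\tau$ by this produces the central charge formula. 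The only delicate step is the Clifford plus column-orthogonality reduction in the second and third paragraphs; everything after it is a direct manipulation of $|G|$, $|H|$, $|K|$, and $|K\cap H|$.
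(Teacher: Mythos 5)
Your proof is correct and follows essentially the same route as the paper: expand the Gauss sum over the simple objects of $\S(K,H,B)$, use Clifford theory for the $C_G(a)$-invariant degree-one character $B(a,\cdot)$ of $H\subseteq C_G(a)$ to control the inner character sum, conclude that only $a\in K\cap H$ contributes, and then do the bookkeeping with $HK=G$ and Lemma \ref{dim S(K,H,B)} for the nondegenerate case. The only (harmless) variations are that you evaluate the inner sum by column orthogonality at the central element $a$ of $C_G(a)$, whereas the paper identifies it with $(\Ind_H^{C_G(a)}B(a,\cdot))(a)$ via Frobenius reciprocity and the induced-character formula — the same computation in different clothing — and that you pass from the sum over $K\cap H\cap R$ to the sum over $K\cap H$ using conjugation-invariance of $a\mapsto B(a,a)$, while the paper uses $K\cap H\subseteq Z(G)$ in the nondegenerate case; both are valid.
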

\begin{proof}
We have
\begin{equation*}
\begin{split}
\tau(\S(K,H,B)) 
&= \sum_{\text{simple } (a, \chi) \in \S(K,H,B)} 
\theta(a, \chi) \, d(a, \chi)^2 \\
&= \sum_{\text{simple } (a, \chi) \in \S(K,H,B)} 
\frac{\chi(a)}{\deg \chi} |K_a|^2 (\deg \chi)^2. 
\end{split}
\end{equation*}
By definition of $\S(K,H,B)$ the above expression is equal to
$$
\sum_{a \in K \cap R} \left( |K_a|^2 
\sum_{\chi \in \Irr(C_G(a)): \chi|_H = B(a, \cdot) \deg \chi}  
(\deg \chi) \chi(a) \right). 
$$
Observing that $B(a, \cdot)$ is
$C_G(a)$-invariant and applying Frobenius reciprocity and
Clifford theory, the above expression is equal to
$$
\sum_{a \in K \cap R} \left( |K_a|^2 
\sum_{\chi \text{ is an irr. constituent of } \Ind_H^{C_G(a)}B(a, \cdot)}  
(\deg \chi) \chi(a) \right).
$$
Applying Frobenius reciprocity again, the above expression is
equal to
$$
\sum_{a \in K \cap R} \left( |K_a|^2 \,
(\Ind_H^{C_G(a)}B(a, \cdot))(a) \right).
$$
Finally, using the formula
for the induced character and the fact that the induced character
is zero on $C_G(a) \backslash H$ the above expression is equal to
\begin{equation*}
\begin{split}
& \sum_{a \in K \cap H \cap R} \left( \frac{|K_a|^2}{|H|} 
\sum_{x \in C_G(a)} B(a, x^{-1}ax) \right) \\
= &\sum_{a \in K \cap H \cap R} \frac{|K_a|^2|C_G(a)|}{|H|}B(a,a)\\
= &\frac{|G|}{|H|} \sum_{a \in K \cap H \cap R} |K_a| B(a,a).
\end{split}
\end{equation*}
When $\S(K,H,B)$ is nondegenerate, by Proposition \ref{S(K,H,B) is nondegenerate iff},
$HK = G$, so $K \cap H$ is contained in the center of $G$. Therefore, the
Gauss sum is
\begin{equation*}
\begin{split}
\tau(\S(K,H,B))
&= \frac{|G|}{|H|} \sum_{a \in K \cap H \cap R} |K_a| B(a,a) \\
&= \frac{|K|}{|K \cap H|} \sum_{a \in K \cap H} B(a,a)
\end{split}
\end{equation*}
and the central charge is
\begin{equation*}
\begin{split}
\zeta(\S(K,H,B))
&= \frac{\tau(\S(K,H,B))}{\sqrt{\dim(\S(K,H,B))}}\\
&= \frac{\frac{|K|}{|K \cap H|} \sum_{a \in K \cap H} B(a,a)}
{\sqrt{|K| \cdot [G:H]}} \\
&= \frac{1}{\sqrt{|K \cap H|}} \sum_{a \in K \cap H} B(a,a). 
\end{split}
\end{equation*}
\end{proof}

\begin{remark}
Note that the sum $\sum_{a \in K \cap H} B(a,a)$ is the classical Gauss sum
for the quadratic form $a\mapsto  B(a,a)$ on the Abelian group $K \cap H$.
\end{remark}

\end{subsection}

\begin{subsection}{The central series}

Following Brugui\`{e}res \cite{Br}, we say that a functor $F:\C_1 \to \C_2$ between 
semisimple categories $\C_1$ and $\C_2$ is {\em dominant} if
every simple object in $\C_2$ is a direct summand of the image (under $F$) of
some object in $\C_1$. 

\begin{lemma}
\label{gens. of S(K,H,1)}
The fusion subcategory $\S(K,H,1) \subseteq \Rep(D(G))$ is
generated by the set
$gens(\S(K,H,1)) := \{ (a, 1) \in \Gamma \mid a \in K \cap R\} 
\cup \{ (e,\chi) \in \Gamma \mid \chi|_H = \deg \chi\}$.
\end{lemma}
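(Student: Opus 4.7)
The plan is to let $\D$ denote the fusion subcategory of $\Rep(D(G))$ generated by $gens(\S(K,H,1))$, to verify the easy inclusion $\D \subseteq \S(K,H,1)$ by inspecting the generators, and then to invoke Theorem~\ref{bijection} by computing the canonical triple $(K_\D, H_\D, B_\D)$ and showing it equals $(K, H, 1)$.

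First I would check $\D \subseteq \S(K,H,1)$. A generator of the first type, $(a,1)$ with $a \in K \cap R$, satisfies $1(h) = 1 = 1 \cdot \deg 1$, which is the defining condition of $\S(K,H,1)$ for the trivial bicharacter. A generator of the second type, $(e,\chi)$ with $\chi|_H = \deg\chi$, has $e \in K$ and $\chi(h) = \deg\chi = 1 \cdot \deg\chi$, so again it lies in $\S(K,H,1)$. Since $\S(K,H,1)$ is a fusion subcategory containing $gens(\S(K,H,1))$, the inclusion $\D \subseteq \S(K,H,1)$ follows.

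Next I would identify the canonical triple. For the support: $(a,1) \in \D$ for every $a \in K \cap R$ forces $K \subseteq K_\D$, while $\D \subseteq \S(K,H,1)$ yields $K_\D \subseteq K_{\S(K,H,1)} = K$, so $K_\D = K$. For $H_\D$: the second family of generators exhausts $\Irr(G/H)$, giving $\Rep(G/H) \subseteq \D \cap \Rep(G) = \Rep(G/H_\D)$ and hence $H_\D \subseteq H$, while the reverse inclusion follows from $\D \cap \Rep(G) \subseteq \S(K,H,1) \cap \Rep(G) = \Rep(G/H)$, so $H_\D = H$. For $B_\D$: applying the defining formula \eqref{Sarah's B} to the generator $(a,1) \in \D$ gives $B_\D(g^{-1}ag, h) = 1(ghg^{-1})/\deg 1 = 1$ for every $g \in G$ and $h \in H_\D = H$, and since $K$ is a union of $G$-conjugacy classes every element of $K$ has the form $g^{-1}ag$ for some $a \in K \cap R$; thus $B_\D$ is identically trivial on $K \times H$.

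Combining these computations, Theorem~\ref{bijection} gives $\D = \S(K_\D, H_\D, B_\D) = \S(K, H, 1)$, which is the desired equality. The argument is essentially a bookkeeping verification, and the only step requiring mild care is tracking the direction of the correspondence between inclusions of normal subgroups and inclusions of the associated subcategories of $\Rep(G)$.
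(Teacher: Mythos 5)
Your proof is correct, but it takes a genuinely different route from the paper's. The paper's argument is a one-line fusion-rule computation: in the equivariant-bundle picture, $(a,1)\otimes(e,\chi')$ decomposes as a sum of simples $(a,\chi)$ with $\chi$ running over the irreducible constituents of $\chi'|_{C_G(a)}$, so the claim reduces to the observation that the restriction functors $\Rep(G/H)\to\Rep(C_G(a)/H)$, $a\in K\cap R$, are dominant; hence every simple object of $\S(K,H,1)$ appears directly inside a product of the proposed generators. You instead avoid any fusion-rule computation and feed the generated subcategory $\D$ back into the classification machinery: you verify $\D\subseteq\S(K,H,1)$, compute the canonical triple $(K_\D,H_\D,B_\D)=(K,H,1)$ (using that $K_{\S(K,H,1)}=K$, that the second family of generators is exactly $\Irr(G/H)$, and formula \eqref{Sarah's B} evaluated on the generators $(a,1)$, whose independence of the choice of character the paper has already established), and then invoke Theorem~\ref{bijection} to conclude $\D=\S(K_\D,H_\D,B_\D)=\S(K,H,1)$. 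There is no circularity, since Theorem~\ref{bijection} and the triple construction precede this lemma and do not use it. The trade-off: the paper's proof is shorter and exhibits explicitly how each simple $(a,\chi)$ arises from the generators, which is arguably more informative; yours is a clean bookkeeping application of the bijection that requires no knowledge of how tensor products of simple objects decompose, at the cost of relying on the full strength of the classification theorem.
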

\begin{proof}
The lemma follows immediately from the observation that the
restriction functors $\Rep(G/H) \to \Rep(C_G(a)/H)$, 
$a \in K \cap R$, are dominant.
\end{proof}

\begin{proposition}
\label{adjoint of S(K,H,1)}
The adjoint subcategory $\S(K,H,1)_{ad}$ of the fusion subcategory
$\S(K,H,1)$ of $\Rep(D(G))$ is 
$\S([G,K], C_G(K) \cap \pi^{-1}(Z(G/H)), 1)$, where $\pi$ is the
canonical surjection from $G$ to $G/H$ and $[G,K] = \langle gkg^{-1}k^{-1} \mid
g \in G, k \in K\rangle $.
\end{proposition}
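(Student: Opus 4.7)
The plan is to identify the centralizer $\S(K,H,1)_{ad}' \subseteq \Rep(D(G))$ explicitly and then apply the double-centralizer theorem (Section~\ref{prelim: centralizers}) together with Lemma~\ref{centralizer of S(K,H,B)} to read off $\S(K,H,1)_{ad}$. Concretely, I will establish the intermediate identity
\[
\S(K,H,1)_{ad}' \;=\; \S\bigl(C_G(K)\cap \pi^{-1}(Z(G/H)),\; [G,K],\; 1\bigr),
\]
after which, noting $1^{\op}=1=1^{-1}$, Lemma~\ref{centralizer of S(K,H,B)} and $\D''=\D$ give $\S(K,H,1)_{ad} = \S([G,K],\,C_G(K)\cap\pi^{-1}(Z(G/H)),\,1)$, as claimed.

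The key reformulation is that for simple objects $X,Y$ in the modular category $\Rep(D(G))$, $X$ centralizes $Y\otimes Y^*$ if and only if $|S(X,Y)| = d(X)\,d(Y)$. One direction follows from the Verlinde-type identity $S(X,Y)\,S(X,Y^*) = d(X)\,S(X,Y\otimes Y^*)$ combined with $S(X,Y^*) = \overline{S(X,Y)}$: centralizing $Y\otimes Y^*$ forces $S(X,Y\otimes Y^*) = d(X)\,d(Y)^2$, hence $|S(X,Y)|^2 = d(X)^2 d(Y)^2$. Conversely, $|S(X,Y)| = d(X)\,d(Y)$ means the monodromy $m_{X,Y}$ is a scalar $\lambda\cdot\id$ with $|\lambda|=1$, and the naturality formula $m_{X,Y\otimes Y^*} = m_{X,Y^*}\cdot(m_{X,Y}\otimes\id) = \bar\lambda\lambda\cdot\id = \id$ then gives the centralizing condition. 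Since $m_{X,\cdot}$ is multiplicative, the relation $|S(X,Y)| = d(X)\,d(Y)$ is preserved under tensor products, duals and direct sums in $Y$, so it is enough to test it on any generating set of $\S(K,H,1)$; by Lemma~\ref{gens. of S(K,H,1)} we may take the generators $\{(b,1):b\in K\cap R\} \cup \{(e,\chi'):\chi'|_H=\deg\chi'\}$.

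Now I translate these two families of $|S|$-conditions on $X=(a,\chi)$ into subgroup conditions, using symmetry of $S$ and Note~\ref{centralize''}. For $Y=(b,1)$ with $b\in K\cap R$, Note~\ref{centralize''}(ii) gives that $|S((a,\chi),(b,1))| = d((a,\chi))\,d((b,1))$ iff $K_a,K_b$ commute element-wise and $\chi|_{[G,b]} = \deg\chi$. Requiring this for every $b\in K$ is equivalent to $a\in C_G(K)$ (which in turn implies $[G,K] \subseteq C_G(a)$, so $\chi$ makes sense on $[G,K]$) together with $\chi|_{[G,K]} = \deg\chi$. For $Y=(e,\chi')$, Note~\ref{centralize''}(i) gives the condition $\chi'|_{[G,a]} = \deg\chi'$; since the irreducible characters of $G/H$ separate points of $G/H$, imposing this for every $\chi'$ with $\chi'|_H = \deg\chi'$ is equivalent to $[G,a]\subseteq H$, i.e.\ $a\in\pi^{-1}(Z(G/H))$.

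Combining, $(a,\chi)\in\S(K,H,1)_{ad}'$ iff $a \in C_G(K)\cap\pi^{-1}(Z(G/H))$ and $\chi|_{[G,K]} = \deg\chi$, which by the definition \eqref{definition of S(K,H,B)} is exactly $\S(C_G(K)\cap\pi^{-1}(Z(G/H)),\,[G,K],\,1)$. Taking centralizers via Lemma~\ref{centralizer of S(K,H,B)} completes the argument. The main obstacle is the first step: establishing the equivalence between centralizing $Y\otimes Y^*$ and the weaker relation $|S(X,Y)| = d(X)\,d(Y)$, and observing that this relation is multiplicative in $Y$ so that testing it on the explicit generators of Lemma~\ref{gens. of S(K,H,1)} pins down the whole centralizer.
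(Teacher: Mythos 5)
Your proposal is correct, and its skeleton is exactly the paper's: establish $(\S(K,H,1)_{ad})' = \S(C_G(K)\cap\pi^{-1}(Z(G/H)),[G,K],1)$ by testing the condition $|S(X,Y)|=d(X)\,d(Y)$ only on the generators from Lemma~\ref{gens. of S(K,H,1)}, translate via Note~\ref{centralize''}, and finish with Lemma~\ref{centralizer of S(K,H,B)} and M\"uger's double centralizer theorem. The one genuine difference is at the first step: the paper simply cites \cite[Proposition 6.7]{GN} for the identification of $\Irr\bigl((\D)_{ad}'\bigr)$ with the ``projective centralizer'' condition $|S(X,Y)|=d(X)\,d(Y)$, whereas you prove this equivalence from scratch (the Verlinde ring-homomorphism property of $S(X,\cdot)/d(X)$ together with $S(X,Y^*)=\overline{S(X,Y)}$ in one direction; scalar monodromy from the triangle-inequality argument, plus its multiplicativity under tensor products, duals and passage to summands, in the other), and this same multiplicativity is what justifies your reduction to generators, a step the paper leaves implicit in the citation. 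So your argument buys self-containedness at the cost of redoing a known lemma; it is otherwise the paper's proof, and all the group-theoretic translations ($a\in C_G(K)$, $\langle [G,b] : b\in K\cap R\rangle=[G,K]$, and $[G,a]\subseteq H$ iff $a\in\pi^{-1}(Z(G/H))$) are carried out correctly.
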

\begin{proof}
By M\"uger's double centralizer theorem and Lemma 
\ref{centralizer of S(K,H,B)} it suffices to show that
$(\S(K,H,1)_{ad})' = \S(C_G(K) \cap \pi^{-1}(Z(G/H)), [G,K], 1)$.
By \cite[Proposition 6.7]{GN}, the simple objects (up to isomorphism)
of $(\S(K,H,1)_{ad})'$ are given by the set 
\begin{equation*}
\begin{split}
& \Irr((\S(K,H,1)_{ad})')\\
& = \{ (b, \chi') \in \Gamma \mid |S((a,\chi),(b,\chi'))| =
d(a,\chi) \, d(b,\chi'), \text{ for all simple } (a,\chi) \in \S(K,H,1) \}\\
& = \{ (b, \chi') \in \Gamma \mid |S((a,\chi),(b,\chi'))| =
d(a,\chi) \, d(b,\chi'), \text{ for all } (a,\chi) \in gens(\S(K,H,1)) \}\\
& = \{ (b, \chi') \in \Gamma \mid \chi|_{[G,b]}=\deg \chi,
\text{ for all } \chi \in \Irr(\Rep(G/H)), [K_a,K_b]=\{e\},\\
& \;\;\;\; \text{ and } \chi'|_{[G,a]} = \deg \chi',
\text{ for all }  a \in K \cap R\}\\
& = \{ (b, \chi') \in \Gamma \mid b \in C_G(K),
[G,b] \in H \text{ and } \chi'|_{[G,K]} = \deg \chi' \}\\
& = \{ (b, \chi') \in \Gamma \mid b \in C_G(K) \cap \pi^{-1}(Z(G/H)),
\text{ and } \chi'|_{[G,K]} = \deg \chi' \}\\
& = \Irr(\S(C_G(K) \cap \pi^{-1}(Z(G/H)), [G,K], 1)).
\end{split}
\end{equation*}
In the second equality above we used Lemma \ref{gens. of S(K,H,1)}
and in the third equality we used Note \ref{centralize''}.
\end{proof}

\begin{corollary}
$(i)$ The $n$th term of the upper central series of $\Rep(D(G))$ is given by
$$ \Rep(D(G))^{(n)} = \S(C_n(G), C_G(C_{n-1}(G)) \cap C^n(G), 1), \qquad n \geq 1. $$
$(ii)$ The $n$th term of the lower central series of $\Rep(D(G))$ is given by
$$ \Rep(D(G))_{(n)} = \S(C_G(C_{n-1}(G)) \cap C^n(G), C_n(G), 1), \qquad n \geq 1. $$
Here $C_n(G)$ and $C^n(G)$ are the $n$th terms of the lower and upper
central series of $G$, respectively.
\end{corollary}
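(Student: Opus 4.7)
The plan is to argue by induction on $n$, with Proposition~\ref{adjoint of S(K,H,1)} supplying the inductive step. I would first establish (i), and then deduce (ii) by taking M\"uger centralizers.

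For the base case $n=1$ of (i), since $\Rep(D(G)) = \S(G, \{e\}, 1)$, Proposition~\ref{adjoint of S(K,H,1)} directly gives $\Rep(D(G))^{(1)} = \S(C_1(G), Z(G), 1)$, matching the formula because $C^1(G) = Z(G)$ and $C_G(C_0(G)) = C_G(G) = Z(G)$. For the inductive step, assume $\Rep(D(G))^{(n)} = \S(K_n, H_n, 1)$ with $K_n := C_n(G)$ and $H_n := C_G(C_{n-1}(G)) \cap C^n(G)$. Proposition~\ref{adjoint of S(K,H,1)} yields $\Rep(D(G))^{(n+1)} = \S([G,K_n],\, C_G(K_n) \cap \pi^{-1}(Z(G/H_n)),\, 1)$ with $\pi : G \to G/H_n$ the canonical surjection. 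The first component equals $C_{n+1}(G)$ by definition. The crux is to identify the second component with $C_G(C_n(G)) \cap C^{n+1}(G)$, which boils down to the classical group-theoretic identity $[C^n(G), C_{n-1}(G)] = \{e\}$. Granting this: if $g \in C_G(C_n(G)) \cap C^{n+1}(G)$, then $[g,G] \subseteq C^n(G) \subseteq C_G(C_{n-1}(G))$, so $[g,G] \subseteq H_n$ and $g \in \pi^{-1}(Z(G/H_n))$; conversely, if $[g,G] \subseteq H_n \subseteq C^n(G)$, then $g \in C^{n+1}(G)$ by definition.

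For (ii), I would show by a parallel induction that $(\Rep(D(G))^{(n)})' = \Rep(D(G))_{(n)}$, after which (ii) follows immediately from (i) and the centralizer formula $\S(K,H,1)' = \S(H,K,1)$ of Lemma~\ref{centralizer of S(K,H,B)}. The inductive step uses the general identity $(\D_{ad})' = (\D')^{co}$ valid in any modular category, giving
\[
(\C^{(n+1)})' \;=\; ((\C^{(n)})_{ad})' \;=\; ((\C^{(n)})')^{co} \;=\; (\C_{(n)})^{co} \;=\; \C_{(n+1)},
\]
with the base case $(\C_{ad})' = \C_{pt}$.

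The main obstacle is the group-theoretic identity $[C^n(G), C_{n-1}(G)] = \{e\}$. This is a classical consequence of the three subgroups lemma: by induction on $n$ one establishes $[C^n(G), C_k(G)] \subseteq C^{n-k-1}(G)$ for $0 \leq k \leq n-1$, starting from the defining relations $[C^n(G), G] \subseteq C^{n-1}(G)$ and $C_k(G) = [G, C_{k-1}(G)]$; the case $k = n-1$ yields what is needed. The rest of the argument is essentially bookkeeping once this identity, together with Proposition~\ref{adjoint of S(K,H,1)}, is in hand.
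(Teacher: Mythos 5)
Your proof is correct and follows essentially the same route as the paper: part (i) is obtained by iterating Proposition~\ref{adjoint of S(K,H,1)} (your induction and the identity $[C^n(G),C_{n-1}(G)]=\{e\}$ just make explicit what the paper calls immediate), and part (ii) rests on the duality $\Rep(D(G))_{(n)}=(\Rep(D(G))^{(n)})'$ together with Lemma~\ref{centralizer of S(K,H,B)}, exactly as in the paper, which cites \cite[Theorem 6.8]{GN} for the relation you re-derive from $(\D_{ad})'=(\D')^{co}$.
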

\begin{proof}
Part $(i)$ follows immediately from Proposition \ref{adjoint of S(K,H,1)}, while
part $(ii)$ follows from \cite[Theorem 6.8]{GN} and Lemma \ref{centralizer of S(K,H,B)}.
\end{proof}

\end{subsection}

\end{section}

\begin{section}{Fusion subcategories of
the twisted quantum double of a finite group}
\label{Sect 5}

\begin{subsection}
{The twisted quantum double of a finite group}

Recall that a normalized 3-cocycle
$\omega : G\times G\times G \rightarrow k^{\times}$ 
is a map  satisfying:
\begin{equation}
\label{3-cocycle condition}
\omega(g_2, \, g_3, \, g_4)\omega(g_1, \, g_2g_3, \, g_4)\omega(g_1, \, g_2, \, g_3)
= \omega(g_1g_2, \, g_3, \, g_4)\omega(g_1, \, g_2, \, g_3g_4),
\end{equation}
\begin{equation}
\omega(g, \, e, \, l) = 1,
\end{equation}
for all $g, l, g_1, g_2, g_3, g_4 \in G$.
It follows that $\omega(e,g,l)=1=\omega(g,l,e)$ for all $g,l\in G$ as well.
We may assume the values of $\omega$ are roots of unity.
Define 
\begin{eqnarray}
\label{beta-defn}
\beta_a(x,y)&=& \frac{\omega(a,x,y)\omega(x,y,y^{-1}x^{-1}
axy)}{\omega(x,x^{-1}ax,y)},\\
\eta_a(x,y)&=& \frac{\omega(x,y,a) \omega(xyay^{-1}x^{-1}, x, y)}
{\omega(x, yay^{-1}, y)},\\
\label{gamma-defn}
\gamma_a(x,y) &=& \frac{\omega(x,y,a)\omega(a,a^{-1}xa,a^{-1}ya)}
{\omega(x,a,a^{-1}ya)},\\
\label{nu-defn}
\nu_a(x,y)&=& \frac{\omega(axa^{-1},aya^{-1},a) \omega(a,x,y)}
{\omega(axa^{-1},a,y)},
\end{eqnarray}
for all $a,x,y\in G$. 
Since $\omega$ is a 3-cocycle, we have
\begin{equation}
\label{beta-reln}
\beta_a(x,y)\beta_a(xy,z) = \beta_a(x,yz)\beta_{x^{-1}ax}(y,z)
\end{equation}
for all $a, x,y,z\in G$. Therefore, for any $a\in G$,
$\beta_a|_{C_G(a)}$ is a 2-cocycle.
Note that 
\begin{equation}\label{four-relns}
   \beta_a|_{C_G(a)}=\eta_a|_{C_G(a)}=\gamma_a|_{C_G(a)}
    =\nu_a|_{C_G(a)}.
\end{equation}

Direct calculations, using (\ref{3-cocycle condition}), show that
the following identities hold:

\begin{eqnarray}
\label{gamma beta relation 1}
\frac{\gamma_{ab}(x,y)} {\gamma_{b}(a^{-1}xa, a^{-1}ya)\gamma_a(x,y)}
&=& \frac{\beta_x(a,b)\beta_{y}(a,b)}{\beta_{xy}(a,b)},\\
\label{nu-eta-reln}
\frac{\nu_{ab}(x,y)}{\nu_a(bxb^{-1},byb^{-1}) \nu_b(x,y)} 
&=& \frac{\eta_x(a,b) \eta_y(a,b)}{\eta_{xy}(a,b)},
\end{eqnarray}
for all $a,b,x,y\in G$. 

Let $\C=\Rep(D^{\omega}(G))$. It is well known that $\C$ is a modular 
category  \cite{AC2}.
The set of isomorphism classes of simple objects in $\C$ may be
identified with the set
$$
  \Gamma := \{(a,\chi)\mid a\in R \mbox{ and } \chi \mbox{ is an
    irreducible }\beta_a\mbox{-character of }C_G(a)\},
$$
where a $\beta_a$-character of $C_G(a)$ is the trace function of a
$k^{\beta_a}[C_G(a)]$-module.

The following lemma is proved in \cite[Lemma 4.2]{NN}.

\begin{lemma}\label{lem-twisted}
Two objects $(a,\chi), (b,\chi')\in \Gamma$ centralize each other
if and only if the following conditions hold:
\begin{itemize}
\item[(i)] The conjugacy classes $K_a,K_b$ commute element-wise.
\item[(ii)] For all $x,y\in G$,

$\hspace{-2cm}\left(\frac{\beta_a(x,y^{-1}by)\beta_a(xy^{-1}by, x^{-1})
\beta_b(y,x^{-1}ax)\beta_b(yx^{-1}ax,y^{-1})}
{\beta_a(x,x^{-1})\beta_b(y,y^{-1})}\right)
\chi(xy^{-1}byx^{-1})\chi'(yx^{-1}axy^{-1}) = \deg \chi \deg\chi'$.
\end{itemize}
\end{lemma}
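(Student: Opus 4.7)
The plan is to combine M\"uger's centralization criterion with the explicit $S$-matrix formula for $\Rep(D^\omega(G))$, then apply a triangle-inequality argument in the spirit of the proof of Lemma \ref{centralize'}. By Proposition 2.5 of \cite{M1} and the positivity of categorical dimensions in $\C = \Rep(D^\omega(G))$, the pair $(a,\chi),\,(b,\chi')$ centralizes each other if and only if $|S((a,\chi),(b,\chi'))| = d(a,\chi)\, d(b,\chi')$. The $S$-matrix of $\Rep(D^\omega(G))$ (as computed, e.g., in \cite{AC2}) has the shape
$$
S((a,\chi),(b,\chi')) = \frac{|G|}{|C_G(a)||C_G(b)|}\sum_{g\in G(a,b)} \Omega_{a,b}(g)\,\chi(gbg^{-1})\,\chi'(g^{-1}ag),
$$
where $G(a,b) = \{g \in G \mid [a,\,gbg^{-1}]=e\}$ and $\Omega_{a,b}(g)$ is a unit scalar assembled from the $3$-cocycle $\omega$.

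The first step is to apply the triangle inequality exactly as in the proof of Lemma \ref{centralize'}. Using $|\Omega_{a,b}(g)| = 1$, $|\chi(gbg^{-1})| \leq \deg \chi$, and $|\chi'(g^{-1}ag)| \leq \deg \chi'$, saturation of $|S((a,\chi),(b,\chi'))| = d(a,\chi)\, d(b,\chi')$ forces simultaneously (a) $G(a,b) = G$, i.e., the conjugacy classes $K_a$ and $K_b$ commute element-wise, giving condition (i); (b) $|\chi(gbg^{-1})| = \deg \chi$ and $|\chi'(g^{-1}ag)| = \deg \chi'$ for every $g$; and (c) all summands share a common phase. Conditions (b) and (c) together assert that $\Omega_{a,b}(g)\, \chi(gbg^{-1})\, \chi'(g^{-1}ag)$ is independent of $g$ and equals $(\deg \chi)(\deg \chi')$ at $g=e$. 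After parametrizing $g$ in the form suggested by (ii), this common-phase equation becomes precisely the identity displayed there, provided $\Omega_{a,b}(g)^{-1}$ is identified with the ratio of $\beta$-values appearing in (ii). Conversely, (i) and (ii) immediately force every summand to coincide, which gives equality in the triangle inequality and hence centralization.

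The main obstacle is thus the cohomological bookkeeping required to identify $\Omega_{a,b}(g)$ explicitly with the quotient
$$
\frac{\beta_a(x,y^{-1}by)\,\beta_a(xy^{-1}by,x^{-1})\,\beta_b(y,x^{-1}ax)\,\beta_b(yx^{-1}ax,y^{-1})}{\beta_a(x,x^{-1})\,\beta_b(y,y^{-1})}
$$
appearing in condition (ii). Starting from the $R$-matrix and coproduct of $D^\omega(G)$, the scalar $\Omega_{a,b}(g)$ is a complicated expression in $\omega$, and one must apply the $3$-cocycle identity (\ref{3-cocycle condition}) repeatedly and invoke the definitions (\ref{beta-defn})--(\ref{nu-defn}), together with the consistency relation (\ref{beta-reln}) (which guarantees $\beta_a|_{C_G(a)}$ is a $2$-cocycle, needed to evaluate the projective $\beta_a$-character $\chi$ on conjugate elements), to cancel all stray factors of $\omega$ and leave precisely the fraction above. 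This purely algebraic simplification, while mechanical, is the technically delicate heart of the argument; once it is carried out, the equivalence of centralization with (i) and (ii) follows from the triangle-inequality saturation analysis already described.
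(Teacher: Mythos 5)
Your strategy (explicit $S$-matrix plus saturation of the triangle inequality) has two genuine gaps. The first is that you invoke the wrong centralization criterion: M\"uger's result \cite[Prop.\ 2.5]{M1}, as quoted in Section \ref{prelim: centralizers}, says that two simple objects centralize each other if and only if $S(X,Y)=d(X)d(Y)$ \emph{exactly}, not merely $|S(X,Y)|=d(X)d(Y)$. The absolute-value condition is strictly weaker --- it is precisely the condition analyzed in Lemma \ref{centralize'} and used for the projective centralizer in Proposition \ref{adjoint of S(K,H,1)} --- and it does not imply centralization: already for $D(\mathbb{Z}/n\mathbb{Z})$ one has $S((a,\chi),(b,\chi'))=\overline{\chi}(b)\overline{\chi'}(a)$, so $|S|=d\,d$ for every pair of simple objects, while centralization requires $\chi(b)\chi'(a)=1$. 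Consequently your step ``(b) and (c) together assert that the summand \ldots equals $(\deg\chi)(\deg\chi')$ at $g=e$'' is unjustified: saturation of the triangle inequality fixes a common phase but says nothing about its value, and $\chi(b)\chi'(a)=\deg\chi\,\deg\chi'$ is exactly the extra content of condition (ii) at $x=y=e$ (compare the conclusions of Lemma \ref{centralize} versus Lemma \ref{centralize'}, which you have conflated). The repair is easy --- work with $S=d\,d$ itself: since this is the positive real number obtained when all $|G|$ summands of modulus at most $\deg\chi\,\deg\chi'$ attain that bound with trivial phase, each summand must equal $\deg\chi\,\deg\chi'$ on the nose.

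The second, more serious, gap is that the entire content of the lemma is the explicit cocycle factor, and your argument defers it: you posit an $S$-matrix of the shape $\sum_g \Omega_{a,b}(g)\,\chi(gbg^{-1})\chi'(g^{-1}ag)$ with an unspecified unit scalar $\Omega_{a,b}(g)$ and then assert, without computation, that $\Omega_{a,b}(g)^{-1}$ ``is identified with'' the displayed ratio of $\beta$'s. Beyond the fact that this identification (which also depends on the convention used to evaluate the projective $\beta_a$-characters) is the technical heart of the statement, there is a mismatch you never address: the $S$-matrix sum has one free parameter $g$, whereas condition (ii) quantifies over pairs $(x,y)$ and its $\beta$-prefactor depends on $x$ and $y$ separately, not only on $g=xy^{-1}$; one must show, using the $2$-cocycle relation \eqref{beta-reln} and the $3$-cocycle identity \eqref{3-cocycle condition}, that the left-hand side of (ii) is compatible with this reparametrization, or argue on each basis vector directly. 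Note that the paper does not reprove this lemma at all: it cites \cite[Lemma 4.2]{NN}, where the statement is obtained by computing the square of the braiding of $D^{\omega}(G)$ directly on the basis of the tensor product of the two simple modules (which is why the two-parameter form of (ii) arises naturally there); that computation, or an equally explicit derivation of your $\Omega_{a,b}(g)$ from the $R$-matrix (cf.\ \cite{CGR}), is exactly what your sketch still owes.
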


The lemma has the following useful consequence:
Taking the magnitude on both sides of the equation in
Lemma \ref{lem-twisted}(ii), we obtain
$$
  | \chi(xy^{-1}byx^{-1})\chi'(yx^{-1}axy^{-1})|
    =\deg\chi\deg\chi',
$$
since the values of $\beta_a$ and $\beta_b$ are roots of unity.
Now, for any $h\in C_G(a)$, $\chi(h)$ is a sum of $\deg \chi$ roots
of unity, and so by the triangle inequality, $|\chi(h)|\leq \deg\chi$,
with equality if and only if all roots are the same.
Similarly for $\chi'$.
Thus if $(a,\chi), (b,\chi')$ centralize each other, then
$\frac{\chi(xy^{-1}byx^{-1})}{\deg\chi}$ and $\frac{\chi'(yx^{-1}axy^{-1})}
{\deg\chi'}$ are roots of unity for all $x,y\in G$.

\end{subsection}

\subsection{Canonical data determined by a fusion subcategory
of $\Rep(D^{\omega}(G))$}

Let $\D$ be a fusion subcategory of $\C$.
Define two normal subgroups of $G$ as follows:
\begin{eqnarray*}
  K_{\D} & = & \{gag^{-1}\mid g\in G, \ (a,\chi)\in \D\cap\Gamma
                 \mbox{ for some } \chi\}\\
  H_{\D} & = & \bigcap_{\chi : (e,\chi)\in\D} \Ker (\chi)
\end{eqnarray*}
where $\Ker(\chi) = \{g\in G\mid \chi(g)=\chi(e)\}$.
Note that the definition of $H_{\D}$ is equivalent to
$\D\cap \Rep(G) =\Rep(G/H_{\D})$ as before.

\begin{proposition}\label{prop-HDKD}
Let $\D$ be a fusion subcategory of $\C$.
Then $H_{\D'}=K_{\D}$, $K_{\D'}=H_{\D}$, and the groups $K_{\D}$ and
$H_{\D}$ centralize each other.
\end{proposition}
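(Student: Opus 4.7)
The plan is to follow the same strategy as in the untwisted case (Proposition \ref{HD'=KD}), using Lemma \ref{lem-twisted} in place of Lemma \ref{centralize}. The main technical step is to analyze when an object of the form $(e,\chi)\in \Rep(G)\subseteq \C$ centralizes a simple object $(b,\chi')\in \D\cap \Gamma$, and for this I would substitute $a=e$ into the identity of Lemma \ref{lem-twisted}(ii) and exploit the fact that $\omega$ is normalized.

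The key computation is that whenever any one of its arguments is $e$, the value of $\omega$ is $1$; in particular $\beta_e(x,y)=1$ for all $x,y\in G$, and likewise $\beta_b(y,e)=\beta_b(y,y^{-1})/\beta_b(y,y^{-1})=1$ whenever an entry reduces to $e$. Setting $a=e$ in Lemma \ref{lem-twisted}(ii), I expect every factor involving a $\beta$-cocycle to collapse to $1$ and the factor $\chi'(yx^{-1}axy^{-1})=\chi'(e)=\deg\chi'$ to cancel, so the condition that $(e,\chi)$ centralizes $(b,\chi')$ reduces simply to $\chi(xy^{-1}byx^{-1})=\deg\chi$ for all $x,y\in G$. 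Since $xy^{-1}byx^{-1}$ sweeps out the full conjugacy class $K_b$ as $x,y$ vary, this is equivalent to $K_b\subseteq \Ker(\chi)$.

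From this it follows that $(e,\chi)\in \D'$ if and only if $\chi$ is trivial on every conjugacy class $K_b$ with $(b,\chi')\in \D\cap\Gamma$, i.e.\ on $K_\D$, so that $\chi$ descends to a character of $G/K_\D$. Hence $\D'\cap \Rep(G) = \Rep(G/K_\D)$, which by definition means $H_{\D'}=K_\D$. The identity $K_{\D'}=H_\D$ then follows by applying this to $\D'$ in place of $\D$ and invoking M\"uger's double centralizer theorem $\D''=\D$, which applies because $\C=\Rep(D^{\omega}(G))$ is modular. Finally, the fact that $K_\D$ and $H_\D=K_{\D'}$ centralize each other is immediate from Lemma \ref{lem-twisted}(i): any simple object in $\D\cap\Gamma$ and any simple object in $\D'\cap\Gamma$ centralize each other, so their underlying conjugacy classes commute element-wise, and these classes generate $K_\D$ and $K_{\D'}$ respectively.

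The only genuine obstacle is the cocycle bookkeeping in the first step. I anticipate that after substituting $a=e$, each of the six $\beta$-factors in Lemma \ref{lem-twisted}(ii) individually equals $1$ because at least one argument of the defining $\omega$-expression \eqref{beta-defn} becomes $e$, but I would verify this factor by factor (in particular checking $\beta_b(yx^{-1}ax,y^{-1})=\beta_b(y,y^{-1})$ when $a=e$, so that it cancels with the denominator $\beta_b(y,y^{-1})$). Once this simplification is carried out, the rest of the argument is formally identical to the proof of Proposition \ref{HD'=KD}.
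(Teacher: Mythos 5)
Your proposal is correct and follows the paper's own route: the paper likewise observes, via Lemma \ref{lem-twisted} with $a=e$ (where the normalization of $\omega$ makes the $\beta$-factors collapse or cancel, exactly as you check), that $(e,\chi)$ centralizes $\D$ if and only if $\chi(a)=\deg\chi$ for all $a\in K_\D$, and then finishes as in Proposition \ref{HD'=KD} using M\"uger's double centralizer theorem and Lemma \ref{lem-twisted}(i). Your write-up just makes the cocycle bookkeeping explicit (noting, correctly, that $\beta_b(y,y^{-1})$ cancels rather than being $1$), which the paper leaves to the reader.
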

\begin{proof}
By Lemma~\ref{lem-twisted} a simple object $(e,\, \chi)$ in $\Rep(G)$ centralizes
$\D$ if and only if $\chi(a) =\deg\chi$ for all $a\in K_\D$. 
The rest is similar to the proof of  Proposition~\ref{HD'=KD}.
\end{proof}

\begin{lemma}\label{lem-E}
Let $\E$ be a fusion subcategory of $\C$ and let $(a,\chi)\in \E\cap\Gamma$.
Suppose $\chi'$ is an irreducible $\beta_a$-character of $C_G(a)$
such that $\frac{\chi'|_{K_{\E'}}}{\deg\chi'} = \frac{\chi|_{K_{\E'}}}
{\deg\chi}$. Then $(a,\chi')\in \E$.
\end{lemma}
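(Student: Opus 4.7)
The approach mirrors the proof of Lemma~\ref{noname} in the untwisted case, with Lemma~\ref{lem-twisted} playing the role of Lemma~\ref{centralize}. The plan is to show that $(a,\chi')$ centralizes every simple object of $\E'$, and then invoke M\"uger's double centralizer theorem (${\mathcal E}''={\mathcal E}$), which applies since $\Rep(D^\omega(G))$ is modular, to conclude that $(a,\chi')\in \E''=\E$.

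Fix an arbitrary $(b,\chi'')\in \E'\cap\Gamma$. I need to verify conditions (i) and (ii) of Lemma~\ref{lem-twisted} for the pair $(a,\chi')$ and $(b,\chi'')$. Condition (i) involves only the group elements $a$ and $b$, so it holds because $(a,\chi)$ and $(b,\chi'')$ already centralize each other by the hypothesis $(b,\chi'')\in \E'$ and $(a,\chi)\in \E$.

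For condition (ii), the key observation is that the coefficient
\[
\Phi_{a,b}(x,y) = \frac{\beta_a(x,y^{-1}by)\beta_a(xy^{-1}by, x^{-1})\beta_b(y,x^{-1}ax)\beta_b(yx^{-1}ax,y^{-1})}{\beta_a(x,x^{-1})\beta_b(y,y^{-1})}
\]
depends only on $a$ and $b$ (and $x,y$), not on the characters $\chi,\chi',\chi''$. Since $b\in K_{\E'}$ and $K_{\E'}$ is normal in $G$ by Proposition~\ref{prop-HDKD}, the element $xy^{-1}byx^{-1}$ lies in $K_{\E'}$. Hence the hypothesis $\frac{\chi'|_{K_{\E'}}}{\deg\chi'} = \frac{\chi|_{K_{\E'}}}{\deg\chi}$ allows me to substitute
\[
\frac{\chi'(xy^{-1}byx^{-1})}{\deg\chi'} = \frac{\chi(xy^{-1}byx^{-1})}{\deg\chi}
\]
in the identity of Lemma~\ref{lem-twisted}(ii). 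After clearing degrees, the desired identity for $(a,\chi')$ and $(b,\chi'')$ reduces to the identity satisfied by $(a,\chi)$ and $(b,\chi'')$, which is valid since $(a,\chi)\in\E$ centralizes $(b,\chi'')\in\E'$. Thus $(a,\chi')$ centralizes $(b,\chi'')$, and since this holds for all simple $(b,\chi'')\in\E'$, we get $(a,\chi')\in\E''=\E$.

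I do not anticipate a serious obstacle: the entire argument hinges on the fact that the cocycle coefficient $\Phi_{a,b}(x,y)$ is character-independent, so that replacing $\chi'$ by $\chi$ (at points of $K_{\E'}$) in the centralizing identity is legitimate. This is exactly the same structural reason that made the proof of Lemma~\ref{noname} go through in the untwisted setting, and the only new ingredient is bookkeeping the $\beta$-factors, which we never need to compute.
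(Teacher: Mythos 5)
Your proposal is correct and follows essentially the same route as the paper: verify Lemma~\ref{lem-twisted} for $(a,\chi')$ against each simple $(b,\chi'')\in\E'$ by noting condition (i) only involves $a,b$, replacing $\chi'$ by $\chi$ at the point $xy^{-1}byx^{-1}\in K_{\E'}$ using the hypothesis, and then invoking the known centralization of $(a,\chi)$ with $\E'$ together with $\E''=\E$. The paper's proof is just a terser version of the same argument; your explicit remarks about the character-independence of the $\beta$-coefficient and the normality of $K_{\E'}$ are exactly the points it leaves implicit.
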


\begin{proof}
We will show that $(a,\chi')$ centralizes $\E'$.
Let $(b,\chi'')\in\E'\cap\Gamma$. 
Note that since $(a,\chi)\in\E$, the conjugacy classes $K_a$ and $K_b$
commute.
It remains to show that the equation in Lemma \ref{lem-twisted}(ii)
holds for the pair $(a,\chi'),(b,\chi'')$.
By hypothesis,  $\frac{\chi'|_{K_{\E'}}}{\deg\chi'} = \frac{\chi|_{K_{\E'}}}
{\deg\chi}$, so we may replace $\chi'$ by $\chi$ in the equation.
However we already know that $(a,\chi)$ centralizes $\E'$ since 
$(a,\chi)\in \E$, so by Lemma \ref{lem-twisted}(ii), the desired equation
holds.
\end{proof}

Let $\D$ be any fusion subcategory of $\C$. 
Define a pairing
$$
  B_{\D} : K_{\D}\times H_{\D}\rightarrow k^{\times}
$$
as follows. Let $(a,\chi)\in \D\cap\Gamma$, and
\begin{equation}\label{BD-defn}
   B_{\D}(x^{-1}ax,h):= \frac{\beta_a(x,h)\beta_a(xh,x^{-1})}
       {\beta_a(x,x^{-1})} \frac{\chi(xhx^{-1})}{\deg \chi}
\end{equation}
for all $x\in G$, $h\in H_{\D}$.
We will show that $B_{\D}$ is well-defined and does not depend on $\chi$.
For this, we give an equivalent definition: Write $h=y^{-1}by$ for
$b\in H_{\D}\cap R$, $y\in G$, $(b,\chi')\in \D'\cap \Gamma$.
Since $(a,\chi)$ and $(b,\chi')$ centralize each other, we have
by the definition (\ref{BD-defn}) of $B_{\D}$ and Lemma
\ref{lem-twisted}(ii) that
\begin{equation}\label{BD-alternate}
  B_{\D}(x^{-1}ax,y^{-1}by) = \frac{\beta_b(y,y^{-1})}
   {\beta_b(y,x^{-1}ax)\beta_b(yx^{-1}ax,y^{-1})}
   \frac{\deg\chi'}{\chi'(yx^{-1}axy^{-1})}.
\end{equation}
This proves that $B_{\D}$ does not depend on the choice of $\chi$.
Now suppose $z^{-1}az=x^{-1}ax$ for some $z\in G$.
Then by (\ref{BD-alternate}),
\begin{eqnarray*}
   B_{\D}(z^{-1}az,y^{-1}by) & = & \frac{\beta_b(y,y^{-1})}
       {\beta_b(y,z^{-1}az)\beta_b(yz^{-1}az,y^{-1})}
        \frac{\deg\chi'}{\chi'(yz^{-1}azy^{-1})}\\
       &=&  \frac{\beta_b(y,y^{-1})}
       {\beta_b(y,x^{-1}ax)\beta_b(yx^{-1}ax,y^{-1})}
        \frac{\deg\chi'}{\chi'(yx^{-1}axy^{-1})}\\
     &=& B_{\D}(x^{-1}ax,y^{-1}by).
\end{eqnarray*}
Therefore $B_{\D}$ is well-defined.

Next we give a definition of a $G$-invariant $\omega$-bicharacter, 
generalizing \cite[Defns.\ 4.5, 4.6]{NN} (the case $K=H$),
and show that $B_{\D}$ satisfies the definition. 

\begin{definition}\label{inv-bichar}
Let $H,K$ be normal subgroups of $G$ that centralize each other, 
and let $B:K\times H\rightarrow k^{\times}$ be a function. 

We say that $B$ is an $\omega$-{\em bicharacter} if
\begin{eqnarray*}
  \mbox{(i) }B(x,yz) &=& \beta_x^{-1}(y,z) B(x,y)B(x,z) \ \mbox{ and}\\
  \mbox{(ii) }B(wx,y) &=& \beta_y(w,x) B(w,y)B(x,y)
\end{eqnarray*}
for all $w,x\in K$, $y,z\in H$.
Equivalently, $B(x, \cdot)$ is a $\beta_x$-character of $H$ and
$B(-,y)$ is a $\beta_y^{-1}$-character of $K$.

We say that $B$ is $G$-{\em invariant} if 
$$
   B(x^{-1}kx,h) = \frac{\beta_k(x,h)\beta_k(xh,x^{-1})}{\beta_k(x,x^{-1})}
                       B(k,xhx^{-1})
$$
for all $x,y\in G$, $h\in H$, $k\in K$.
\end{definition}

\begin{remark}\label{rem:G-inv}
\begin{enumerate}
\item[(i)] The $G$-invariance property corresponds to the following action
of $G$: Let $\{\delta_x\}_{x\in G}$ be the basis of the linear dual $(kG)^*$
of the group algebra, that is dual to $G$, that is $\delta_x(y)=\delta_{x,y}$
for all $x,y\in G$.
Then we may identify a basis of $D^{\omega}(G)$ with the set 
$\{\delta_x\overline{g}\}_{x,g\in G}$. In this notation, the subalgebra
$k\delta_xC_G(x)$ of $D^{\omega}(G)$ is isomorphic to the twisted group algebra
$k^{\beta_x}[C_G(x)]$. A group element $g\in G$ acts on $D^{\omega}(G)$ via
conjugation by the invertible element $\overline{g} = \sum_{x\in G}\delta_x
\overline{g}$, and takes the subalgebra $k^{\beta_x}[C_G(x)]$ to
$k^{\beta_{g^{-1}xg}}[C_G(g^{-1}xg)]$. Under these identifications,
$B$ is $G$-invariant if and only if the function $B(k, \cdot)$ on 
$k^{\beta_k}[H]$ is taken to the function $B(x^{-1}kx,\cdot)$ on 
$k^{\beta_{x^{-1}kx}}[H]$ via conjugation by $\overline{x}$.

\item[(ii)] There is a symmetry in the definition of an $\omega$-bicharacter
that appears to be lacking in the definition of $G$-invariance.
However, a consequence of the next lemma is that
there is an equivalent definition of
$G$-invariance, considering $B$ instead to be a function in the first argument,
fixing the second.
\end{enumerate} 
\end{remark}

\begin{lemma}
\label{lemma:nu-beta-reln}
Let $x,h,k\in G$ for which $hk=kh$. Then
\begin{itemize}
 \item[(i)] $\displaystyle{\frac{\nu_x(h,k)}{\nu_x(k,h)}
= \frac{\beta_{xhx^{-1}}(x, x^{-1})}
{\beta_{xhx^{-1}}(x,k) \beta_{xhx^{-1}}(xk, x^{-1})}}$,
 \item[(ii)] $\displaystyle{\frac{\nu_x(x^{-1}hx, x^{-1}kx)}  
     {\nu_x(x^{-1}kx, x^{-1}hx)}= \frac{\nu_{x^{-1}}(k,h)}  {\nu_{x^{-1}}(h,k)}}$, and
\item[(iii)] $\displaystyle{\frac{\beta_k(y^{-1},y)}
{\beta_k(y^{-1},h)\beta_k(y^{-1}h,y)} =
\frac{\beta_{h}(y,y^{-1})} 
   {\beta_{h}(y,k)\beta_{h}(yk,y^{-1})}  }$.
\end{itemize}
\end{lemma}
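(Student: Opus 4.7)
All three identities are direct consequences of the 3-cocycle equation (\ref{3-cocycle condition}) applied to the explicit formulas (\ref{beta-defn})--(\ref{nu-defn}) for $\beta$ and $\nu$. My strategy is to reduce each side of each equation to a rational monomial in values of $\omega$ and then verify the resulting identity by repeated substitution of (\ref{3-cocycle condition}). The commutation hypothesis $hk=kh$ is used throughout to ensure that various conjugated arguments collapse; in particular whenever $k$ conjugates an object fixed by $h$ (or vice versa), the conjugation is trivial, which is what allows the $\omega$-monomials on the two sides to match up.

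I would start with (iii), as it is a pure statement about $\beta$. Expanding each of the six $\beta$-factors via (\ref{beta-defn}) produces an identity of the form $\prod \omega(\ldots) = \prod \omega(\ldots)$ in which, using $hk=kh$, many of the inner arguments $(\cdot)^{-1}(\cdot)(\cdot)$ simplify to $h$, $k$, or products of $y$, $y^{-1}$ with $h$ or $k$. The remaining identity then follows from a small number of applications of (\ref{3-cocycle condition}) with the quadruples $(k,y^{-1},y,h)$, $(h,y,k,y^{-1})$, $(y^{-1},h,y,k)$, and $(k,y^{-1},h,y)$ (and their analogues).

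For (i), I would again expand both sides using (\ref{beta-defn}) and (\ref{nu-defn}); the hypothesis $hk=kh$ lets me simplify $\nu_x(h,k)/\nu_x(k,h)$ and the corresponding $\beta$-ratio to matching $\omega$-monomials, again reducing the problem to a few instances of (\ref{3-cocycle condition}). Part (ii) I would then derive from (i): apply (i) with the substitutions $(x,h,k)$ to produce an expression for $\nu_x(h,k)/\nu_x(k,h)$, and apply (i) again with $x$ replaced by $x^{-1}$ and $(h,k)$ replaced by $(xhx^{-1}, xkx^{-1})$ to produce an expression for $\nu_{x^{-1}}(xhx^{-1},xkx^{-1})/\nu_{x^{-1}}(xkx^{-1},xhx^{-1})$. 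Rewriting the two resulting $\beta$-ratios and using part (iii) (with appropriate variables) to transform the $\beta$-factors shows that the two ratios agree after the indicated relabeling. A slightly slicker alternative is to expand both sides of (ii) directly via (\ref{nu-defn}) and verify the resulting $\omega$-identity from (\ref{3-cocycle condition}).

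The only real obstacle is the bookkeeping: each identity unfolds into a product of roughly a dozen values of $\omega$, and the applications of (\ref{3-cocycle condition}) must be made in the right order for the cancellations to be visible. There is no conceptual difficulty beyond the commutation hypothesis $hk=kh$, which is what allows the inner conjugations appearing in (\ref{beta-defn}) and (\ref{nu-defn}) to simplify. I would therefore proceed by writing each side as a formal product of $\omega$-values, reducing modulo the cocycle relation, and checking the resulting word identity by hand.
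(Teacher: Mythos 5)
Your overall strategy---expand $\beta$ and $\nu$ via \eqref{beta-defn}--\eqref{nu-defn} and verify the resulting $\omega$-identities from the cocycle relation \eqref{3-cocycle condition}---is in spirit what the paper does for part (i), but as written your proposal has concrete problems. First, the reason you give for expecting short computations is not valid: the hypothesis $hk=kh$ does \emph{not} make the inner conjugations collapse in part (iii), because $y$ is arbitrary; for instance $\beta_k(y^{-1}h,y)$ contributes the factors $\omega\bigl(y^{-1}h,\,y,\,y^{-1}h^{-1}yky^{-1}hy\bigr)$ and $\omega\bigl(y^{-1}h,\,h^{-1}yky^{-1}h,\,y\bigr)$, which do not simplify. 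So your list of quadruples ``and their analogues'' for a direct proof of (iii) is an unverified guess; notably the paper does not prove (iii) by direct expansion at all, but deduces it from (i) and (ii). Second, your derivation of (ii) from (i) and (iii) does not go through as stated: applying (i) to the triple $(x,\,x^{-1}hx,\,x^{-1}kx)$ and to $(x^{-1},\,k,\,h)$ reduces (ii) to the identity
\[
\frac{\beta_h(x,x^{-1})}{\beta_h(x,x^{-1}kx)\,\beta_h(kx,x^{-1})}
=\frac{\beta_{x^{-1}kx}(x^{-1},x)}{\beta_{x^{-1}kx}(x^{-1},h)\,\beta_{x^{-1}kx}(x^{-1}h,x)},
\]
in which the relevant subscripts are $h$ and $x^{-1}kx$; these need not commute, so (iii) cannot be applied ``with appropriate variables'' to this pair, and bridging the two sides requires further, unspecified, applications of \eqref{beta-reln}. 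Your fallback---direct expansion of (ii)---is an identity among twelve values of $\omega$ for which you exhibit no derivation. Asserting that the bookkeeping must work out because both sides are $\omega$-monomials is not a proof; the content of the lemma is precisely the exhibited reduction.

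For comparison, the paper's route is: (i) by direct computation, applying \eqref{3-cocycle condition} to the tuples $(xkx^{-1},x,h,x^{-1})$, $(xkx^{-1},xhx^{-1},x,x^{-1})$, $(xhx^{-1},xkx^{-1},x,x^{-1})$, $(xkx^{-1},x,x^{-1},xhx^{-1})$; (ii) in two lines from the prepared identity \eqref{nu-eta-reln} applied to the quadruples $(x,x^{-1},h,k)$ and $(x,x^{-1},k,h)$, where $hk=kh$ makes the two right-hand sides coincide and the $\eta$-factors cancel; and (iii) from (i) and (ii) applied to suitable triples. The missing idea in your proposal is some such completed mechanism (for example \eqref{nu-eta-reln}, or an explicit verified chain of cocycle substitutions) that actually discharges the computation for (ii) and (iii); without it the proposal remains a plan rather than a proof.
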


\begin{proof}
(i) We must show that 
$$ \frac{\nu_x(h,k)}{\nu_x(k,h)} \cdot  
\frac{\beta_{xhx^{-1}}(x,k) \beta_{xhx^{-1}}(xk, x^{-1})}
{\beta_{xhx^{-1}}(x, x^{-1})} = 1. $$
This follows by applying the definitions \eqref{nu-defn} and \eqref{beta-defn},
and the $3$-cocycle condition \eqref{3-cocycle condition} to the
tuples $(xkx^{-1},x,h,x^{-1})$, $(xkx^{-1}, xhx^{-1}, x, x^{-1})$, 
$(xhx^{-1}, xkx^{-1},x,x^{-1})$, $(xkx^{-1},x,x^{-1},xhx^{-1})$.

(ii) Applying \eqref{nu-eta-reln} to the quadruples 
$(x,x^{-1},h,k)$ and $(x,x^{-1},k,h)$ we obtain:
\begin{eqnarray*}
\nu_x(x^{-1}hx, x^{-1}kx) \nu_{x^{-1}}(h,k) &=& 
\frac{\eta_{hk}(x, x^{-1})}{\eta_h(x, x^{-1}) \eta_k(x, x^{-1})},\\
\nu_x(x^{-1}kx, x^{-1}hx) \nu_{x^{-1}}(k,h) &=& 
\frac{\eta_{kh}(x, x^{-1})}{\eta_k(x, x^{-1}) \eta_h(x, x^{-1})}.
\end{eqnarray*}
Since $hk=kh$ the stated identity follows. 

(iii)
Applying the identity in part (i) to the triples
$(y^{-1}, yky^{-1}, h)$ and $(y, y^{-1}hy,k)$, the claimed identity
is equivalent to
$$ 
\frac{\nu_{y^{-1}}(yky^{-1}, h)\nu_y(k,y^{-1}hy)}
   {\nu_{y^{-1}}(h, yky^{-1})\nu_y(y^{-1}hy,k)} = 1,
$$
which is seen to be true by applying the identity in part (ii)
to the triple $(y, yky^{-1}, h)$.
\end{proof}

It follows from part (iii) of the lemma that a function 
$B:K\times H\rightarrow k^{\times}$ 
is $G$-invariant if and only if
$$
 B(k,y^{-1}hy) = \frac{\beta_h(y,y^{-1})}{\beta_h(y,k)\beta_h(yk,y^{-1})}
                         B(yky^{-1},h)
$$
for all $x,y\in G$, $h\in H$, $k\in K$.

\begin{proposition}
Let $\D$ be a fusion subcategory of $\C$.
Then $B_{\D}$ is a $G$-invariant $\omega$-bicharacter on
$K_{\D}\times H_{\D}$.
\end{proposition}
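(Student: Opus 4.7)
The plan is to verify the three required conditions on $B_\D$---the two bicharacter identities of Definition~\ref{inv-bichar} and $G$-invariance---by using both equivalent formulas \eqref{BD-defn} and \eqref{BD-alternate} in tandem, combined with the centralizer condition of Lemma~\ref{lem-twisted}. The central conceptual input is the observation that for any $(a,\chi)\in\D\cap\Gamma$, the projective representation $\rho$ with cocycle $\beta_a$ affording $\chi$ acts on the subgroup $H_\D\subseteq C_G(a)$ by scalar matrices, and symmetrically for $(b,\chi')\in\D'\cap\Gamma$ on $K_\D\subseteq C_G(b)$.

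First I would establish this scalar-action claim. Fix $a\in K_\D\cap R$ and $h\in H_\D$. Since $H_\D=K_{\D'}$, we can write $h=xbx^{-1}$ for some $x\in G$ and $b\in H_\D\cap R$, and choose an irreducible $\beta_b$-character $\chi'$ with $(b,\chi')\in\D'\cap\Gamma$. Applying Lemma~\ref{lem-twisted}(ii) to the centralizing pair $(a,\chi),(b,\chi')$ with $y=e$, together with the triangle-inequality remark recorded immediately after that lemma, forces $|\chi(h)|=\deg\chi$. Since the eigenvalues of $\rho(h)$ are roots of unity, they must all coincide, so $\rho(h)$ is a scalar multiple of the identity. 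With $x=e$ in \eqref{BD-defn}, normalization of $\omega$ gives $B_\D(a,h)=\chi(h)/\deg\chi$, so this scalar is exactly $B_\D(a,h)$. The projective multiplication law $\rho(h_1)\rho(h_2)=\beta_a(h_1,h_2)\rho(h_1h_2)$ then descends to the scalar identity
$$B_\D(a,h_1)B_\D(a,h_2)=\beta_a(h_1,h_2)\,B_\D(a,h_1h_2),$$
which is the first bicharacter identity for representatives $a\in R$. The second bicharacter identity is formally symmetric: I would use \eqref{BD-alternate} in place of \eqref{BD-defn} and repeat the argument with $\chi'$ acting on $K_\D=K_{\D''}\subseteq C_G(b)$ by scalars.

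It remains to establish $G$-invariance and to extend the bicharacter identities from representatives $a\in R$ to arbitrary $k\in K_\D$. For $G$-invariance, writing $k=x_0^{-1}ax_0$ and applying \eqref{BD-defn} separately to each side of the desired identity, the character values cancel and the claim reduces to a purely cocycle-theoretic relation among evaluations of $\beta_a$; this relation follows from the $2$-cocycle identity \eqref{beta-reln}, equivalently from \eqref{3-cocycle condition}, with the help of Lemma~\ref{lemma:nu-beta-reln}. The same cocycle manipulation, combined with the bicharacter identity already proved at $a\in R$, upgrades it to arbitrary $k\in K_\D$. The main obstacle I anticipate is this $\beta$-cocycle bookkeeping: although routine in principle, organizing the multiple applications of \eqref{3-cocycle condition} transparently is the most computation-heavy part of the argument. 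Conceptually, however, the entire proposition is encoded in the scalar-action observation, which translates the centralizer condition of Lemma~\ref{lem-twisted} into the statement that $B_\D$ is a genuine projective character in each argument.
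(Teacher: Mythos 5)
Your proposal is correct and follows essentially the same route as the paper's proof: reduce everything to the definitions \eqref{BD-defn} and \eqref{BD-alternate}, prove the multiplicativity in the second argument at representatives $a\in K_\D\cap R$ from the fact that $\chi$ acts on $H_\D$ by scalars (which the paper uses tacitly via the remark after Lemma~\ref{lem-twisted}), obtain the second bicharacter identity symmetrically from \eqref{BD-alternate}, and handle $G$-invariance and the extension to general elements of $K_\D$ by repeated applications of the cocycle identity \eqref{beta-reln}. The only differences are cosmetic: you make the scalar-action observation explicit and order the steps slightly differently (bicharacter identity at representatives before $G$-invariance), while the paper proves $G$-invariance first and carries out the $\beta$-cocycle bookkeeping in full.
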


\begin{proof}
First we check $G$-invariance.
By definition (\ref{BD-defn}), for all $a\in K\cap R$, we have
$$
  B_{\D}(x^{-1}ax,h) = \frac{\beta_a(x,h)\beta_a(xh,x^{-1})}
    {\beta_a(x,x^{-1})} \frac{\chi(xhx^{-1})}{\deg\chi}
  =  \frac{\beta_a(x,h)\beta_a(xh,x^{-1})}
    {\beta_a(x,x^{-1})} B_{\D}(a,xhx^{-1}),
$$
the second equality by (\ref{BD-defn}) with $h$ replaced by $xhx^{-1}$
and $x$ replaced by the identity element.
More generally, let $k=y^{-1}ay$, and apply the above equation twice, the first
time to express $B((yx)^{-1}ayx, h)$ as a scalar multiple of $B(a, (yx)h(yx)^{-1})$,
and the second time to express $B(a, y(xhx^{-1})y^{-1})$ as a scalar multiple
of $B(y^{-1}ay, xhx^{-1})$.
We obtain in this way:
\begin{equation}\label{eqn:more-invariance}
  B(x^{-1}kx,h) = \frac{\beta_a(y,y^{-1})\beta_a(yx,h)\beta_a(yxh,x^{-1}
   y^{-1})}{\beta_a(y,xhx^{-1})\beta_a(yxhx^{-1}, y^{-1})\beta_a(yx,x^{-1}y^{-1})}
   B(k,xhx^{-1}).
\end{equation}
This is equivalent to $G$-invariance:
Apply (\ref{beta-reln}) thrice to 
$$
   \frac{\beta_{y^{-1}ay}(x,h)\beta_{y^{-1}ay}(xh,x^{-1})}
     {\beta_{y^{-1}ay}(x,x^{-1})}
$$
to express all scalars as images of $\beta_a$,
and then apply (\ref{beta-reln}) to the quadruples
$(a, yxh, x^{-1}, y^{-1})$ and $(a, yx, x^{-1}, y^{-1})$ to obtain
the expression \eqref{eqn:more-invariance}. 

Now let $h_1,h_2\in H$ and $a\in K\cap R$.
By the definition of $B_{\D}$, we have
$$
  B_{\D}(a,h_1)B_{\D}(a,h_2) = \frac{\chi(h_1)}{\deg\chi}
   \frac{\chi(h_2)}{\deg\chi} = \beta_a(h_1,h_2)\frac{\chi(h_1h_2)}
     {\deg\chi} = \beta_a(h_1,h_2)B_{\D}(a,h_1h_2).
$$
Let $x\in G$. By $G$-invariance of $B_{\D}$ we now have that
$B_{\D}(x^{-1}ax,h_1)B_{\D}(x^{-1}ax,h_2)$ is equal to
\begin{eqnarray*}
&&\frac{\beta_a(x,h_1)\beta_a(xh_1,x^{-1})\beta_a(x,h_2)\beta_a(xh_2,x^{-1})}
    {\beta_a(x,x^{-1})^2} B(a,xh_1x^{-1})B(a,xh_2x^{-1})\\
&&= \ \frac{\beta_a(xh_1x^{-1},xh_2x^{-1})\beta_a(x,h_1)\beta_a(xh_1,x^{-1})
       \beta_a(x,h_2)\beta_a(xh_2,x^{-1})}{\beta_a(x,x^{-1})^2}
       B(a,xh_2h_2x^{-1}).
\end{eqnarray*}
On the other hand, 
$$
  \beta_{x^{-1}ax}(h_1,h_2)B_{\D}(x^{-1}ax,h_1h_2) = 
   \frac{\beta_{x^{-1}ax}(h_1,h_2)\beta_a(x,h_1h_2)\beta_a(xh_1h_2,x^{-1})}
      {\beta_a(x,x^{-1})} B(a,xh_1h_2x^{-1}).
$$
Comparing, the two will be the same if and only if
$$
  \frac{\beta_a(xh_1x^{-1},xh_2x^{-1})\beta_a(x,h_1)\beta_a(xh_1,x^{-1})
   \beta_a(x,h_2)\beta_a(xh_2,x^{-1})}
    {\beta_a(x,x^{-1})\beta_{x^{-1}ax}(h_1,h_2)\beta_a(x,h_1h_2)
  \beta_a(xh_1h_2,x^{-1})} = 1.
$$
This we prove just as in the proof of \cite[Lemma 4.10]{NN}:
We apply (\ref{beta-reln}) for $\beta_a$
successively to the five triples $(x,h_1,h_2)$,  
$(x, h_2,x^{-1})$,  $(xh_1,x^{-1},xh_2x^{-1})$, $(xh_1,h_2,x^{-1})$,  
$(x,x^{-1},xh_2x^{-1})$ and make a corresponding substitution each time.
It follows that
$B_{\D}$ satisfies the first equation in the definition of
$\omega$-bicharacter.
To see that it satisfies the second, we use the alternative definition
(\ref{BD-alternate}) of $B_{\D}$, and apply an analogous argument.
\end{proof}

We have associated, to each fusion subcategory $\D$ of $\C$, a 
canonical triple $(K_{\D},H_{\D},B_{\D})$, where $K_{\D}$ and $H_{\D}$
are normal subgroups of $G$ centralizing each other, and $B_{\D}$ is
a $G$-invariant $\omega$-bicharacter.
Next we associate to each triple a fusion subcategory.

\subsection{Construction of a fusion subcategory of 
$\Rep(D^{\omega}(G))$}

Let $K,H$ be normal subgroups of $G$ that centralize each other,
and $B:K\times H\rightarrow k^{\times}$ a $G$-invariant 
$\omega$-bicharacter.
Define 
\begin{equation}
\label{definition of twisted S(K,H,B)}
\begin{split}
\S{(K,H,B)} := 
&\text{ full Abelian subcategory of } \C  \text{ generated by } \\
&\left\{(a, \, \chi) \in \Gamma \,\, \vline \,
\begin{tabular}{l}
$a \in K \cap R \text{ and } \chi \text{ is an irreducible }
  \beta_a\text{-character of } C_G(a)$ \\
$\text{ such that } \chi(h) = B(a, \, h) \,\deg \chi, \text{ for all } h \in H$
\end{tabular}
\right\}.
\end{split}
\end{equation}
We will prove that $\S(K,H,B)$ is a fusion subcategory of $\C$ and
determine its centralizer.
We will need \cite[Lemma 4.4]{NN}, proved by Clifford theory
\cite[Theorem 7.8.1]{Ka} and Frobenius reciprocity 
\cite[Proposition 7.4.8]{Ka} for projective characters:

\begin{lemma}\label{lem-KE}
Let $E$ be a normal subgroup of a finite group $F$, and $\alpha$ a 2-cocycle on $F$.
Let $\Irr(F)$ denote the set of irreducible $\alpha$-projective characters
of $F$. Let $\rho$ be an $F$-invariant $\alpha|_{E\times E}$-projective
character of $E$ of degree 1.
Then 
$$
  \sum_{\chi\in\Irr(F) : \chi|_E = (\deg\chi)\rho}
   (\deg\chi)^2 = [F:E].
$$
\end{lemma}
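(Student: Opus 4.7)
The plan is to imitate, in the projective setting, the proof of the ordinary statement Lemma \ref{FR}. Namely, I would form the induced projective character $\rho^F := \Ind_E^F \rho$, decompose it into irreducibles via Frobenius reciprocity, identify the irreducible constituents via Clifford's theorem, and finally read off the claim by comparing degrees.

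In detail, since $\rho$ is an $\alpha|_{E \times E}$-projective character of $E$, it induces to a well-defined $\alpha$-projective character $\rho^F$ of $F$ of degree $[F:E] \deg \rho = [F:E]$. Projective Frobenius reciprocity \cite[Prop.~7.4.8]{Ka} gives
$$ \langle \chi, \rho^F\rangle_F \;=\; \langle \chi|_E, \rho\rangle_E \qquad\text{for every } \chi \in \Irr(F). $$
Projective Clifford theory \cite[Thm.~7.8.1]{Ka}, applied to the $F$-invariant degree-$1$ character $\rho$, tells us that for each $\chi \in \Irr(F)$ the restriction $\chi|_E$ is either disjoint from $\rho$ or a pure scalar multiple of it; in the latter case, comparing degrees forces $\chi|_E = (\deg\chi)\rho$, and then $\langle \chi|_E, \rho\rangle_E = \deg \chi$. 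Combining these two facts,
$$ \rho^F \;=\; \sum_{\chi \in \Irr(F):\, \chi|_E = (\deg \chi)\rho} (\deg \chi)\,\chi, $$
and evaluating both sides at the identity of $F$ yields the desired equality $[F:E] = \sum (\deg \chi)^2$.

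The only delicate point is to confirm that Clifford's theorem and Frobenius reciprocity transfer cleanly to the $\alpha$-projective setting, and that the $F$-invariance of $\rho$ together with the hypothesis $\deg\rho = 1$ collapses the Clifford decomposition of $\chi|_E$ to a single summand with multiplicity exactly $\deg \chi$. Both facts are classical and available in the references cited above, so there is no substantive obstruction; the argument is essentially a verbatim projective translation of Lemma \ref{FR}.
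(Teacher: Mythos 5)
Your argument is correct and is exactly the route the paper intends: it quotes this statement as \cite[Lemma 4.4]{NN}, noting that it is proved by Clifford theory \cite[Theorem 7.8.1]{Ka} and Frobenius reciprocity \cite[Proposition 7.4.8]{Ka} for projective characters, which is precisely your induction-and-decomposition argument for $\Ind_E^F\rho$.
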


To apply the lemma to our situation, replace $E$ by $H$ and $F$
by $C_G(a)$, and note that by $G$-invariance of $B$, the function
$B(a, - )$ is a $C_G(a)$-invariant $\beta_a|_{H\times H}$-projective
character of $H$ of degree 1. 
One consequence of the lemma then is that there exists at least
one $\chi\in\Irr(C_G(a))$ such that $(a,\chi)\in\S(K,H,B)$.
This implies in particular that $K$ is the support of $\S(K,H,B)$.

\begin{lemma}\label{lem-dimS}
The dimension of $\S(K,H,B)$ is $|K| [G:H]$.
\end{lemma}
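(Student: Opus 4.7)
The plan is to mimic the proof of Lemma \ref{dim S(K,H,B)} in the untwisted case, with Lemma \ref{FR} replaced by its projective analogue Lemma \ref{lem-KE}. The categorical dimension formula $d(a,\chi)=|K_a|\deg\chi$ carries over verbatim to the twisted setting (the dimensions of simple objects of $\Rep(D^\omega(G))$ are given by the same formula as in the untwisted case), so the starting computation is identical:
\begin{equation*}
\dim(\S(K,H,B)) = \sum_{(a,\chi)\in \S(K,H,B)\cap\Gamma}d(a,\chi)^2
= \sum_{a\in K\cap R}|K_a|^2\sum_{\chi:(a,\chi)\in\S(K,H,B)\cap\Gamma}(\deg\chi)^2.
\end{equation*}

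For each fixed $a\in K\cap R$, the inner sum runs over irreducible $\beta_a$-characters $\chi$ of $C_G(a)$ whose restriction to $H$ equals $(\deg\chi)\,B(a,\cdot)$. To apply Lemma \ref{lem-KE} with $F=C_G(a)$, $E=H$, and $\alpha=\beta_a|_{C_G(a)\times C_G(a)}$, I need two facts about $\rho:=B(a,\cdot)$: that it is a $\beta_a|_{H\times H}$-projective character of $H$ of degree $1$, and that it is $C_G(a)$-invariant. The first follows from the $\omega$-bicharacter property (i) in Definition \ref{inv-bichar} applied to the first argument $a$. The second is exactly where the $G$-invariance of $B$ enters: for $x\in C_G(a)$ one has $x^{-1}ax=a$, so the $G$-invariance identity reads
\begin{equation*}
B(a,h) \;=\; \frac{\beta_a(x,h)\beta_a(xh,x^{-1})}{\beta_a(x,x^{-1})}\,B(a,xhx^{-1}),
\end{equation*}
which is precisely the condition that $B(a,\cdot)$ is $C_G(a)$-invariant with respect to the $2$-cocycle $\beta_a|_{C_G(a)\times C_G(a)}$. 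Lemma \ref{lem-KE} then yields
\begin{equation*}
\sum_{\chi:(a,\chi)\in\S(K,H,B)\cap\Gamma}(\deg\chi)^2 \;=\; [C_G(a):H].
\end{equation*}

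Plugging back in and using $|K_a|\cdot|C_G(a)|=|G|$ together with $\sum_{a\in K\cap R}|K_a|=|K|$, one obtains
\begin{equation*}
\dim(\S(K,H,B)) \;=\; \sum_{a\in K\cap R}|K_a|^2\cdot\frac{|C_G(a)|}{|H|}
\;=\; \frac{|G|}{|H|}\sum_{a\in K\cap R}|K_a| \;=\; |K|[G:H].
\end{equation*}

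The only genuinely new ingredient compared with the untwisted proof is the verification that $B(a,\cdot)$ qualifies as input to Lemma \ref{lem-KE}; this is immediate once one writes the $G$-invariance formula with $x\in C_G(a)$ and recognizes that the scalar factor on the right-hand side is exactly the one defining $C_G(a)$-invariance for a $\beta_a$-projective character. All other steps are formal. I do not anticipate any obstacle beyond this bookkeeping check.
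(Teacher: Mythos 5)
Your proof is correct and follows exactly the paper's route: the paper's own proof of Lemma \ref{lem-dimS} simply repeats the computation of Lemma \ref{dim S(K,H,B)} with Lemma \ref{FR} replaced by Lemma \ref{lem-KE}, and the verification you spell out (that $B(a,\cdot)$ is a degree-one $\beta_a|_{H\times H}$-projective character of $H$ that is $C_G(a)$-invariant, via Definition \ref{inv-bichar}(i) and the $G$-invariance identity with $x\in C_G(a)$) is precisely the observation the paper records in the paragraph following Lemma \ref{lem-KE}.
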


\begin{proof}
The proof is the same as for $\omega=1$  (Lemma \ref{dim S(K,H,B)}), replacing
Lemma \ref{FR} by Lemma~\ref{lem-KE}.
\end{proof}

Let $(B^{\op})^{-1}: H\times K\rightarrow k^{\times}$ be the
function defined by $(B^{\op})^{-1}(h,k) = B(k,h)^{-1}$.
Then $(B^{\op})^{-1}$ is a $G$-invariant $\omega$-bicharacter,
by symmetry of Definition \ref{inv-bichar} (see Remark \ref{rem:G-inv}(ii)).

\begin{lemma}\label{lemma:twisted-prime}
$\S(K,H,B)$ is a fusion subcategory of $\C$ and 
$\S(K,H,B) ' = \S(H,K,(B^{\op})^{-1})$.
\end{lemma}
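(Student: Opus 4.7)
The plan is to follow the template of the untwisted case (Lemma \ref{centralizer of S(K,H,B)}): first I would show that the full Abelian subcategories $\S(K,H,B)$ and $\S(H,K,(B^{\op})^{-1})$ centralize each other in $\C$, then compute
\[
\dim \S(K,H,B)\cdot \dim \S(H,K,(B^{\op})^{-1}) = |K|[G:H]\cdot |H|[G:K] = |G|^2 = \dim \C
\]
using Lemma \ref{lem-dimS}, and finally invoke Lemma \ref{abelian subcat. is fusion} to conclude simultaneously that both are fusion subcategories and that they are each other's centralizer. The dimension computation and the wrap-up are formal; essentially all the work sits in the centralizing step.

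To verify the centralizing property I would apply Lemma \ref{lem-twisted}. Condition (i) there is immediate since $K$ and $H$ centralize each other element-wise by hypothesis. For condition (ii), fix simples $(a,\chi)\in\S(K,H,B)\cap\Gamma$ and $(b,\chi')\in\S(H,K,(B^{\op})^{-1})\cap\Gamma$. Since $H$ and $K$ are normal, $xy^{-1}byx^{-1}\in H$ and $yx^{-1}axy^{-1}\in K$ for every $x,y\in G$, so the defining properties of the two subcategories yield
\[
\chi(xy^{-1}byx^{-1}) = B(a,xy^{-1}byx^{-1})\deg\chi, \qquad \chi'(yx^{-1}axy^{-1}) = B(yx^{-1}axy^{-1},b)^{-1}\deg\chi'.
\]
Plugging these into the equation of Lemma \ref{lem-twisted}(ii), the condition reduces to the purely cohomological identity
\[
\frac{\beta_a(x,y^{-1}by)\beta_a(xy^{-1}by, x^{-1})\beta_b(y,x^{-1}ax)\beta_b(yx^{-1}ax,y^{-1})}{\beta_a(x,x^{-1})\beta_b(y,y^{-1})} = \frac{B(yx^{-1}axy^{-1},b)}{B(a,xy^{-1}byx^{-1})}.
\]

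To establish this identity I would use $G$-invariance of $B$ in both of its equivalent forms. Applying Definition \ref{inv-bichar} to the first argument with conjugator $xy^{-1}$ rewrites $B(yx^{-1}axy^{-1},b)$ as an explicit product of three $\beta_a$-cocycle values times $B(a,xy^{-1}byx^{-1})$; dually, applying the alternative form of $G$-invariance recorded just after Lemma \ref{lemma:nu-beta-reln} (with conjugator $yx^{-1}$) to the second argument expresses the same ratio in terms of $\beta_b$-cocycle values. Averaging these two descriptions so that the ratio splits symmetrically into $\beta_a$- and $\beta_b$-factors, I would then match each piece to the corresponding half of the left-hand side by repeated application of the 2-cocycle relation \eqref{beta-reln} on several triples, exactly in the style of the computation in the proof of \cite[Lemma 4.10]{NN} and the bicharacter calculation for $B_{\D}$ carried out above. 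This cohomological bookkeeping is the main technical obstacle: nothing conceptual is in play, but the cancellations are intricate and must be done carefully. Once the centralizing property is in hand, the dimension count and Lemma \ref{abelian subcat. is fusion} deliver both statements of the lemma at once.
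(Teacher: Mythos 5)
Your overall strategy is the same as the paper's: verify that $\S(K,H,B)$ and $\S(H,K,(B^{\op})^{-1})$ centralize each other via Lemma \ref{lem-twisted}, multiply the dimensions from Lemma \ref{lem-dimS} to get $\dim\C$, and finish with Lemma \ref{abelian subcat. is fusion}; your reduction of Lemma \ref{lem-twisted}(ii) to the displayed cohomological identity is also correct (condition (i) is indeed immediate, and the substitutions for $\chi$ and $\chi'$ are legitimate since $H$ and $K$ are normal). The weak point is the finish. You treat the remaining identity as an intricate cocycle computation, to be handled by ``averaging'' the two invariance descriptions of the ratio $B(yx^{-1}axy^{-1},b)/B(a,xy^{-1}byx^{-1})$ and matching pieces via \eqref{beta-reln}; as written this key step is not carried out, and the description is too vague to be checked, so the centralizing property --- which you yourself identify as the main obstacle --- is not actually established.

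In fact no new cocycle bookkeeping is needed: the two halves of the left-hand side of your identity are exactly $G$-invariance factors, and this is how the paper concludes in one line. By Definition \ref{inv-bichar} (invariance of $B$ in the second argument, conjugator $x$, element $y^{-1}by\in H$),
\[
\frac{\beta_a(x,y^{-1}by)\,\beta_a(xy^{-1}by,x^{-1})}{\beta_a(x,x^{-1})}\,B(a,xy^{-1}byx^{-1})=B(x^{-1}ax,\,y^{-1}by),
\]
while by $G$-invariance of $(B^{\op})^{-1}$ (equivalently, the first-argument form of invariance recorded after Lemma \ref{lemma:nu-beta-reln}, conjugator $y$, element $x^{-1}ax\in K$),
\[
\frac{\beta_b(y,x^{-1}ax)\,\beta_b(yx^{-1}ax,y^{-1})}{\beta_b(y,y^{-1})}\,(B^{\op})^{-1}(b,yx^{-1}axy^{-1})=(B^{\op})^{-1}(y^{-1}by,\,x^{-1}ax)=B(x^{-1}ax,\,y^{-1}by)^{-1}.
\]
Multiplying these two relations (with $B(x^{-1}ax,y^{-1}by)$ as the telescoping midpoint) yields your identity at once, and substituting back into Lemma \ref{lem-twisted}(ii) gives $\deg\chi\,\deg\chi'$ directly. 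Your proposed route --- expressing the $B$-ratio via invariance with conjugator $xy^{-1}$ (or $yx^{-1}$) and then reconciling the resulting pure $\beta_a$- or $\beta_b$-expression with the mixed expression by repeated use of \eqref{beta-reln} --- would amount to re-deriving the compatibility of $G$-invariance with composed conjugations (the computation around \eqref{eqn:more-invariance}); it can presumably be pushed through, but it is precisely the labor the definitions of $B_\D$ and of $G$-invariance were designed to absorb, and until it is executed your proof is incomplete at its central step. The rest of your argument (the $G$-invariance of $(B^{\op})^{-1}$ via Remark \ref{rem:G-inv}(ii), the dimension count, and the appeal to Lemma \ref{abelian subcat. is fusion}) matches the paper.
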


\begin{proof}
First we show that $\S(K,H,B)$ and $\S(H,K,(B^{\op})^{-1})$
centralize each other.
Let $(a,\chi)\in \S(K,H,B)\cap\Gamma$ and $(b,\chi')\in\S(H,K,
(B^{\op})^{-1})\cap\Gamma$.
Then the conjugacy classes $K_a$ and $K_b$ centralize each other.
We check Lemma \ref{lem-twisted}(ii), using the two
equations (\ref{BD-defn}) and (\ref{BD-alternate}):
\begin{eqnarray*}
&&\frac{\beta_a(x,y^{-1}by)\beta_a(xy^{-1}by,x^{-1})\beta_b
   (y,x^{-1}ax)\beta_b(yx^{-1}ax,y^{-1})}{\beta_a(x,x^{-1})
   \beta_b(y,y^{-1})} \chi(xy^{-1}byx^{-1})\chi'(yx^{-1}axy^{-1})\\
    && = \ \frac{\beta_a(x,y^{-1}by)\beta_a(xy^{-1}by,x^{-1})\beta_b
   (y,x^{-1}ax)\beta_b(yx^{-1}ax,y^{-1})}{\beta_a(x,x^{-1})
   \beta_b(y,y^{-1})} \\
 &&\hspace{2cm}\cdot B(a,xy^{-1}byx^{-1})\deg\chi (B^{\op})^{-1}(b,yx^{-1}
           axy^{-1})\deg\chi' .
\end{eqnarray*}
By $G$-invariance, this may be rewritten as
$$
   B(x^{-1}ax,y^{-1}by)\deg\chi B^{-1}(x^{-1}ax,y^{-1}by)
  \deg\chi' = \deg\chi\deg\chi',
$$
as desired.
Therefore $\S(K,H,B)$ and $\S(H,K, (B^{\op})^{-1})$ centralize
each other.

By Lemma \ref{lem-dimS},
$$
  \dim(\S(K,H,B))\dim(\S(H,K,(B^{\op})^{-1}) = |G|^2 = \dim \C.
$$
By Lemma \ref{abelian subcat. is fusion}, $\S(K,H,B)$ is a
fusion subcategory of $\C$ and $\S(K,H,B)'=\S(H,K,(B^{\op})^{-1})$.
\end{proof}

\begin{theorem}\label{theorem:twisted-main}
The assignments $\D\mapsto (K_{\D},H_{\D},B_{\D})$ and
$(K,H,B)\mapsto \S(K,H,B)$ are inverses of each other.
Thus there is a bijection between the set of fusion subcategories
of $\C$ and triples $(K,H,B)$ where $K,H$ are normal subgroups
of $G$ that centralize each other and $B: K\times H \mapsto k^{\times}$
is a $G$-invariant $\omega$-bicharacter.
\end{theorem}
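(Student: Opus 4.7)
The plan is to mirror the proof of Theorem~\ref{bijection} in the untwisted case, organizing the argument into the two assertions that the assignments are mutually inverse, and relying on the cocycle machinery of Section~\ref{Sect 5} (Lemmas~\ref{lem-twisted}, \ref{lem-E}, \ref{lem-KE}, \ref{lemma:twisted-prime}) to handle the technical bookkeeping.

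First I would verify that $(K_{\S(K,H,B)},H_{\S(K,H,B)},B_{\S(K,H,B)}) = (K,H,B)$. Lemma~\ref{lem-KE}, applied with $E=H$, $F=C_G(a)$, and $\rho = B(a,-)$ (which is $C_G(a)$-invariant by $G$-invariance of $B$), guarantees that for each $a\in K\cap R$ there exists $\chi$ with $(a,\chi)\in\S(K,H,B)\cap\Gamma$, so the support of $\S(K,H,B)$ is exactly $K$. By Lemma~\ref{lemma:twisted-prime}, $\S(K,H,B)' = \S(H,K,(B^{\op})^{-1})$ has support $H$, and Proposition~\ref{prop-HDKD} gives $H_{\S(K,H,B)} = K_{\S(K,H,B)'} = H$. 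For the bicharacter, specializing the defining formula~\eqref{BD-defn} to $x=e$ and using $\beta_a(e,h) = \beta_a(h,e) = \beta_a(e,e) = 1$, we get $B_{\S(K,H,B)}(a,h) = \chi(h)/\deg\chi = B(a,h)$ for $a\in K\cap R$ and $h\in H$; since both $B_{\S(K,H,B)}$ and $B$ are $G$-invariant in the sense of Definition~\ref{inv-bichar}, they agree on all of $K\times H$.

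Next I would verify that $\S(K_\D,H_\D,B_\D) = \D$. For the inclusion $\S(K_\D,H_\D,B_\D)\subseteq \D$, take any simple $(a,\chi)\in \S(K_\D,H_\D,B_\D)\cap\Gamma$ and choose $(a,\chi')\in \D\cap\Gamma$, which exists because $a\in K_\D$. The defining equation~\eqref{definition of twisted S(K,H,B)} gives $\chi(h) = B_\D(a,h)\deg\chi$ for $h\in H_\D$, while the definition~\eqref{BD-defn} of $B_\D$ applied at $x=e$ gives $B_\D(a,h) = \chi'(h)/\deg\chi'$ for $h\in H_\D = K_{\D'}$. Hence $\chi|_{K_{\D'}}/\deg\chi = \chi'|_{K_{\D'}}/\deg\chi'$, and Lemma~\ref{lem-E} forces $(a,\chi)\in\D$. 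Applying the same argument to $\D'$ yields $\S(H_\D,K_\D,B_{\D'}) = \S(K_{\D'},H_{\D'},B_{\D'}) \subseteq \D'$.

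The argument concludes by a dimension count exactly as in the untwisted case: using Lemma~\ref{lem-dimS} and equation~\eqref{dimension} in the modular category $\C$, we have
\[
|G|^2 = \dim(\S(K_\D,H_\D,B_\D))\cdot\dim(\S(H_\D,K_\D,B_{\D'})) \leq \dim(\D)\cdot\dim(\D') = |G|^2,
\]
forcing equality $\S(K_\D,H_\D,B_\D)=\D$. The main obstacle in carrying out this plan is the verification at $x=e$ that $B_\D$ recovers $\chi/\deg\chi$ on $H_\D$; one must check that the cocycle factors in~\eqref{BD-defn} collapse correctly (using normalization of $\omega$ and that $\beta_a(e,-)=\beta_a(-,e)=1$), and then extend the equality $B_{\S(K,H,B)} = B$ from $(K\cap R)\times H$ to $K\times H$ by invoking the $G$-invariance property of Definition~\ref{inv-bichar} for both bicharacters — all other ingredients are already in place in Section~\ref{Sect 5}.
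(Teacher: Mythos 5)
Your proposal is correct and follows essentially the same route as the paper's proof: establish $K$ and $H$ as supports via Lemma~\ref{lem-KE} and Lemma~\ref{lemma:twisted-prime}, recover $B$ from the $x=e$ specialization of \eqref{BD-defn} plus $G$-invariance, use Lemma~\ref{lem-E} for the inclusion $\S(K_\D,H_\D,B_\D)\subseteq\D$, and finish with the dimension count from Lemma~\ref{lem-dimS}. The only (immaterial) difference is that you phrase the last step as the inequality chain from the untwisted Theorem~\ref{bijection}, whereas the paper argues via forcing $\S(H_\D,K_\D,B_{\D'})=\S(K_\D,H_\D,B_\D)'$; both yield the same conclusion.
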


\begin{proof}
First we show that $(K_{\S(K,H,B)}, H_{\S(K,H,B)}, B_{\S(K,H,B)})
=(K,H,B)$. We have already observed that $K$ is the support of
$\S(K,H,B)$, so $K_{\S(K,H,B)}=K$.
It follows that $H_{\S(K,H,B)}$, which is the support of
$\S(K,H,B)'=\S(H,K,(B^{\op})^{-1})$, is $H$.
Let $a\in K\cap R$, $(a,\chi)\in\S(K,H,B)\cap\Gamma$, and $h\in H$.
Then by the definition of $B_{\S(K,H,B)}$, we have
$$
  B_{\S(K,H,B)}(a,h)=\frac{\chi(h)}{\deg\chi}.
$$
On the other hand, since $(a,\chi)\in\S(K,H,B)$, we have 
$\chi(h) = B(a,h)\deg\chi$, so $B(a,h)=B_{\S(K,H,B)}(a,h)$.
By $G$-invariance, this implies $B=B_{\S(K,H,B)}$.

Now we show that $\S(K_{\D},H_{\D},B_{\D})=\D$.
Let $(a,\chi)\in \S(K_{\D},H_{\D},B_{\D})\cap\Gamma$.
Then for all $h\in H_{\D}$, by the definitions of $\S(K_{\D},H_{\D},
B_{\D})$ and of $B_{\D}$, respectively, we have
$$
  \frac{\chi(h)}{\deg\chi} = B_{\D}(a,h) = \frac{\chi'(h)}
   {\deg\chi'}
$$
for any $\chi'$ such that $(a,\chi')$ is simple in $\D$.
By Proposition \ref{prop-HDKD}, we have $K_{\D'}=H_{\D}$, so the
above equation holds for all $h\in K_{\D'}$.
By Lemma \ref{lem-E}, we have $(a,\chi)\in\D$.
We have shown $\S(K_{\D},H_{\D},B_{\D})\subseteq \D$.
Similarly $\S(K_{\D'},H_{\D'},B_{\D'}) = \S(H_{\D},K_{\D},B_{\D'})\subseteq\D'$.
By Lemma \ref{lem-dimS},
$$
  \dim(\S(K_{\D},H_{\D},B_{\D}))\dim(\S(H_{\D},K_{\D},B_{\D'}))
   =|G|^2 = \dim\C.
$$
This forces $\S(H_{\D},K_{\D},B_{\D'}) = \S(K_{\D},H_{\D},B_{\D})'$, so
that $\D'\subseteq \S(H_{\D},K_{\D},B_{\D'})$.
Therefore $\D' = \S(H_{\D},K_{\D},B_{\D'})$, and by symmetry,
$\D = \S(K_{\D},H_{\D},B_{\D})$.
\end{proof}

\begin{remark}\label{remark:braided}
Let $\mathcal C$ be any braided group-theoretical fusion category
(see Section \ref{fusion cats.}).
The braiding yields a canonical embedding of $\mathcal C$ into its 
center $\Z({\mathcal C})$.
(See \cite{K} for the center construction.)
Since $\mathcal C$ is group-theoretical, $\Z({\mathcal C})$ is equivalent 
to $\Z(\Vec^{\omega}_G)\cong \Rep(D^{\omega}(G))$ for some finite
group $G$ and 3-cocycle $\omega$. (This follows from 
\cite[Sect.\ 8.8]{ENO}.
See also the proof of \cite[Thm.\ 1.2]{Na} and \cite[Prop.\ 3.3]{Mj}.)
Therefore $\mathcal C$ may be realized as a fusion subcategory of
$\Rep(D^{\omega}(G))$.
By Theorem \ref{theorem:twisted-main}, $\mathcal C$ is equivalent
to a category of the form ${\mathcal S}(K,H,B)$ defined in
(\ref{definition of twisted S(K,H,B)}) for some normal subgroups
$K,H$ of $G$ that centralize each other and $G$-invariant 
$\omega$-bicharacter $B$ on $K\times H$.
\end{remark}

\end{section}

\begin{section}
{Some invariants and the lattice of fusion subcategories of
$\Rep(D^{\omega}(G))$}
\label{sec:lattice}

\begin{subsection}{The lattice}

Let $K,H,K', H'$ be normal subgroups of $G$ such that $K$ and $H$
centralize each other, and $K'$ and $H'$ centralize each other.
Let $B:K\times H\rightarrow k^{\times}$ and $B': K'\times H'\rightarrow
k^{\times}$ be $G$-invariant $\omega$-bicharacters.

\begin{proposition}
\label{prop:subcat prime}
$\S(K,H,B) \subseteq \S(K',H',B')$ if and only if 
$K \subseteq K', \, H' \subseteq H$, and $B|_{K \times H'}
= B'|_{K \times H'}$.
\end{proposition}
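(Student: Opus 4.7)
The plan is to mirror the proof of Proposition \ref{prop:subcat}, but since we cannot invoke the geometric interpretation in Remark \ref{S as bundles} in the twisted setting, I would work directly from the defining condition \eqref{definition of twisted S(K,H,B)} together with the invariants from Theorem \ref{theorem:twisted-main} and the centralizer formula from Lemma \ref{lemma:twisted-prime}.

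For the easier direction, assume $K \subseteq K'$, $H' \subseteq H$, and $B|_{K\times H'}=B'|_{K\times H'}$. I would take any simple $(a,\chi) \in \S(K,H,B)\cap \Gamma$: the inclusion $K \subseteq K'$ gives $a \in K' \cap R$, and for each $h' \in H' \subseteq H$ the calculation $\chi(h') = B(a,h')\deg\chi = B'(a,h')\deg\chi$ exhibits $(a,\chi) \in \S(K',H',B')$.

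For the forward direction, from $\S(K,H,B) \subseteq \S(K',H',B')$ one immediately obtains $K \subseteq K'$ since $K$, $K'$ are the supports of these two categories. Taking centralizers reverses inclusion, so by Lemma \ref{lemma:twisted-prime},
\[
\S(H',K',(B'^{\op})^{-1}) = \S(K',H',B')' \subseteq \S(K,H,B)' = \S(H,K,(B^{\op})^{-1}),
\]
and reading off supports yields $H' \subseteq H$. For the bicharacter agreement I would fix $a \in K \cap R$ and apply Lemma \ref{lem-KE} (to $F=C_G(a)$, $E=H$, and the $C_G(a)$-invariant $\beta_a|_{H\times H}$-projective character $B(a,\cdot)$ of degree $1$) to produce at least one irreducible $\beta_a$-character $\chi$ with $(a,\chi) \in \S(K,H,B)\cap\Gamma$; by the assumed inclusion, $(a,\chi) \in \S(K',H',B')$ as well. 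Comparing the two conditions imposed on $\chi$ at any $h' \in H'$ gives $B(a,h') = B'(a,h')$, so $B$ and $B'$ agree on $(K\cap R)\times H'$.

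To extend to all of $K \times H'$, I would use $G$-invariance: for $k=xax^{-1}$ with $a\in K\cap R$, $x\in G$, and $h'\in H'$, normality of $H'$ gives $x^{-1}h'x\in H'$, and Definition \ref{inv-bichar} expresses both $B(k,h')$ and $B'(k,h')$ as the same $\beta_a$-cocycle multiple of $B(a,x^{-1}h'x)$ and $B'(a,x^{-1}h'x)$, respectively; the pointwise agreement already established then propagates to $B(k,h')=B'(k,h')$. The only substantive obstacle in the entire argument is producing the character $\chi$ in the previous paragraph, and Lemma \ref{lem-KE} handles this exactly; once that existence is in hand, everything reduces to a direct unwinding of definitions.
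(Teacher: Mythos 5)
Your proof is correct and follows essentially the same route as the paper, whose (one-line) proof says the statement follows from the definitions together with the fact, supplied by Lemma \ref{lem-KE}, that $K$ is the support of $\S(K,H,B)$ --- exactly the existence-of-$\chi$ step you identify as the crux, after which everything is unwinding of \eqref{definition of twisted S(K,H,B)} and $G$-invariance. The only cosmetic difference is that you deduce $H'\subseteq H$ by passing to centralizers via Lemma \ref{lemma:twisted-prime}, whereas one can also read it off directly from $\S(K,H,B)\cap\Rep(G)=\Rep(G/H)$; either way the argument is sound.
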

\begin{proof}
The statement follows from the definitions since $K$ is the support of 
$S(K,H,B)$ by Lemma \ref{lem-KE}.
\end{proof}

Let $\varphi_{B,B'}: K\cap K'\rightarrow\widehat{H\cap H'}$ be the function
defined by 
$$
  \varphi_{B,B'}(a) = B(a,\cdot)|_{H\cap H'}(B')^{-1}(a,\cdot)|_{H\cap H'},
$$
where $\widehat{H\cap H'}$ denotes the group of group homomorphisms
from $H\cap H'$ to $k^{\times}$.
It is straightforward to check that for each $a\in K\cap K'$, 
$\varphi_{B,B'}(a)$ is indeed a group homomorphism from $H\cap H'$ to $k^{\times}$,
and that $\varphi_{B,B'}$ is itself a group homomorphism from $K\cap K'$
to $\widehat{H\cap H'}$. Let $\psi_{B,B'}: \ker\varphi_{B,B'}\times HH'
\rightarrow k^{\times}$ be the function defined by 
$$
  \psi_{B,B'}(a,hh') = \beta_a^{-1}(h,h')B(a,h)B'(a,h')
$$
for all $a\in K$, $h\in H$, $h'\in H'$.
Note that $\psi_{B,B'}$ is well-defined: 
Suppose $h_1h_1' = h_2h_2'$ for elements $h_1,h_2\in H$,
$h_1',h_2'\in H'$. 
We must check that
$$
  \beta_a^{-1}(h_1,h_1')B(a,h_1)B'(a,h_1') =
 \beta_a^{-1}(h_2,h_2')B(a,h_2)B'(a,h_2'),
$$
or equivalently,
$$
  \beta_a^{-1}(h_1,h_1')B(a,h_2)^{-1}B(a,h_1) =
  \beta_a^{-1}(h_2,h_2')B'(a,h_2')B'(a,h_1')^{-1}.
$$
Since $B,B'$ are $\omega$-bicharacters, we have
$$
  B(a,h_2)B(a,h_2^{-1})=\beta_a(h_2,h_2^{-1})B(a,1)=\beta_a(h_2,h_2^{-1}),
$$
so $B(a,h_2)^{-1}=\beta_a^{-1}(h_2,h_2^{-1})B(a,h_2^{-1})$, and similarly,
$B'(a,h_1')^{-1}=\beta_a^{-1}(h_1',(h_1')^{-1})B(a,(h_1')^{-1})$.
Thus we must show that
$$
 \frac{\beta_a(h_2^{-1},h_1)}{\beta_a(h_1,h_1')\beta_a(h_2,h_2^{-1})}
    B(a,h_2^{-1}h_1) = \frac{\beta_a(h_2',(h_1')^{-1})}{\beta_a(h_2,h_2')
   \beta_a(h_1',(h_1')^{-1})} B'(a,h_2'(h_1')^{-1}).
$$
Now, $h_2^{-1}h_1=h_2'(h_1')^{-1} \in H\cap H'$, and since $a\in\ker\varphi_{B,B'}$,
we have $B(a,h_2^{-1}h_1)=B'(a,h_2'(h_1')^{-1})$. Thus we must show that
$$
  1 = \frac{\beta_a(h_1,h_1')\beta_a(h_2,h_2^{-1})\beta_a(h_2',(h_1')^{-1})}
   {\beta_a(h_2^{-1},h_1)\beta_a(h_2,h_2')\beta_a(h_1',(h_1')^{-1})}.
$$
This follows by applying the 2-cocycle identity for $\beta_a$ successively
to the triples $(h_2^{-1},h_1,h_1')$, $(h_2^{-1},h_2,h_2')$,
and $(h_2',(h_1')^{-1},h_1')$.
Therefore $\psi_{B,B'}$ is well-defined.

%
%

\begin{lemma}
The function $\psi_{B,B'}$ is a  $G$-invariant
$\omega$-bicharacter on $\ker\varphi_{B,B'}\times HH'$.
\end{lemma}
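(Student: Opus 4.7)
The plan is to verify each of the three defining properties of a $G$-invariant $\omega$-bicharacter (Definition~\ref{inv-bichar}) for $\psi_{B,B'}$ in turn.

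First I would verify multiplicativity in the first argument (Definition~\ref{inv-bichar}(ii)). Taking $a, a' \in \ker\varphi_{B,B'}$ and $hh' \in HH'$, and applying the definition of $\psi_{B,B'}$ together with property (ii) of $\omega$-bicharacters for both $B$ and $B'$, the claim $\psi_{B,B'}(aa', hh') = \beta_{hh'}(a, a') \psi_{B,B'}(a, hh') \psi_{B,B'}(a', hh')$ reduces to the scalar identity
\[
  \frac{\beta_{aa'}(h, h')}{\beta_a(h, h') \beta_{a'}(h, h')} = \frac{\beta_h(a, a') \beta_{h'}(a, a')}{\beta_{hh'}(a, a')}.
\]
Since $\ker\varphi_{B,B'} \subseteq K \cap K'$, the elements $a, a'$ centralize both $h \in H$ and $h' \in H'$; hence by \eqref{four-relns} every $\beta$ on the left may be replaced by the corresponding $\gamma$. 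The resulting identity is then precisely \eqref{gamma beta relation 1} applied to the quadruple $(a, a', h, h')$, where the conjugation terms collapse because $a$ commutes with $h$ and $h'$.

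Next I would verify multiplicativity in the second argument (Definition~\ref{inv-bichar}(i)). Given $x_1, x_2 \in HH'$, fix decompositions $x_i = h_i h_i'$ with $h_i\in H$, $h_i'\in H'$. Normality of $H'$ gives $x_1 x_2 = (h_1 h_2)(\tilde h_1' h_2')$ with $\tilde h_1' := h_2^{-1} h_1' h_2 \in H'$, which is the required factorization as an element of $H$ times an element of $H'$. Expanding both sides of $\psi_{B,B'}(a, x_1 x_2) = \beta_a^{-1}(x_1, x_2) \psi_{B,B'}(a, x_1) \psi_{B,B'}(a, x_2)$ via the defining formula, applying properties (i) and (ii) of $\omega$-bicharacters to $B$ and $B'$, and using $G$-invariance of $B'$ together with the fact that $h_2 a h_2^{-1} = a$ (since $a \in K$ and $h_2 \in H$ centralize each other) to relate $B'(a, \tilde h_1')$ back to $B'(a, h_1')$, the identity reduces to a scalar relation among values of $\beta_a$ that follows from repeated applications of the 2-cocycle relation \eqref{beta-reln}.

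Finally I would verify $G$-invariance. Fix $g \in G$, $a \in \ker\varphi_{B,B'}$, and $hh' \in HH'$; normality of $H$ and $H'$ gives $g(hh')g^{-1} = (ghg^{-1})(gh'g^{-1}) \in HH'$ already in standard form. Applying the $G$-invariance formulas for $B$ and $B'$ to the two factors separately, together with the defining formula for $\psi_{B,B'}$, the claim reduces to a pure $\beta_a$-cocycle identity which follows from several applications of \eqref{beta-reln} to the triples $(g, h, h')$, $(gh, h', g^{-1})$, $(gh, g^{-1}, gh'g^{-1})$, and $(g, g^{-1}, gh'g^{-1})$, entirely analogously to the $G$-invariance verification for $B_{\D}$ earlier in the section. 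The main obstacle throughout is the bookkeeping of the many $\beta$-cocycle terms that arise, particularly in the second argument property, where conjugation by $h_2$ introduces extra $\beta_a$ factors that must be canceled by a cascade of applications of \eqref{beta-reln}, in the same spirit as the computation in \cite[Lemma 4.10]{NN}.
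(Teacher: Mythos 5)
Your proposal is correct and follows essentially the same route as the paper: reduce the first-argument property to \eqref{gamma beta relation 1} via \eqref{four-relns} (using that $a,a'$ centralize $h,h'$), handle the second-argument property by rewriting $x_1x_2=(h_1h_2)\bigl(h_2^{-1}h_1'h_2\,h_2'\bigr)$ and invoking $G$-invariance of $B'$ together with $h_2\in C_G(a)$, and verify $G$-invariance by a cascade of applications of \eqref{beta-reln}. The only quibble is bookkeeping: in the $G$-invariance step one further application of \eqref{beta-reln}, to the triple $(g,h',g^{-1})$, is needed beyond the four you list (the paper uses five such tuples).
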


\begin{proof}
We first prove that identity (ii) in Definition \ref{inv-bichar} holds
for $\psi_{B,B'}$:
Let $a,a'\in\ker\varphi_{B,B'}$, and $h\in H$, $h'\in H'$.
We must prove
$$
  \psi_{B,B'}(aa',hh')=\beta_{hh'}(a,a')\psi_{B,B'}(a,hh') 
   \psi_{B,B'}(a',hh').
$$
The left side is 
$$
  \frac{1}{\beta_{aa'}(h,h')}B(aa',h)B'(aa',h')=
  \frac{\beta_h(a,a')\beta_{h'}(a,a')}{\beta_{aa'}(h,h')}
   B(a,h)B(a',h)B'(a,h')B'(a',h').
$$
The right side is 
$$
   \frac{\beta_{hh'}(a,a')}{\beta_a(h,h')\beta_{a'}(h,h')}
  B(a,h)B'(a,h')B(a',h)B'(a',h').
$$
The two are equal if and only if
$$
  1 = \frac{\beta_h(a,a')\beta_{h'}(a,a')\beta_a(h,h')\beta_{a'}(h,h')}
    {\beta_{aa'}(h,h')\beta_{hh'}(a,a')}.
$$
This is true  by the identity
\eqref{gamma beta relation 1} and by \eqref{four-relns} (note
$a,a'$ centralize $h,h'$).

Next we prove that $\psi_{B,B'}$ is $G$-invariant:
We must prove that for all $x\in G$,
$$
  \psi_{B,B'}(x^{-1}ax,hh')= \frac{\beta_a(x,hh')\beta_a(xhh',x^{-1})}
   {\beta_a(x,x^{-1})} \psi_{B,B'}(a,xhh'x^{-1}).
$$
By the definition of $\psi_{B, B'}$ and $G$-invariance of $B,B'$,
the left side is
\begin{eqnarray*}
 && \frac{1}{\beta_{x^{-1}ax}(h,h')}B(x^{-1}ax,h)B'(x^{-1}ax,h')\\
 &&\hspace{.5cm}  
 =\frac{\beta_a(x,h)\beta_a(xh,x^{-1})\beta_a(x,h')\beta_a(xh',x^{-1})}
  {\beta_{x^{-1}ax}(h,h')\beta_a(x,x^{-1})^2}
   B(a,xhx^{-1})B'(a,xh'x^{-1}).
\end{eqnarray*}
The right side is
$$
  \frac{\beta_a(x,hh')\beta_a(xhh',x^{-1})}{\beta_a(x,x^{-1})\beta_a(xhx^{-1},
    xh'x^{-1})} B(a,xhx^{-1})B'(a,xh'x^{-1}).
$$
So the left side equals the right side if and only if
$$
  1=\frac{\beta_a(x,h)\beta_a(xh,x^{-1})\beta_a(x,h')\beta_a(xh',x^{-1})
   \beta_a(xhx^{-1},xh'x^{-1})} {\beta_{x^{-1}ax}(h,h')\beta_a(x,x^{-1})
   \beta_a(x,hh')\beta_a(xhh',x^{-1})}.
$$
This follows by applying the identity (\ref{beta-reln}) to the
tuples $(a,x,h,h')$, $(a,xh,x^{-1},xh'x^{-1})$, $(a,x,x^{-1},xh'x^{-1})$,
$(a,xh,h',x^{-1})$, and $(a,x,h',x^{-1})$.

Finally we prove that identity (i) in Definition \ref{inv-bichar} holds
for $\psi_{B,B'}$. 
Let $a\in\ker\varphi_{B,B'}$ and $y_1,z_1\in H$, $y_2,z_2\in H'$,
$y=y_1y_2$, $z=z_1z_2$. 
We show that
$$
   \psi_{B,B'}(a,yz) =\beta_a^{-1}(y,z)\psi_{B,B'}(a,y)\psi_{B,B'}
   (a,z).
$$
Since $B'$ is $G$-invariant and $z_1\in C_G(a)$, the left side 
$\psi_{B,B'}(a,(y_1z_1)(z_1^{-1}y_2z_1z_2))$ is equal to 
\begin{eqnarray*}
&=&
   \beta_a^{-1}(y_1z_1, z_1^{-1}y_2z_1z_2) B(a,y_1z_1)B'(a,z_1^{-1}y_2z_1z_2)\\
   &=&\beta_a^{-1}(y_1z_1,z_1^{-1}y_2z_1z_2)\beta_a^{-1}(y_1,z_1)\beta_a^{-1}
  (z_1^{-1}y_2z_1,z_2) B(a,y_1)B(a,z_1)B'(a,z_1^{-1}y_2z_1)B'(a,z_2)\\
&=& \frac{\beta_a(z_1^{-1},z_1)}{\beta_a(y_1z_1, z_1^{-1}y_2z_1z_2)\beta_a(y_1,z_1)
  \beta_a(z_1^{-1}y_2z_1,z_2) \beta_a(z_1^{-1},y_2)\beta_a(z_1^{-1}y_2,z_1)}
  \\
  &&\hspace{4cm}\cdot B(a,y_1)B(a,z_1)B'(a,y_2)B'(a,z_2).
\end{eqnarray*}
The right side is
$$
 \beta_a^{-1}(y_1y_2,z_1z_2)\beta_a^{-1}(y_1,y_2)\beta_a^{-1}(z_1,z_2)
  B(a,y_1)B'(a,y_2)B(a,z_1)'(a,z_2).
$$
The two are equal if and only if
$$
  1 = \frac{\beta_a(z_1^{-1},z_1)\beta_a(y_1,y_2)\beta_a(z_1,z_2)
   \beta_a(y_1y_2,z_1z_2)} {\beta_a(y_1z_1, z_1^{-1}y_2z_1z_2)
  \beta_a(y_1,z_1)\beta_a(z_1^{-1}y_2z_1, z_2)\beta_a(z_1^{-1},y_2)
  \beta_a(z_1^{-1}y_2,z_1)}.
$$
This follows from the identity $\beta_a(z_1^{-1},z_1)=\beta_a(z_1,z_1^{-1})$
(obtained by applying the 2-cocycle identity to $(z_1^{-1},z_1,z_1^{-1})$)
and application of the 2-cocycle identity for $\beta_a$ to the triples
$(y_1,z_1,z_1^{-1})$, $(z_1^{-1},y_2,z_1)$, $(z_1^{-1},y_2z_1,z_2)$,
$(y_2,z_1,z_2)$, $(y_1z_1,z_1^{-1}y_2,z_1z_2)$, and $(y_1z_1,z_1^{-1},y_2)$.
\end{proof}

\begin{proposition}\label{prop:intersection and join}
\begin{enumerate}
\item[(i)]
${\mathcal S}(K,H,B)\cap {\mathcal S}(K',H',B') = {\mathcal S}(\ker\varphi_{B,B'},
HH',\psi_{B,B'})$.
\item[(ii)] ${\mathcal S}(K,H,B)\vee {\mathcal S}(K',H',B') =
{\mathcal S}(KK', \ker\varphi_{B^{\op}, (B')^{\op}}, (\psi_{B^{\op},(B')^{\op}})^{\op}))$.
\end{enumerate}
\end{proposition}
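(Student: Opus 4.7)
The plan is to mirror the argument used in the untwisted case (Proposition~\ref{intersection and join}), adapting the bookkeeping to the $\omega$-projective setting. For part (i), I would argue by double inclusion on the level of simple objects. Take a simple $(a,\chi)$ of $\S(K,H,B)\cap\S(K',H',B')$. Then $a\in K\cap K'\cap R$, $\chi$ is a $\beta_a$-character of $C_G(a)$, and $\chi|_H=B(a,\cdot)\deg\chi$, $\chi|_{H'}=B'(a,\cdot)\deg\chi$. Restricting both identities to $H\cap H'$ forces $B(a,\cdot)|_{H\cap H'}=B'(a,\cdot)|_{H\cap H'}$, i.e. $a\in\ker\varphi_{B,B'}$. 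Moreover for $h\in H,\, h'\in H'$, the $\beta_a$-character property of $\chi$ gives
\[
 \chi(hh') \;=\; \beta_a^{-1}(h,h')\,\chi(h)\chi(h')/\deg\chi \;=\; \beta_a^{-1}(h,h')B(a,h)B'(a,h')\deg\chi \;=\; \psi_{B,B'}(a,hh')\deg\chi,
\]
so $(a,\chi)\in\S(\ker\varphi_{B,B'},HH',\psi_{B,B'})$. Conversely, if $(a,\chi)$ lies in the latter category, specializing $hh'$ to an element of $H$ (take $h'=e$) and using $\beta_a(h,e)=1$ and $B'(a,e)=1$ yields $\chi|_H=B(a,\cdot)\deg\chi$; symmetrically for $H'$. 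Hence $(a,\chi)\in \S(K,H,B)\cap\S(K',H',B')$.

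For part (ii), I would avoid a direct description of the join and instead dualize: by M\"uger's double centralizer theorem,
\[
 \S(K,H,B)\vee\S(K',H',B') \;=\; \bigl(\S(K,H,B)'\cap \S(K',H',B')'\bigr)'.
\]
Using Lemma~\ref{lemma:twisted-prime}, $\S(K,H,B)'=\S(H,K,(B^{\op})^{-1})$ and similarly for the primed data. Applying part (i) to these two subcategories gives
\[
 \S(H,K,(B^{\op})^{-1})\cap \S(H',K',((B')^{\op})^{-1}) \;=\; \S\bigl(\ker\varphi_{(B^{\op})^{-1},((B')^{\op})^{-1}},\,KK',\,\psi_{(B^{\op})^{-1},((B')^{\op})^{-1}}\bigr).
\]
Taking centralizers once more via Lemma~\ref{lemma:twisted-prime} turns the triple $(K'',H'',B'')$ into $(H'',K'', ((B'')^{\op})^{-1})$; a direct manipulation then identifies $\ker\varphi_{(B^{\op})^{-1},((B')^{\op})^{-1}}=\ker\varphi_{B^{\op},(B')^{\op}}$ and $((\psi_{(B^{\op})^{-1},((B')^{\op})^{-1}})^{\op})^{-1}=(\psi_{B^{\op},(B')^{\op}})^{\op}$, giving the claimed formula.

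The main obstacle is the verification in part (i) that the value of $\chi$ on a general product $hh'\in HH'$ is controlled by $\psi_{B,B'}(a,hh')$, independently of the chosen decomposition $h h'$. This is exactly the content of the well-definedness argument for $\psi_{B,B'}$ carried out above the proposition, so the key point is to apply that normalization consistently while exploiting the $\beta_a$-character relation for $\chi$. The rest of the argument is essentially formal: $G$-invariance of all the bicharacters in play lets us pass from representatives $a\in R$ to arbitrary conjugates, and the centralizer and dimension identities reduce the join calculation to part (i).
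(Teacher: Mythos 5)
Your proposal is correct and follows essentially the same route as the paper, which simply states that the proof is identical to the untwisted case (Proposition~\ref{intersection and join}) with Lemma~\ref{lemma:twisted-prime} replacing Lemma~\ref{centralizer of S(K,H,B)}: double inclusion on simple objects for (i), and dualization of the join through centralizers plus a direct manipulation of the resulting triple for (ii). One small point of hygiene: the identity $\chi(hh')=\beta_a^{-1}(h,h')\chi(h)\chi(h')/\deg\chi$ is not a general property of $\beta_a$-characters but holds here because $\chi|_H=B(a,\cdot)\deg\chi$ and $\chi|_{H'}=B'(a,\cdot)\deg\chi$ force the underlying projective representation to act by scalars on $H$ and $H'$ --- exactly the implicit justification in the paper's untwisted argument.
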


\begin{proof}
The proof is the same as for $\omega=1$ (Proposition \ref{intersection and join}),
using Lemma \ref{lemma:twisted-prime} in place of Lemma \ref{centralizer of S(K,H,B)}.
\end{proof}

We will need the following lemma in order to characterize isotropic subcategories.

\begin{lemma}
\label{lemma:noname prime}
Let $B: K \times H \to k^\times$ be a $G$-invariant $\omega$-bicharacter with
$K \subseteq H$ and $B(a,a)=1$, for all $a \in K \cap R$. Then
$B(k,k)=1$, for all $k \in K$.
\end{lemma}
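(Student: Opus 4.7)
\medskip

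My plan is to show that the function $k \mapsto B(k,k)$ is constant on $G$-conjugacy classes; combined with the hypothesis, this immediately forces $B(k,k)=1$ for all $k \in K$.

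First, since $K$ is normal, any $k \in K$ can be written as $k = x^{-1}ax$ for some $a \in K \cap R$ and $x \in G$, so it suffices to show $B(x^{-1}ax, x^{-1}ax) = B(a,a)$. The natural tool is $G$-invariance of $B$. Setting $h = x^{-1}ax$ in Definition~\ref{inv-bichar} gives
\[
B(x^{-1}ax, x^{-1}ax) = \frac{\beta_a(x, x^{-1}ax)\,\beta_a(ax, x^{-1})}{\beta_a(x, x^{-1})}\, B(a,a),
\]
and since $B(a,a)=1$, the task reduces to proving that the cocycle factor equals $1$.

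Next, I would massage the cocycle factor using the 2-cocycle identity (\ref{beta-reln}) for $\beta_a$. Applying it to the triple $(x, x^{-1}ax, x^{-1})$ rewrites the numerator as $\beta_a(x, x^{-1}a)\,\beta_c(c, x^{-1})$, where $c := x^{-1}ax$. Applying (\ref{beta-reln}) to the triple $(x, x^{-1}, a)$, together with the normalization $\beta_a(e, a) = 1$ (which follows from normalization of $\omega$), gives $\beta_a(x, x^{-1}) = \beta_a(x, x^{-1}a)\,\beta_c(x^{-1}, a)$. Combining these, the cocycle factor collapses to $\beta_c(c, x^{-1}) / \beta_c(x^{-1}, a)$. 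Two more applications of (\ref{beta-reln}) --- to the triples $(c, x^{-1}, x)$ and $(x^{-1}, x, cx^{-1})$ for $\beta_c$, and to $(x, cx^{-1}, x)$ for $\beta_a$ --- reduce the problem to the single identity
\[
\beta_a(a, x) = \beta_a(x, x^{-1}ax).
\]

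The main obstacle is the cocycle bookkeeping in this chain of reductions, but the final identity is a one-line verification directly from the definition (\ref{beta-defn}) and normalization of $\omega$: a straightforward substitution shows that both sides equal $\omega(a, x, x^{-1}ax)$. This completes the plan.
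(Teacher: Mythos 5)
Your proof is correct and follows the paper's strategy: write $k=x^{-1}ax$, apply $G$-invariance with $h=x^{-1}ax$ (valid since $K$ is normal and $K\subseteq H$), and show the resulting factor $\beta_a(x,x^{-1}ax)\beta_a(ax,x^{-1})/\beta_a(x,x^{-1})$ equals $1$. The only difference is in the final bookkeeping: the paper verifies this factor directly from the definition \eqref{beta-defn} and the $3$-cocycle condition applied to the quadruples $(a,x,x^{-1}ax,x^{-1})$ and $(a,x,x^{-1},a)$, whereas you work at the level of the $2$-cocycle relation \eqref{beta-reln} and reduce to the (true) identity $\beta_a(a,x)=\beta_a(x,x^{-1}ax)=\omega(a,x,x^{-1}ax)$; both computations check out.
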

\begin{proof}
Let $k \in K$ and write $k=x^{-1}ax, \, x \in G, \, a \in K \cap R$. 
By $G$-invariance of $B$ 
and the equality $B(a,a)=1$ we obtain
$$ B(x^{-1}ax, x^{-1}ax) = \frac{\beta_a(x,x^{-1}ax) \beta_a(ax, x^{-1})}
{\beta_a(x,x^{-1})}. $$
Applying the definition \eqref{beta-defn} of $\beta_a$ and the $3$-cocycle
condition \eqref{3-cocycle condition} for $\omega$ to the quadruples
$(a,x,x^{-1}ax,x^{-1})$ and $(a,x,x^{-1},a)$, the right side
of the above equality is equal to $1$.
\end{proof}

We next characterize symmetric, isotropic and Lagrangian subcategories.
Definitions were given in Section \ref{sec:lattices}.

\begin{proposition}
The fusion subcategory $\S(K,H,B) \subseteq \Rep(D^{\omega}(G))$ is
\begin{enumerate}
\item[(i)] symmetric if and only if $K \subseteq H$ and
$B(k_1, k_2) B(k_2, k_1) =1$, for all $k_1, k_2 \in K$,
\item[(ii)] isotropic if and only if $K \subseteq H$ and
$B|_{K \times K}$ is alternating,
\item[(iii)] Lagrangian if and only $K=H$ and $B$ is alternating.
\end{enumerate}
\end{proposition}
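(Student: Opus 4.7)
My plan is to deduce each of the three characterizations from the structural results already established: the centralizer formula in Lemma \ref{lemma:twisted-prime}, the inclusion criterion in Proposition \ref{prop:subcat prime}, the dimension count in Lemma \ref{lem-dimS}, and the bijection in Theorem \ref{theorem:twisted-main}. Throughout I will use the twist formula $\theta(a,\chi) = \chi(a)/\deg\chi$ for simple objects of $\Rep(D^{\omega}(G))$, which follows from the fact that for $g\in C_G(a)$ one has $\beta_a(a,g) = \beta_a(g,a)$, so that a $\beta_a$-representation sends the central element $a$ to a scalar.

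For part (i), I will use that $\S(K,H,B)$ is symmetric exactly when $\S(K,H,B) \subseteq \S(K,H,B)'$. By Lemma \ref{lemma:twisted-prime}, $\S(K,H,B)' = \S(H,K,(B^{\op})^{-1})$, and Proposition \ref{prop:subcat prime} translates this inclusion directly into the two conditions $K \subseteq H$ and $B|_{K \times K} = (B^{\op})^{-1}|_{K \times K}$, the latter being precisely $B(k_1,k_2)B(k_2,k_1) = 1$. For part (ii), in the forward direction I will first note that isotropic implies symmetric, either by Deligne's theorem or directly from the identity $\theta_{X\otimes Y} = (\theta_X\otimes\theta_Y)\circ c_{Y,X}\circ c_{X,Y}$; applying (i) gives $K\subseteq H$. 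For any simple $(a,\chi) \in \S(K,H,B)\cap\Gamma$, the inclusion $a\in K\subseteq H$ combined with the defining property of $\S(K,H,B)$ yields $\chi(a) = B(a,a)\deg\chi$, so $\theta(a,\chi) = B(a,a)$; isotropy forces $B(a,a)=1$ for all $a\in K\cap R$, and Lemma \ref{lemma:noname prime} extends this to $B(k,k)=1$ for every $k\in K$. Conversely, assuming $K\subseteq H$ and $B|_{K\times K}$ alternating, the same identity $\theta(a,\chi) = B(a,a) = 1$ establishes isotropy.

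Part (iii) is a short synthesis. If $\S(K,H,B)$ is Lagrangian, then it is isotropic, so by (ii) we already have $K\subseteq H$ and $B|_{K\times K}$ alternating; moreover $\S(K,H,B) = \S(K,H,B)' = \S(H,K,(B^{\op})^{-1})$, and the bijection of Theorem \ref{theorem:twisted-main} forces the triples to match, giving $K = H$, so that $B$ itself is alternating on $K\times K$. Conversely, if $K=H$ and $B$ is alternating, then (ii) gives isotropy, while Lemma \ref{lem-dimS} yields $\dim \S(K,K,B) = |K|[G:K] = |G|$, so $(\dim\S)^2 = |G|^2 = \dim\C$, which is the characterization of Lagrangian subcategories.

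I expect the main obstacle to be the forward direction of (ii), where the passage from $B(a,a)=1$ on the conjugacy representatives to $B(k,k)=1$ on all of $K$ is not automatic in the $\omega$-twisted setting: the $G$-invariance of $B$ carries nontrivial $\beta_a$-cocycle factors, and this is precisely where Lemma \ref{lemma:noname prime} supplies the needed $3$-cocycle identity. Everything else is a clean assembly of the tools already developed in Section \ref{Sect 5}.
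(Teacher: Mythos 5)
Your proof is correct, and for parts (i) and (ii) it follows exactly the paper's route: symmetric is rephrased as $\S(K,H,B)\subseteq \S(K,H,B)'$ and decoded via Lemma \ref{lemma:twisted-prime} and Proposition \ref{prop:subcat prime}; for (ii) you read off $\theta(a,\chi)=B(a,a)$ from the defining condition of $\S(K,H,B)$ at $h=a$ and use Lemma \ref{lemma:noname prime} to pass from representatives $a\in K\cap R$ to all of $K$, which is precisely the extra ingredient the paper flags for the twisted case (your side remark justifying that $\theta(a,\chi)=\chi(a)/\deg\chi$ still makes sense for $\beta_a$-characters, via $\beta_a(a,g)=\beta_a(g,a)=\omega(a,g,a)$ for $g\in C_G(a)$, is a correct verification that the paper leaves implicit). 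The only genuine divergence is part (iii): the paper simply cites \cite{NN}, whereas you give a short self-contained argument, using (ii) together with the equality $\S(K,H,B)=\S(K,H,B)'=\S(H,K,(B^{\op})^{-1})$ and the bijection of Theorem \ref{theorem:twisted-main} (or just a comparison of supports) to force $K=H$ in the forward direction, and the dimension count $\dim\S(K,K,B)=|K|[G:K]=|G|$ from Lemma \ref{lem-dimS} together with the equivalence $\D'=\D \Leftrightarrow (\dim\D)^2=\dim\C$ for isotropic subcategories in the converse. This is a valid and arguably preferable alternative, since it keeps the proposition independent of the external reference at the cost of only a few lines.
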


\begin{proof}
The proof is the same as for $\omega=1$ (Proposition \ref{prop:sil}), 
using Lemma \ref{lemma:twisted-prime} and Proposition \ref{prop:subcat prime}
in place of Lemma \ref{centralizer of S(K,H,B)} and Proposition \ref{prop:subcat}.
We note that in part (ii) we also use Lemma \ref{lemma:noname prime}.
\end{proof}

\begin{proposition}\label{prop:z2}
\label{prop:Mueger center of S(K,H,B) twisted}
$\Z_2(\S(K,H,B))=
\S(\ker \varphi_{B,(B^{\op})^{-1}}, HK, \psi_{B,(B^{\op})^{-1}})$.
\end{proposition}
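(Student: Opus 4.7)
The plan is to observe that this proposition follows immediately from two previously established results, exactly as in the untwisted case (Proposition~\ref{prop:Mueger center of S(K,H,B)}). By definition, the M\"uger center $\Z_2(\S(K,H,B))$ is the centralizer of $\S(K,H,B)$ inside itself, which may be rewritten as the intersection $\S(K,H,B) \cap \S(K,H,B)'$ inside $\C = \Rep(D^{\omega}(G))$.

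First I would invoke Lemma~\ref{lemma:twisted-prime} to identify the centralizer $\S(K,H,B)' = \S(H,K,(B^{\op})^{-1})$. This reduces the question to computing the intersection
\[
\S(K,H,B) \cap \S(H,K,(B^{\op})^{-1}).
\]
Next I would apply Proposition~\ref{prop:intersection and join}(i) with $(K',H',B') = (H,K,(B^{\op})^{-1})$. The relevant data are: $K \cap K' = K \cap H$, $H \cap H' = H \cap K$, so the homomorphism $\varphi_{B,(B^{\op})^{-1}} : K \cap H \to \widehat{K \cap H}$ is precisely what appears in the statement, and $HH' = HK$. The pairing $\psi_{B,(B^{\op})^{-1}}$ on $\ker\varphi_{B,(B^{\op})^{-1}} \times HK$ matches the one in the statement by definition.

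There is no real obstacle here, since both Lemma~\ref{lemma:twisted-prime} and Proposition~\ref{prop:intersection and join}(i) have already been proved. The only thing to note is that the well-definedness of $\psi_{B,(B^{\op})^{-1}}$ and its status as a $G$-invariant $\omega$-bicharacter on $\ker\varphi_{B,(B^{\op})^{-1}} \times HK$ have already been verified in the build-up to Proposition~\ref{prop:intersection and join}, so the subcategory on the right-hand side of the claimed equality is well-defined. Chaining the two equalities gives
\[
\Z_2(\S(K,H,B)) = \S(K,H,B) \cap \S(H,K,(B^{\op})^{-1}) = \S(\ker\varphi_{B,(B^{\op})^{-1}}, HK, \psi_{B,(B^{\op})^{-1}}),
\]
which completes the proof.
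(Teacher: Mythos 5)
Your proof is correct and follows exactly the paper's own argument: write $\Z_2(\S(K,H,B)) = \S(K,H,B) \cap \S(K,H,B)'$, identify the centralizer via Lemma~\ref{lemma:twisted-prime}, and compute the intersection via Proposition~\ref{prop:intersection and join}(i), just as in the untwisted case of Proposition~\ref{prop:Mueger center of S(K,H,B)}. No gaps to report.
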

\begin{proof}
The proof is the same  as for $\omega=1$ (Proposition \ref{prop:Mueger center of S(K,H,B)}),
using Lemma \ref{lemma:twisted-prime} and Proposition \ref{prop:intersection and join}(i)
in place of Lemma \ref{centralizer of S(K,H,B)} and Proposition \ref{intersection and join}(i).
\end{proof}

Note that if $B$ is an $\omega$-bicharacter on $K\times H$, then $B^{\op}$
is an $\omega^{-1}$-bicharacter on $H\times K$.
Consequently, $BB^{\op}$ is a (symmetric) bicharacter on 
$(K\cap H)\times (K\cap H)$.

\begin{proposition}\label{thm:nondegenerate}
\begin{enumerate}
\item[(i)] The fusion subcategory ${\mathcal S}(K,H,B) \subseteq \Rep(D^{\omega}(G))$
is nondegenerate if and only if $HK=G$ and the symmetric bicharacter
$BB^{\op}|_{(K\cap H)\times (K\cap H)}$ is nondegenerate.
\item[(ii)] $\Rep(D^{\omega}(G))$ is prime if and only if there is no triple
$(K,H,B)$, where 
$K$ and $H$ are normal subgroups of $G$
that centralize each other, 
$(G,\{e\})\neq (K,H)\neq (\{e\},G)$,
$HK=G$, and $B$ is a $G$-invariant $\omega$-bicharacter 
on $K\times H$ such that $BB^{\op}|_{(K\cap H)\times (K\cap H)}$
is nondegenerate.
\end{enumerate}
\end{proposition}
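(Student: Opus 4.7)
The plan is to follow the strategy of the untwisted case (Proposition \ref{S(K,H,B) is nondegenerate iff}) essentially line for line, with Proposition \ref{prop:z2} and Theorem \ref{theorem:twisted-main} substituted for their untwisted counterparts. The starting observation, via \cite[Corollary 2.16]{M1}, is that $\S(K,H,B)$ is nondegenerate if and only if its M\"uger center $\Z_2(\S(K,H,B))$ equals $\Vec$, and that in our parametrization $\Vec = \S(\{e\},G,1)$.

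For part (i), by Proposition \ref{prop:z2} we have $\Z_2(\S(K,H,B)) = \S(\ker\varphi_{B,(B^{\op})^{-1}},\, HK,\, \psi_{B,(B^{\op})^{-1}})$, so the bijection in Theorem \ref{theorem:twisted-main} reduces nondegeneracy to three simultaneous conditions: $\ker\varphi_{B,(B^{\op})^{-1}} = \{e\}$, $HK = G$, and $\psi_{B,(B^{\op})^{-1}} \equiv 1$. Unpacking the definition, $\varphi_{B,(B^{\op})^{-1}}$ sends $a \in K\cap H$ to the character $b \mapsto B(a,b)B^{\op}(a,b)$ on $K\cap H$, so triviality of its kernel is equivalent to nondegeneracy of the symmetric bicharacter $BB^{\op}|_{(K\cap H)\times(K\cap H)}$. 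The third condition is automatic once the first holds: when $\ker\varphi = \{e\}$, $\psi$ is defined on $\{e\} \times G$, and a short check using normalization of $\omega$ (which gives $\beta_e \equiv 1$) together with the $\omega$-bicharacter identity $B(e\cdot e,h) = \beta_h(e,e)B(e,h)^2$ (which forces $B(e,h) = 1$) shows that $\psi$ is identically $1$ on $\{e\}\times G$.

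For part (ii), the two extreme triples $(G,\{e\},1)$ and $(\{e\},G,1)$ correspond respectively to the full category $\Rep(D^\omega(G))$ and to $\Vec = \S(\{e\},G,1)$. By the bijection of Theorem \ref{theorem:twisted-main}, the proper nontrivial fusion subcategories correspond exactly to the remaining triples, i.e.\ those with $(K,H)\neq (G,\{e\})$ and $(K,H)\neq (\{e\},G)$. Primality of $\Rep(D^\omega(G))$ means that none of these is nondegenerate, which by part (i) translates directly into the stated condition.

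The only mildly delicate step is verifying the automatic triviality of $\psi_{B,(B^{\op})^{-1}}$ on $\{e\}\times G$; this is where the normalization of $\omega$ and the values of $\omega$-bicharacters at the identity are genuinely used. All other steps are formal bookkeeping with the bijection, and the argument parallels the untwisted case.
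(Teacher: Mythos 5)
Your argument is correct and takes essentially the same route as the paper, whose proof simply reuses the untwisted argument of Proposition \ref{S(K,H,B) is nondegenerate iff} with Theorem \ref{theorem:twisted-main} and Proposition \ref{prop:z2} substituted for Theorem \ref{bijection} and Proposition \ref{prop:Mueger center of S(K,H,B)}. Your explicit check that $\psi_{B,(B^{\op})^{-1}}$ is automatically trivial once $\ker\varphi_{B,(B^{\op})^{-1}}=\{e\}$ (using $\beta_e\equiv 1$ and the forced values $B(e,h)=1$, and likewise $B(k,e)=1$ for the $(B^{\op})^{-1}$ factor) is precisely the point the paper leaves implicit.
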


\begin{proof}
The proof is the same as for $\omega=1$ (Proposition \ref{S(K,H,B) is nondegenerate iff}),
using Theorem \ref{theorem:twisted-main} and Proposition \ref{prop:z2} in
place of Theorem \ref{bijection} and Proposition \ref{prop:Mueger center of S(K,H,B)}.
\end{proof}

\end{subsection}

\begin{subsection}{The Gauss sum and central charge}

The definitions of Gauss sum and central charge of a premodular
category were recalled in Section \ref{gscc}.

\begin{proposition}
The Gauss sum of the fusion subcategory $\S(K,H,B)$ of $\Rep(D^{\omega}(G))$
is 
$$
\tau(\S(K,H,B)) = \frac{|G|}{|H|} 
\sum_{a \in K \cap H \cap R} |K_a| B(a, a).
$$ 
When $\S(K,H,B)$ is nondegenerate its Gauss sum is
$$
\tau(\S(K,H,B)) = \frac{|K|}{|K \cap H|} 
\sum_{a \in K \cap H} B(a, a).
$$
and its central charge is
$$
\zeta(\S(K,H,B)) = \frac{1}{\sqrt{|K \cap H|}} 
\sum_{a \in K \cap H} B(a, a).
$$  
\end{proposition}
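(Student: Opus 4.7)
The approach is to follow the same pattern used to prove Proposition \ref{prop:gauss sum} in the untwisted case, replacing ordinary character theory by its projective counterpart throughout. Starting from the definition $\tau(\S(K,H,B))=\sum_{(a,\chi)\in\S(K,H,B)\cap\Gamma}\theta(a,\chi)\,d(a,\chi)^2$, I would expand $\theta(a,\chi)=\chi(a)/\deg\chi$ and $d(a,\chi)=|K_a|\deg\chi$, and then interchange the order of summation to obtain
\[
\tau(\S(K,H,B))=\sum_{a\in K\cap R}|K_a|^2\sum_{\chi:\,\chi|_H=B(a,\cdot)\deg\chi}(\deg\chi)\,\chi(a).
\]

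Next I would apply projective Clifford theory together with Frobenius reciprocity for projective characters (the same ingredients that underlie Lemma \ref{lem-KE}). Since $B(a,\cdot)$ is a $C_G(a)$-invariant $\beta_a|_{H\times H}$-projective character of $H$ of degree one, the irreducible $\beta_a$-characters $\chi$ of $C_G(a)$ satisfying $\chi|_H=(\deg\chi)B(a,\cdot)$ are precisely the irreducible constituents of $\Ind_H^{C_G(a)}B(a,\cdot)$, each occurring with multiplicity $\deg\chi$. This identification collapses the inner sum into the single value $(\Ind_H^{C_G(a)}B(a,\cdot))(a)$.

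The decisive step is the evaluation of this projective induced character at $a$ itself. Because every $x\in C_G(a)$ satisfies $x^{-1}ax=a$, the induced-character formula shows that $(\Ind_H^{C_G(a)}B(a,\cdot))(a)$ vanishes whenever $a\notin H$ and equals $\frac{|C_G(a)|}{|H|}B(a,a)$ when $a\in H$. Substituting this back in and using $|K_a|\cdot|C_G(a)|=|G|$ produces the general formula for $\tau(\S(K,H,B))$. For the nondegenerate case, Proposition \ref{thm:nondegenerate} gives $HK=G$, which forces $K\cap H\subseteq Z(G)$: any $a\in K\cap H$ is centralized by both $K$ and $H$, hence by $HK=G$. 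Consequently $|K_a|=1$ for every $a\in K\cap H$, and the identity $|G|/|H|=|K|/|K\cap H|$ (a direct consequence of $HK=G$) rewrites $\tau$ in the stated form. The central charge formula then follows by dividing by $\sqrt{\dim(\S(K,H,B))}$; Lemma \ref{lem-dimS} combined with $HK=G$ gives $\sqrt{\dim(\S(K,H,B))}=\sqrt{|K|[G:H]}=|K|/\sqrt{|K\cap H|}$.

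The main technical obstacle I anticipate is verifying the projective induced-character evaluation at $a$: the formula for inducing a $\beta_a$-projective character from $H$ up to $C_G(a)$ carries $\beta_a$-cocycle correction factors, and one must check that these factors collapse to $1$ when the summation is restricted to those $x\in C_G(a)$ with $x^{-1}ax=a$. This reduces to $2$-cocycle identities for $\beta_a|_{C_G(a)\times C_G(a)}$ evaluated on elements commuting with $a$, of a type analogous to those used in proving Lemma \ref{lem-KE} and throughout Section \ref{Sect 5}.
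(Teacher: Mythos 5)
Your proposal is correct and follows essentially the same route as the paper, whose proof simply carries the untwisted computation (Proposition \ref{prop:gauss sum}) over verbatim, with projective Clifford theory and Frobenius reciprocity as in Lemma \ref{lem-KE} and with Proposition \ref{thm:nondegenerate} supplying $HK=G$ in the nondegenerate case. The cocycle correction factors you flag in evaluating the projective induced character at $a$ do indeed collapse: from the definition \eqref{beta-defn} one checks that $\beta_a(a,x)=\beta_a(x,a)=\omega(a,x,a)$ for all $x\in C_G(a)$ (so $\overline{a}$ is central in $k^{\beta_a}[C_G(a)]$), and hence the evaluation proceeds exactly as in the $\omega=1$ case.
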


\begin{proof}
The proof is the same as for $\omega=1$ (Proposition \ref{prop:gauss sum}),
using Proposition \ref{thm:nondegenerate} in place of Proposition 
\ref{S(K,H,B) is nondegenerate iff}.
\end{proof}

\end{subsection}

\end{section}

\begin{section}
{A characterization of group-theoretical braided fusion categories}

Let us  recall notions of equivariantization and de-equivariantization
of fusion categories from \cite{Br, M2, Ki}.

Let $\C$ be a fusion category with an action of a finite 
group $G$. In this case one can define the fusion category $\C^G$ of 
$G$-equivariant objects in $\C$. 
An object of this category is an object $X$ of $\C$ equipped with 
an isomorphism $u_g: g(X)\to X$ for all $g\in G$, such that
$$
u_{gh}\circ \gamma_{g,h}=u_g\circ g(u_h),
$$
where $\gamma_{g,h}: g(h(X))\to gh(X)$ is the natural isomorphism
associated to the action. Morphisms and tensor
product of equivariant objects 
are defined in an obvious way. This category is called
the {\em $G$-equivariantization} of $\C$. 

There is a procedure opposite to equivariantization, called {\em de-equivariantization}.
In the context of modular categories it was introduced as a {\em modularization}
by Brugui\`{e}res and M\"uger.
Namely, let $\C$ be a fusion category  and let
$\E= \Rep(G) \subset \Z(\C)$  be a Tannakian subcategory that embeds into $\C$
via the forgetful functor $\Z(\C)\to \C$. Let $A=\mbox{Fun}(G)$ be the algebra of functions on
$G$. It is a commutative algebra in $Z(\C)$ and so
the category $\C_G$ of left $A$-modules in $\C$ is a
fusion category, called the {\em de-equivariantization} of $\C$ by $\E$. 
The  free module functor $\C \to \C_G: X \mapsto A\ot X$ is
a surjective tensor functor. 

The above constructions are inverse to each other. In particular,
$\C_G$ admits a canonical action of $G$ such that  there is
a canonical equivalence $(\C_G)^G \cong \C$. 

\begin{remark}
\label{used in proof}
The following consequence of the above constructions will be used in the proof
of Theorem~\ref{interpretation} below.  Given a braided fusion category $\D$
and a Tannakian subcategory $\Rep(G) \subset \D$ we have a commutative algebra 
$A=\mbox{Fun}(G)$ in $\D$. The category $\D_G$ of $A$-modules in $\D$ is a fusion category
with an action of $G$ and $\D$ is equivalent to the $G$-equivariantization of $\D_G$.
\end{remark}

The next theorem characterizes group-theoretical braided fusion
categories as equivariantizations of pointed fusion categories.

\begin{theorem}
\label{interpretation}
Let $\D$ be a braided fusion category. Then $\D$ is group-theoretical if and only if
it contains a Tannakian subcategory $\E=\Rep(G)$ such that the corresponding 
de-equivariantization $\D_G$ is pointed. 
Equivalently, $\D$ is group-theoretical
if and only if it is an equivariantization of a pointed fusion category.
\end{theorem}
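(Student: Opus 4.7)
The two biconditionals in the theorem are equivalent by Remark~\ref{used in proof}, which records that equivariantization and de-equivariantization are mutually inverse constructions: a containment $\Rep(G) \subseteq \D$ with $\D_G$ pointed yields $\D \cong (\D_G)^G$, an equivariantization of a pointed category, and conversely any $\C^G$ contains a canonical Tannakian subcategory $\Rep(G)$ whose de-equivariantization is $\C$. So it suffices to establish that $\D$ is braided group-theoretical if and only if it is an equivariantization of a pointed fusion category.

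For the forward direction, suppose $\D$ is braided and group-theoretical. By Remark~\ref{remark:braided}, $\D$ is equivalent to a fusion subcategory $\S(K,H,B) \subseteq \Rep(D^{\omega}(G))$ for some finite group $G$, 3-cocycle $\omega$, pair of normal subgroups $K,H$ centralizing each other, and $G$-invariant $\omega$-bicharacter $B$. Inside $\S(K,H,B)$ I would identify the Tannakian subcategory $\Rep(G/H) = \S(\{e\}, H, 1)$; its containment is immediate from Proposition~\ref{prop:subcat prime} (the conditions $\{e\} \subseteq K$, $H \subseteq H$, and trivial agreement of bicharacters on $\{e\} \times H$ are all satisfied). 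I would then argue that $\S(K,H,B)_{G/H}$ is pointed of dimension $|K|[G:H]/[G:H] = |K|$. Using the bundle description of Remark~\ref{S as bundles}, $\S(K,H,B)$ is the category of $G$-equivariant vector bundles on $K$ whose $H$-action on the fiber at $a$ is the scalar $B(a,\cdot)$; de-equivariantizing by $\Rep(G/H)$ strips off precisely the residual $G/H$-equivariance, leaving $K$-graded vector spaces with a twisted associativity constraint, i.e.\ a pointed fusion category $\Vec_K^{\tau}$ for a suitable 3-cocycle $\tau$ determined by $(G, \omega, H, B)$.

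For the reverse direction, let $\C$ be pointed with a $G$-action, so that $\C \simeq \Vec_L^{\sigma}$ for some finite group $L$ and 3-cocycle $\sigma$, and set $\D = \C^G$. I would show $\D$ is group-theoretical by exhibiting it as Morita equivalent to the crossed-product fusion category $\C \rtimes G$, which for pointed $\C$ is itself a pointed fusion category of the form $\Vec_{L \rtimes G}^{\widetilde{\omega}}$ for a 3-cocycle $\widetilde{\omega}$ built from $\sigma$ and the action of $G$ on $L$. Since being group-theoretical is a Morita-equivalence invariant, $\D = \C^G$ is group-theoretical. The chief technical obstacle is the pointedness assertion in the forward direction; making the bundle-theoretic picture rigorous requires a careful analysis of how the commutative algebra $\mathrm{Fun}(G/H) \in \Rep(G/H) \subseteq \S(K,H,B)$ acts on the simple objects $(a,\chi)$, or equivalently an explicit Clifford-theoretic computation (via Lemma~\ref{lem-KE}) showing that for each $a \in K \cap R$ the family of $\beta_a$-characters $\chi$ appearing in $\S(K,H,B)$ collapses to a single invertible object in the de-equivariantization.
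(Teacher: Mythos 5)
Your outline coincides with the paper's: both directions are reduced, via Remark~\ref{used in proof}, to the equivalence of the two formulations; for the direction ``group-theoretical $\Rightarrow$ pointed de-equivariantization'' you realize $\D$ as $\S(K,H,B)\subseteq\Rep(D^{\omega}(G))$ (Remark~\ref{remark:braided}, Theorem~\ref{theorem:twisted-main}), single out the Tannakian subcategory $\Rep(G/H)$, and aim to identify $\D_{G/H}$ with a pointed category supported on $K$. For the other direction the paper simply cites \cite[Theorem 3.5]{Nk}, whereas you sketch the standard proof of that result (Morita equivalence of $\C^{G}$ with the crossed product $\C\rtimes G$, which is pointed with group of invertibles $L\rtimes G$ when $\C\simeq\Vec_L^{\sigma}$); that substitution is legitimate and changes nothing essential.

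The genuine gap is exactly the step you yourself flag as the ``chief technical obstacle'': the pointedness of $\S(K,H,B)_{G/H}$. The dimension count $|K|$ is necessary but far from sufficient, and the heuristic that de-equivariantization ``strips off the residual $G/H$-equivariance'' is not an argument — one must actually compute the category of $\Fun(G/H)$-modules in $\S(K,H,B)$ together with its tensor structure. The paper closes this point functorially: with $\tilde\omega=\omega|_K$, let $F:\D\to\Vec_K^{\tilde\omega}$ be the restriction to $\D=\S(K,H,B)$ of the forgetful functor $\Rep(D^{\omega}(G))\cong\Z(\Vec_G^{\omega})\to\Vec_G^{\omega}$ and $I$ its left adjoint; then $A:=I(\be)=\Fun(G/H)$ is the relevant commutative algebra, each $I(X)$ is canonically an $A$-module, and $X\mapsto I(X)$ is a tensor equivalence of $\Vec_K^{\tilde\omega}$ with the category of $A$-modules in $\D$ (for which the paper invokes the proof of \cite[Theorem 4.8]{DGNO}), after which Remark~\ref{used in proof} finishes the argument. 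Your proposed alternative — a Clifford-theoretic analysis via Lemma~\ref{lem-KE} showing that for each $a\in K\cap R$ the simples $(a,\chi)$ collapse to a single invertible $A$-module — is a plausible route, but it is left entirely unexecuted, and it would still not by itself identify the associativity constraint; note the paper shows the resulting 3-cocycle is just $\omega|_K$, independent of $B$ and $H$, whereas your ``$\tau$ determined by $(G,\omega,H,B)$'' leaves this open. As written, the central claim of the theorem is asserted rather than proved.
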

\begin{proof}
It follows from \cite[Theorem 3.5]{Nk} that an equivariantization of a pointed
fusion category is group-theoretical. Hence, if $\D_G$ is pointed then 
$\D \cong (\D_G)^G$ is group-theoretical.

Conversely, suppose $\D$ is group-theoretical. Then by Remark \ref{remark:braided}
and Theorem \ref{theorem:twisted-main}, there exist a group $G$,
a $3$-cocycle~$\omega$ on it, normal subgroups $K, H \subset G$, and 
$B: K \times H \to k^\times$
such that $\D =\mathcal{S}(K,\, H,\, B) \subset \Rep(D^\omega(G))$. 
The category $\D$ contains a Tannakian subcategory $\Rep(G/H)$ and 
is identified with a certain category of equivariant vector bundles on $K$.
Let $\tilde{\omega}$ denote the restriction of $\omega$ to $K$, let
$F: \D \to \Vec_K^{\tilde\omega}$ be the restriction of
the forgetful functor $\Rep(D^\omega(G)) \cong \Z(\Vec_G^\omega) \to \Vec_G^\omega$,
and let $I~:~\Vec_K^{\tilde\omega} \to \D$ be the left adjoint to $F$.
Then $A:= I(\be) = \mbox{Fun}(G/H)$ is a commutative algebra in $\Rep(G/H)\subset \D$.
Furthermore, for any $X\in \Vec_K^{\tilde\omega}$ the object $I(X)$ has a structure
of an $A$-module (see \cite[proof of Theorem 4.8]{DGNO} for details)  and the functor $X\mapsto I(X)$ 
is a tensor equivalence between $\Vec_K^{\tilde\omega}$ and the category of $A$-modules 
in $\D$. By Remark~\ref{used in proof}, $\D$ is equivalent to a $G/H$-equivariantization 
of the pointed category $\Vec_K^{\tilde\omega}$, so the proof is complete.
\end{proof}

\begin{remark}
\begin{enumerate}
\item[(i)] A different proof of Theorem~\ref{interpretation} will be given in an updated version of \cite{DGNO}.
The proof presented here uses the explicit description of group-theoretical
categories as subcategories of twisted group doubles.
\item[(ii)] By the work of Kirillov Jr.\ \cite{Ki} and M\"uger \cite{M2} the above de-equivari\-antization 
of $\D$ has a structure of a {\em braided $G$-crossed category} in the sense of Turaev \cite{T}.
By definition, a {\em braided $G$-crossed fusion category} is a fusion category $\C$
equipped with  an action $g\mapsto T_g$ of a group $G$ by tensor autoequivalences of  $\C$,
a (not necessarily faithful) grading $\C=\oplus_{g\in G}\,\C_g$,
and  a natural collection of isomorphisms 
\begin{equation*}
c_{X,Y}: X\ot Y\simeq T_g(Y)\ot X, \qquad   X\in \C_g,\, g\in G\quad \mbox{ and } \quad  Y\in \C
\end{equation*}
satisfying certain  compatibility conditions.
\item[(iii)] Recall that
a {\em crossed module} is a pair of groups  $(G,\, X)$ with $G$ acting on $X$ by automorphisms,
denoted $(g, x) \mapsto \lexp{g}{x}$, and a group homomorphism $\partial : X \to G$ 
satisfying
$$ 
\lexp{\partial(x)}{x'} = xx'x^{-1}, \qquad \text{for all }
x,x' \in X
$$
and
$$ 
\partial(\lexp{g}{x}) = g \partial(x) g^{-1}, \qquad \text{for all }
g \in G, x \in X.
$$
Let $\C$ be a pointed $G$-crossed braided category  and let $X$ be the group
of isomorphism classes of simple objects of $G$. Then $(G,\, X)$
is crossed module.  By Theorem \ref{interpretation} every group-theoretical braided 
fusion category gives rise to a crossed module. 

A construction of braided fusion categories from crossed modules was given by Bantay 
in \cite{Ba}.  But a general problem of classifying pointed $G$-crossed categories
in terms of group cohomology seems to be quite complicated, see \cite[4.9]{M4}.
\end{enumerate}
\end{remark}

\end{section}


\end{document}